\newtheorem{thm}{Theorem}
\newtheorem{lemma}[thm]{Lemma}
\newtheorem{claim}[thm]{Claim}
\newtheorem{conj}[thm]{Conjecture}
\newtheorem{defn}[thm]{Definition}
{}
\newtheorem{theorem}[thm]{Theorem}
\newtheorem*{example*}{Example}
\newtheorem{remark}[thm]{Remark}
\newtheorem*{definition*}{Definition}
\newtheorem*{remark*}{Remark}
\newcommand*{\myproofname}{Proof}
\def\qed{\hfill\ifhmode\unskip\nobreak\fi\qquad\ifmmode\Box\else\hfill$\Box$\fi}
\title{Long monochromatic paths and cycles in $2$-edge-colored multipartite graphs}
\date{\today}
\author{
J\' ozsef Balogh~\thanks{Department of Mathematics, University of Illinois at Urbana--Champaign, IL, 
USA, and Moscow Institute of Physics and Technology, Dolgoprodny,  Russian Federation, 
jobal@illinois.edu. 
Research of this author is partially supported by  NSF Grants DMS-1500121, DMS-1764123, Arnold O. Beckman Research Award (UIUC) Campus Research Board 18132 and the Langan Scholar Fund (UIUC).}
\and Alexandr Kostochka \thanks{Department of Mathematics, University of Illinois at Urbana--Champaign, IL, USA and
Sobolev Institute of Mathematics, Novosibirsk 630090, Russia, kostochk@math.uiuc.edu. Research of this author is supported in part by NSF grant
 DMS-1600592, by Award RB17164 of the UIUC Campus Research Board, and by grants 18-01-00353  and 19-01-00682 of the Russian Foundation for Basic Research.}
  \and Mikhail Lavrov\thanks{Department of Mathematics, University of Illinois at Urbana--Champaign, IL, USA,  mlavrov@illinois.edu.}
 \and Xujun Liu\thanks{Department of Mathematics, University of Illinois at Urbana--Champaign, IL, USA,  xliu150@illinois.edu.
 Research of this author
is supported in part by Award RB17164 of the Research Board of the University of Illinois at Urbana-Champaign.
}
 }
\begin{document}
	\maketitle

\begin{abstract}
We solve four similar problems: For  every fixed $s$ and large $n$, we describe all values of $n_1,\ldots,n_s$ such that  for every 
$2$-edge-coloring of the
complete $s$-partite graph $K_{n_1,\ldots,n_s}$ there exists a monochromatic (i) cycle $C_{2n}$ with $2n$ vertices,
(ii) cycle $C_{\geq 2n}$ with at least $2n$ vertices,
(iii) path $P_{2n}$ with $2n$ vertices, and (iv) path $P_{2n+1}$ with $2n+1$ vertices.

This implies a generalization for large $n$ of the conjecture by Gy\' arf\' as,  Ruszink\' o,   S\' ark\H ozy and  Szemer\' edi that for every 
$2$-edge-coloring of the
complete $3$-partite graph $K_{n,n,n}$ there is a monochromatic path $P_{2n+1}$. An important tool is our recent stability theorem on monochromatic connected matchings.
\\
\\
 {\small{\em Mathematics Subject Classification}: 05C15, 05C35, 05C38.}\\
 {\small{\em Key words and phrases}:  Ramsey number,  Regularity Lemma, paths and cycles.}
\end{abstract}

\section{Introduction}
A {\em connected matching} in a graph $G$ is a matching all whose edges are in the same component of $G$.
By $M_n$ we will always denote a connected matching with $n$ edges and by $P_n$ -- the path with $n$ vertices.
Also by $C_n$ we denote the  cycle with $n$ vertices, and by $C_{\geq n}$ -- a cycle of length at least $n$.

For graphs $G_0,\ldots,G_k$ we write $G_0 \mapsto (G_1,\ldots,G_k)$ if for every $k$-coloring of the edges of $G_0$, for some $i\in [k]$ there is a copy of $G_i$ with all edges of color $i$.	The {\em Ramsey number} $R(G_1,\ldots,G_k)$ is the minimum $N$ such that $K_N\mapsto (G_1,\ldots, G_k)$,  and $R_k(G)=R(G_1,\ldots,G_k)$ where $G_1=\ldots=G_k=G$.

Gerencs\' er and Gy\' arf\' as~\cite{GG1} proved in 1967 that the $n$-vertex path $P_n$ satisfies $R_2(P_n)=\left\lfloor \frac{3n-2}{2}\right\rfloor$.
They actually proved a stronger result:

\begin{theorem}[\cite{GG1}]\label{ntt1} For any two positive integers $k\geq \ell$,  $R(P_{k},P_{\ell})=k-1+\left\lfloor \frac{\ell}{2}\right\rfloor$.
\end{theorem}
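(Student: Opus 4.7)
My plan is to prove the lower bound $R(P_k,P_\ell)\geq k-1+\lfloor\ell/2\rfloor$ by an explicit construction and the matching upper bound by induction on $\ell$.

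For the lower bound, partition $V(K_{k-2+\lfloor\ell/2\rfloor})$ into $A\sqcup B$ with $|A|=k-1$ and $|B|=\lfloor\ell/2\rfloor-1$, color all edges inside $A$ red, and color every remaining edge (inside $B$ or between $A$ and $B$) blue. The red graph is the clique $K_{k-1}$, so its longest path has only $k-1$ vertices. A blue path cannot contain two consecutive vertices of $A$, since those edges are red, so at most $|B|+1$ of its vertices lie in $A$; its total length is at most $2|B|+1=2\lfloor\ell/2\rfloor-1<\ell$.

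For the upper bound, I would induct on $\ell$. The base case $\ell=2$ is immediate: in any 2-coloring of $K_k$, either some edge is blue (yielding $P_2$) or every edge is red (yielding the red Hamiltonian $P_k$). For the inductive step with $\ell\geq 3$, set $N=k-1+\lfloor\ell/2\rfloor$ and 2-color $K_N$. By the induction hypothesis, $R(P_k,P_{\ell-2})=N-1$, so removing any vertex $v$ produces either a red $P_k$ (and we are done) or a blue $P_{\ell-2}$. Assuming no red $P_k$ ever appears, I fix a blue path $Q=u_1u_2\cdots u_{\ell-2}$ in $K_N$ and plan to extend it by one blue edge at each endpoint to produce the desired blue $P_\ell$.

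The main obstacle is exactly this two-sided extension. If both endpoints $u_1$ and $u_{\ell-2}$ have distinct blue neighbors outside $V(Q)$ we finish immediately, so the hard case is when, say, every edge from $u_1$ to $V(K_N)\setminus V(Q)$ is red, giving $u_1$ at least $k+1-\lceil\ell/2\rceil$ red neighbors off $Q$. I would then invoke a P\'osa-style rotation on the blue path: whenever $u_1u_i$ is blue for some $3\leq i\leq\ell-2$, the rotated path $u_{i-1}u_{i-2}\cdots u_1u_iu_{i+1}\cdots u_{\ell-2}$ is another blue $P_{\ell-2}$ with new endpoint $u_{i-1}$, which we then attempt to extend outside $V(Q)$. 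Closing the argument requires showing that simultaneously blocking all such rotations forces enough red edges — either inside $V(K_N)\setminus V(Q)$ or joining that set to the interior of $Q$ — to assemble a red path on $k$ vertices, contradicting our standing assumption. Tracking the pool of achievable endpoints under rotation and handling the parity of $\ell$ separately is where I expect the most delicate bookkeeping, and this is the main obstacle of the proof.
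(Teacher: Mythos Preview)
The paper does not prove this theorem; it is quoted as a classical result of Gerencs\'er and Gy\'arf\'as with a citation to~\cite{GG1} and no argument given, so there is nothing in the paper to compare your proposal against.

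On the merits of your proposal itself: the lower-bound construction is the standard one and is correct. Your upper-bound strategy---induct on $\ell$ in steps of two, obtain a blue $P_{\ell-2}$ from the induction hypothesis, and then try to extend at both ends via P\'osa rotation---is a reasonable line of attack, but as you yourself say, the rotation-extension step is ``the main obstacle of the proof,'' and you have not carried it out. That step is essentially the entire content of the upper bound; saying that blocking all rotations ``forces enough red edges \ldots\ to assemble a red path on $k$ vertices'' is exactly what must be proved, and the bookkeeping (which endpoints become reachable, how many red edges are forced into the complement, how to thread them into a single $P_k$) is genuinely nontrivial. One small additional remark: since your induction steps down by two, you also need the base case $\ell=1$ (trivial, since $P_1$ is a single vertex) to cover odd $\ell$. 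As written, the proposal is a plausible outline with the decisive argument missing.
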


Many significant results bounding $R_k(P_n)$ for  $k\geq 3$ and $R_k(C_n)$ for even $n$  were proved in~\cite{BLSSW,BS1, DK1,DGKRS1, FS1,FL1,FL2,GRSS1,
GRSS2,KS1,L1,LSS1,S1}.  Many proofs used the Szemer\' edi Regularity Lemma~\cite{Sz} and a number of  them used the idea of connected matchings in regular partitions due to Figaj and  \L uczak~\cite{FL1}. 

Ramsey-type problems when the host graphs are not complete but  complete bipartite were studied by 
Gy\' arf\' as and  Lehel~\cite{GL1}, Faudree and Schelp~\cite{FS2},
DeBiasio,  Gy\' arf\' as,  Krueger,  Ruszink\' o and   S\' ark\H ozy~\cite{GRSS1},
DeBiasio
and  Krueger~\cite{DK1} and Bucic,  Letzter and  Sudakov~\cite{BLS1,BLS2}, and when the host graphs are complete $3$-partite  ---
by Gy\' arf\' as,  Ruszink\' o,   S\' ark\H ozy and  Szemer\' edi~\cite{GRSS0}. The main result in~\cite{FS2} and~\cite{GL1} was

\begin{theorem}[\cite{FS2, GL1}]\label{t21} For every positive integer $n$,  $K_{n,n}\mapsto (P_{2\lceil n/2\rceil},P_{2\lceil n/2\rceil})$.
Furthermore,\\ $K_{n,n}\not\mapsto (P_{2\lceil n/2\rceil+1},P_{2\lceil n/2\rceil+1})$.
\end{theorem}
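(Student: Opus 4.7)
\emph{Lower bound (construction).} Write $m=\lceil n/2\rceil$ and $m'=\lfloor n/2\rfloor$. Partition the two sides of $K_{n,n}$ as $A=A_1\sqcup A_2$, $B=B_1\sqcup B_2$ with $|A_1|=|B_1|=m$ and $|A_2|=|B_2|=m'$, and color the edges between $A_1,B_1$ and between $A_2,B_2$ red, all others blue. The red graph is $K_{m,m}\sqcup K_{m',m'}$ and the blue graph is $K_{m,m'}\sqcup K_{m',m}$. Since the longest path in $K_{p,q}$ with $p\le q\le m$ has $2p+\mathbf{1}[p<q]\le 2m$ vertices, no monochromatic $P_{2m+1}$ can exist, which proves the non-arrowing half.

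\emph{Upper bound (arrowing).} I would argue by contradiction: suppose some $2$-edge-coloring of $K_{n,n}$ has no monochromatic $P_{2m}$, and let $P=x_1x_2\cdots x_p$ be a longest monochromatic path (WLOG red), so $p\le 2m-1$. Set $U=V(K_{n,n})\setminus V(P)$ and split $U=U_A\sqcup U_B$ by side. Maximality of $P$ forces every edge from $x_1$ to the side of $U$ opposite $x_1$ to be blue, and likewise at $x_p$. A standard rotation argument then shows that if $x_1 x_{i+1}$ is red and $x_{i+1}$ lies opposite $x_1$, then $x_i x_{i-1}\cdots x_1 x_{i+1}\cdots x_p$ is another longest red path ending at $x_i$ (on the same side as $x_1$), so all edges from $x_i$ to the opposite side of $U$ are also blue. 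Iterating rotations at both endpoints produces sets $S,T$ of ``rotated endpoints'' whose outside neighbors are all blue.

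From here I would assemble a blue $P_{2m}$: in the easier case where $S\cup T$ lies on a single side, the blue graph already contains $K_{|S\cup T|,\,|U_B|}$, which has a Hamilton path once $|S\cup T|$ and $|U_B|$ are both $\ge m$; otherwise I would interleave blue edges $S\to U_B$, paths inside $U$, and blue edges $U_A\to T$, applying further blue rotations inside $U$ as needed. The main obstacle is the parity/side bookkeeping: the endpoints of $P$ may lie on the same side (when $p$ is odd) or on opposite sides (when $p$ is even); rotations preserve the side of the rotated endpoint; and when $n$ is odd, the two sides of $U$ have different sizes, so the blue reservoir is asymmetric. Coordinating the two rotations and the routing of the resulting blue path so as to hit exactly $m$ vertices on each side, uniformly across the parities of $n$ and $p$, is where the most care will be needed.
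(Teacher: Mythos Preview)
The paper does not prove this theorem: it is quoted as a known result of Faudree--Schelp and Gy\'arf\'as--Lehel, with citations but no argument. So there is no ``paper's own proof'' to compare against.

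On the content of your proposal: the lower-bound construction is correct and is exactly the standard extremal coloring. For the upper bound, the rotation idea is indeed the classical approach underlying the original proofs, but what you have written is an outline with an explicitly acknowledged gap rather than a proof. Two concrete issues: first, you assert that in the ``easy'' case the blue side contains $K_{|S\cup T|,|U_B|}$ with both parts of size at least $m$, but you have not shown $|S\cup T|\ge m$; if $x_1$ happens to have no red chords to the opposite-side internal vertices of $P$, the rotation set $S$ could be just $\{x_1\}$. Second, when $n$ is odd and $p=2m-1=n$, one side of $U$ has only $m-1$ vertices, so the blue complete bipartite graph you point to cannot by itself yield $P_{2m}$; some vertices of $P$ must be reused, and you have not said how. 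The parity/side bookkeeping you flag as ``where the most care will be needed'' is in fact the entire substance of the argument, and the sketch as written does not supply it.
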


DeBiasio
and  Krueger~\cite{DK1} extended the result 
 from paths $P_{2\lceil n/2\rceil}$ to  cycles of length at least $2\lfloor n/2\rfloor$ for large $n$.

 The main result in~\cite{GRSS0} was

\begin{theorem}[\cite{GRSS0}]\label{tt1} For every positive integer $n$,  $K_{n,n,n}\mapsto (P_{2n-o(n)},P_{2n-o(n)})$.
\end{theorem}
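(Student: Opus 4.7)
The plan is to follow the now-standard three-step paradigm of Figaj--\L uczak~\cite{FL1} and Gy\' arf\' as--Ruszink\' o--S\' ark\H ozy--Szemer\' edi that reduces finding long monochromatic paths to finding large monochromatic connected matchings in a reduced graph. Given $\eta > 0$, apply the $2$-color Szemer\' edi Regularity Lemma to the given $2$-edge-coloring of $K_{n,n,n}$ with parts $V_1, V_2, V_3$, each partitioned into $m$ clusters of equal size $n/m$. Keeping only cluster pairs $(U,W)$ (between distinct $V_i$'s) that are $\varepsilon$-regular in both colors and have density at least $\varepsilon$, and coloring each such reduced edge by the majority color, we obtain a $2$-edge-colored $3$-partite graph $R$ on $3m$ vertices which is very close to being the complete tripartite graph $K_{m,m,m}$: all but an $O(\varepsilon)$ fraction of the possible reduced edges are present.

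The core combinatorial step is to prove that every $2$-edge-coloring of a near-complete tripartite graph on parts of size $m$ contains a monochromatic connected matching of size at least $n - o(n)$ (appropriately normalized in terms of $m$ and $n/m$). I would start from the observation that at least one color (say red) spans at least half of the $\approx 3m^2$ reduced edges. Then I would analyze the red components: a greedy/extremal argument shows that either red already contains a connected matching covering the target number of vertices across two of the three parts, or the red graph is very unbalanced, in which case blue inherits a dense bipartite-like structure between the deficient parts and one can locate a large blue connected matching instead. This case analysis, driven by convexity and by the tripartite structure, is the main obstacle: one has to rule out configurations in which both colors split into many small components, and for this it is crucial that the host is a \emph{complete} tripartite graph, so that every vertex sees $2m$ potential partners.

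Once a monochromatic, say red, connected matching $M$ of size $k \geq n - o(n)$ is located in $R$, the final step is the usual regularity-based embedding lemma: the edges of $M$ lie in a common red component of $R$, so iteratively we can traverse $M$ edge by edge, each time using the regularity and density of the corresponding pair of clusters together with a connecting path of bounded length inside the component to absorb almost all vertices of the clusters involved. Standard calculations (as in~\cite{FL1, GRSS0}) turn a connected matching of size $k$ in $R$ into a monochromatic path on $2k \cdot (n/m) - o(n) = 2n - o(n)$ vertices of the original $K_{n,n,n}$, which is precisely the conclusion of Theorem~\ref{tt1}.

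The main obstacle, as indicated, is the connected matching statement in the reduced $3$-partite graph: unlike the bipartite case of Theorem~\ref{t21}, where a parity argument pins down the extremal configurations exactly, the tripartite case requires genuinely balancing three parts and three possible pairwise ``bipartite'' color-density profiles. I would expect to spend most of the effort establishing that no $2$-coloring of the reduced tripartite graph can split both colors into components each of which is bounded away from covering two full parts; the remainder of the argument is the by-now routine regularity-to-path machinery.
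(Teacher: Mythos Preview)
This theorem is not proved in the present paper: it is quoted from~\cite{GRSS0} as background and motivation for Conjecture~\ref{cc1}. There is therefore no ``paper's own proof'' to compare against. Your sketch is, however, exactly the Figaj--\L uczak paradigm that~\cite{GRSS0} uses, and it is also the skeleton of the machinery the present paper deploys (Sections~\ref{regularity}--\ref{2bad}) to prove its sharper Theorems~\ref{tC2n}--\ref{tP2n+1}; in particular the paper's Theorem~\ref{t1} is precisely a stability version of the connected-matching step you identify as the main obstacle.
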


The following exact bound was also conjectured:

\begin{conj}[\cite{GRSS0}]\label{cc1} For every positive integer $n$, $K_{n,n,n}\mapsto (P_{2n+1},P_{2n+1})$.
\end{conj}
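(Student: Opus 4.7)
The plan is to prove the conjecture for all sufficiently large $n$ using the regularity method together with the stability theorem on monochromatic connected matchings mentioned in the abstract; the finite residual cases are presumably folklore or handled separately. First I would apply the Szemer\'edi Regularity Lemma to the $2$-edge-colored $K_{n,n,n}$, choosing a partition that refines the tripartition into, say, $3m$ clusters of equal size. For each pair of clusters $C_i,C_j$ from different parts whose pair is $\varepsilon$-regular with density above a threshold, I would insert an edge in a reduced graph $H$ colored by the majority color on that pair. The resulting $H$ is a $2$-edge-coloring of a spanning subgraph of $K_{m,m,m}$ that is missing only a tiny fraction of edges.

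Next, I would invoke the stability theorem for monochromatic connected matchings to analyze $H$. The expected dichotomy is: either $H$ contains a monochromatic connected matching $M$ covering at least $2m+1$ of its vertices (with a little room to spare for error terms), or the coloring of $H$ is $\delta$-close, in edit distance, to one of a short list of extremal colorings of $K_{m,m,m}$ for which no such matching exists. This dichotomy is the tripartite analogue of the Gerencs\'er--Gy\'arf\'as bound that drove Theorem~\ref{tt1}, and recovering the exact $2n+1$ bound (rather than $2n-o(n)$) is precisely what the stability theorem is designed to supply.

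In the generic (non-extremal) case I would lift $M$ back to $K_{n,n,n}$ by standard regularity machinery: each matched pair of clusters is $\varepsilon$-regular in the matching's color and can be turned into an almost-spanning monochromatic path through its two clusters via the Blow-Up Lemma or a greedy alternating argument; connectedness of $M$ lets us splice these sub-paths together through short connector paths in the reduced graph, losing only $O(\varepsilon n)$ vertices overall. Since $M$ covers at least $2m+1$ clusters and each cluster has $n/m$ vertices, a careful accounting yields a monochromatic path on at least $2n+1$ vertices.

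The main obstacle will be the extremal case. Here the coloring of $H$ is close to one of a small family of colorings of $K_{m,m,m}$ that just miss having a connected matching of the required size; I would classify these configurations, lift each to a near-partition description of the original $K_{n,n,n}$, and then carry out a direct, non-regularity combinatorial argument producing a monochromatic $P_{2n+1}$ in each case. The delicate point is that in some of these near-extremal colorings the longest monochromatic path in $K_{m,m,m}$ really does sit at exactly $2m$ vertices in the reduced graph, so the extra ``$+1$'' must be produced at the blow-up level by exploiting the vertices that the connected matching leaves uncovered inside the clusters---this is where the $+1$ in Conjecture~\ref{cc1} makes the analysis strictly harder than the bipartite case in Theorem~\ref{t21}.
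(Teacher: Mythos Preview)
Your high-level strategy---regularity plus a stability theorem on connected matchings, with a direct extremal analysis---is correct and is exactly the machinery the paper builds. The paper, however, routes the argument differently: it does \emph{not} attack $P_{2n+1}$ directly. Instead, all the regularity and stability work (Sections~\ref{setupM}--\ref{k2n2n-1}) goes into proving Theorem~\ref{tC2n}, that $K_{n_1,\dots,n_s}$ under the appropriate hypotheses contains a monochromatic $C_{2n}$. The $P_{2n+1}$ result (Theorem~\ref{tP2n+1}, Section~\ref{p2n+1}) is then derived from the cycle result by a short elementary argument: take the red $C_{2n}$, let $V''$ be the $n$ vertices outside it; either some edge from $V(C)$ to $V''$ is red (giving a red $P_{2n+1}$ immediately), or all such edges are blue, and then a direct construction in the complete bipartite blue graph $G[V(C),V'']$ produces a blue $P_{2n+1}$.

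This buys two things you do not get by attacking paths directly. First, the ``$+1$'' that worries you is handled for free: once a monochromatic $2n$-cycle exists, the extra vertex comes from the $n$ leftover vertices via a single edge, with no delicate blow-up accounting. Second, the extremal analysis only has to be done once, for cycles, and then recycled for $C_{\ge 2n}$, $P_{2n}$, and $P_{2n+1}$. Your plan would require redoing the bad-partition case analysis specifically for odd paths, which is feasible but substantially longer. A minor slip: a matching covers an even number of vertices, so ``$M$ covering at least $2m+1$ of its vertices'' should read ``a connected matching of size at least $(1+\gamma)m$''; in the non-extremal case this already yields a monochromatic path of length well above $2n+1$, so that branch is fine. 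Finally, note that neither your outline nor the paper handles small $n$; the paper only claims the conjecture for $n$ sufficiently large.
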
	

The goal of this paper is to prove for large $n$ Conjecture~\ref{cc1} and  similar exact bounds for paths $P_{2n}$ (parity matters here) and cycles $C_{2n}$. We do it in a more general 
setting: for multipartite graphs with possibly different part sizes. In the next section, we discuss extremal examples, define some notions and state our main results. In Section~3, we describe our tools. In Sections~\ref{setupM}--\ref{k2n2n-1}, we prove the main part, namely, the result for even cycles $C_{2n}$.
In Sections~\ref{cc2n},~\ref{p2n} and~\ref{p2n+1} we use the main result to derive similar results for cycles $C_{\geq 2n}$, and paths $P_{2n}$ and $P_{2n+1}$.

\section{Examples and results}

For a graph $G$ and disjoint sets $A,B\subset V(G)$, by $G[A]$ we denote the subgraph of $G$ induced by $A$, and by $G[A,B]$ -- the bipartite subgraph of $G$ with parts $A$ and $B$ formed by all edges of $G$ connecting $A$ with $B$.

Our edge-colorings always will be with red (color $1$) and blue (color $2$).

 We consider  necessary restrictions on $n_1\geq n_2\geq \ldots\geq n_s$ providing that each $2$-edge-coloring of
$K_{n_1, n_2, \ldots , n_s}$ contains  (a) a monochromatic   path $P_{2n}$, (b) a monochromatic path 
$P_{2n+1}$, (c) a monochromatic  cycle  $C_{2n}$ and (d) a monochromatic  cycle  $C_{\geq 2n}$. Each condition we add is motivated by an example showing that the condition is necessary.

First, recall that each of $P_{2n}, P_{2n+1}$, $C_{2n}$ and $C_{\geq 2n}$ contains a connected matching $M_n$. Thus a graph with no $M_n$ also contains neither 
$P_{2n}$ nor $ P_{2n+1}$ nor $C_{\geq2n}$.

\subsection{Example with no monochromatic  $M_n$: too few vertices}\label{example-1}
Let  $G=K_{3n-2}$. Clearly, $G\supseteq K_{n_1, n_2, \ldots , n_s}$ for each $n_1,\ldots,n_s$ with $n_1+\ldots+n_s= 3n-2$. Partition $V(G)$
into sets $U_1$ and $U_2$ with $|U_1|=2n-1$ and $|U_2|=n-1$. Color the edges of $G[U_1,U_2]$ with red and the rest of the edges with blue. Since neither $K_{2n-1}$ nor $K_{n-1,2n-1}$ contains $M_{n}$, we conclude $G\not\mapsto (M_{n},M_{n})$; see Figure~\ref{ex1}.

To rule out this example, we add the condition
\begin{equation}\label{jj1}
 N:=n_1+\ldots+n_s\geq 3n-1.
 \end{equation}

\subsection{Example with no monochromatic $M_n$: too few vertices outside $V_1$}\label{example-2}
 Choose any $n_1$ and let $N=n_1+2n-2$. Let  $G$ be obtained from $K_N$ by deleting the edges inside a vertex subset $U_1$ with $|U_1|=n_1$. Graph $G$ contains every $K_{n_1, n_2, \ldots , n_s}$ with $n_2+\ldots+n_s=2n-2$.
 Partition $V(G)-U_1$ into sets $U_2$ and $U_3$ with $|U_2|=|U_3|=n-1$. Color all edges incident with $U_2$ red, and the remaining edges of $G$ blue. Since  the red and blue subgraphs of $G$ have vertex covers of size $n-1$ (namely, $U_2$ and $U_3$), neither of them  contains $M_{n}$.  Thus
$G\not\mapsto (M_{n},M_{n})$; see Figure~\ref{ex2}.

To rule out this example, we add the condition
\begin{equation}\label{jj2}
 N-n_1=n_2+\ldots+n_s\geq 2n-1.
\end{equation}
 
\begin{figure}[ht]\label{f1}
\hspace{5mm}
\begin{minipage}[b]{0.4\textwidth}
\begin{tikzpicture}[scale=0.5, transform shape]

\draw  (-2.5,3.5) [black, fill=blue!10] ellipse (4 and 2);
\draw  (-2.5,-1) [black, fill=blue!10] ellipse (2 and 1);
\node [ shape=circle, minimum size=0.1cm,  fill = black!1000, align=center] (v1) at (-1.95,3.5) {};
\node [ shape=circle, minimum size=0.1cm,  fill = black!1000, align=center] (v2) at (-0.8,3.5) {};
\node [ shape=circle, minimum size=0.1cm,  fill = black!1000, align=center] (v3) at (0.35,3.5) {};
\node [ shape=circle, minimum size=0.1cm,  fill = black!1000, align=center] (v4) at (-3.2,3.5) {};
\node [ shape=circle, minimum size=0.1cm,  fill = black!1000, align=center] (v5) at (-4.35,3.5) {};
\node [ shape=circle, minimum size=0.1cm,  fill = black!1000, align=center] (v6) at (-5.5,3.5) {};
\node [ shape=circle, minimum size=0.1cm,  fill = black!1000, align=center] (v7)  at (-3.5,-1) {};
\node [ shape=circle, minimum size=0.1cm,  fill = black!1000, align=center] (v8)  at (-1.5,-1) {};
\node[ shape=circle, minimum size=0.1cm,  fill = black!1000, align=center] (v10)  at (-2.5,-1) {};
\draw [red, line width = 2]  (v6) edge (v7);
\draw [red, line width = 2] (v6) edge (v10);
\draw [red, line width = 2] (v6) edge (v8);
\draw[red, line width = 2]  (v5) edge (v7);
\draw [red, line width = 2] (v5) edge (v10);
\draw  [red, line width = 2](v5) edge (v8);
\draw [red, line width = 2] (v4) edge (v7);
\draw [red, line width = 2] (v4) edge (v10);
\draw [red, line width = 2] (v4) edge (v8);

\draw  [red, line width = 2](v1) edge (v7);
\draw  [red, line width = 2](v1) edge (v10);
\draw [red, line width = 2] (v1) edge (v8);
\draw [red, line width = 2] (v2) edge (v7);
\draw [red, line width = 2] (v2) edge (v10);
\draw [red, line width = 2] (v2) edge (v8);
\draw [red, line width = 2] (v3) edge (v7);
\draw [red, line width = 2] (v3) edge (v10);
\draw [red, line width = 2] (v3) edge (v8);

\node at (-8.5,3.5) {\LARGE{$|U_1|=2n-1$}};
\node at (-8.5,-1) {\LARGE{$|U_2|=n-1$}};

\end{tikzpicture}

\caption{Example~\ref{example-1}.}\label{ex1}
\end{minipage}
\begin{minipage}[b]{0.5\textwidth}
\begin{tikzpicture}[scale=0.5, transform shape]

\draw  (-2.5,3.5) ellipse (4 and 2);
\draw [black, fill=red!80, rotate = 45] (-6,2.5) ellipse (1 and 2);
\draw [black, fill=blue!10, rotate = -45] (2.5,-1) ellipse (1 and 2);

\node [ shape=circle, minimum size=0.1cm,  fill = black!1000, align=center] (v1) at (-1.9,3.5) {};
\node [ shape=circle, minimum size=0.1cm,  fill = black!1000, align=center] (v2) at (-0.7,3.5) {};
\node [ shape=circle, minimum size=0.1cm,  fill = black!1000, align=center] (v3) at (0.5,3.5) {};
\node [ shape=circle, minimum size=0.1cm,  fill = black!1000, align=center] (v4) at (-3.25,3.5) {};
\node [ shape=circle, minimum size=0.1cm,  fill = black!1000, align=center] (v5) at (-4.35,3.5) {};
\node [ shape=circle, minimum size=0.1cm,  fill = black!1000, align=center] (v6) at (-5.5,3.5) {};
\node [ shape=circle, minimum size=0.1cm,  fill = black!1000, align=center] (v7) at (-5,-3.5) {};
\node [ shape=circle, minimum size=0.1cm,  fill = black!1000, align=center] (v8) at (-7,-1.5) {};
\node[ shape=circle, minimum size=0.1cm,  fill = black!1000, align=center] (v10) at (-6,-2.5) {};

\node [ shape=circle, minimum size=0.1cm,  fill = black!1000, align=center] (v11) at (0,-3.5) {};
\node [ shape=circle, minimum size=0.1cm,  fill = black!1000, align=center] (v12) at (2,-1.5) {};
\node[ shape=circle, minimum size=0.1cm,  fill = black!1000, align=center] (v13) at (1,-2.5) {};

\draw [blue!30, line width = 2]  (v6) edge (v11);
\draw [blue!30, line width = 2] (v6) edge (v13);
\draw [blue!30, line width = 2] (v6) edge (v12);
\draw[blue!30, line width = 2]  (v5) edge (v11);
\draw [blue!30, line width = 2] (v5) edge (v13);
\draw  [blue!30, line width = 2](v5) edge (v12);
\draw [blue!30, line width = 2] (v4) edge (v11);
\draw [blue!30, line width = 2] (v4) edge (v13);
\draw [blue!30, line width = 2] (v4) edge (v12);

\draw  [blue!30, line width = 2](v1) edge (v11);
\draw  [blue!30, line width = 2](v1) edge (v13);
\draw [blue!30, line width = 2] (v1) edge (v12);
\draw [blue!30, line width = 2] (v2) edge (v11);
\draw [blue!30, line width = 2] (v2) edge (v13);
\draw [blue!30, line width = 2] (v2) edge (v12);
\draw [blue!30, line width = 2] (v3) edge (v11);
\draw [blue!30, line width = 2] (v3) edge (v13);
\draw [blue!30, line width = 2] (v3) edge (v12);

\draw [red!100, line width = 2]  (v6) edge (v7);
\draw [red!100, line width = 2] (v6) edge (v10);
\draw [red!100, line width = 2] (v6) edge (v8);
\draw[red!100, line width = 2]  (v5) edge (v7);
\draw [red!100, line width = 2] (v5) edge (v10);
\draw  [red!100, line width = 2](v5) edge (v8);
\draw [red!100, line width = 2] (v4) edge (v7);
\draw [red!100, line width = 2] (v4) edge (v10);
\draw [red!100, line width = 2] (v4) edge (v8);

\draw  [red!100, line width = 2](v1) edge (v7);
\draw  [red!100, line width = 2](v1) edge (v10);
\draw [red!100, line width = 2] (v1) edge (v8);
\draw [red!100, line width = 2] (v2) edge (v7);
\draw [red!100, line width = 2] (v2) edge (v10);
\draw [red!100, line width = 2] (v2) edge (v8);
\draw [red!100, line width = 2] (v3) edge (v7);
\draw [red!100, line width = 2] (v3) edge (v10);
\draw [red!100, line width = 2] (v3) edge (v8);

\node at (4,3.5) {\LARGE{$|U_1|=n_1$}};
\node at (-9.5,-3) {\LARGE{$|U_2|=n-1$}};
\node at (4,-3) {\LARGE{$|U_3|=n-1$}};

\draw [red!100, line width = 2]  (v8) edge (v12);
\draw   [red!100, line width = 2](v8) edge (v13);
\draw [red!100, line width = 2]  (v8) edge (v11);
\draw  [red!100, line width = 2] (v10) edge (v12);
\draw  [red!100, line width = 2] (v10) edge (v13);
\draw  [red!100, line width = 2] (v10) edge (v11);
\draw  [red!100, line width = 2] (v7) edge (v12);
\draw  [red!100, line width = 2] (v7) edge (v13);
\draw  [red!100, line width = 2] (v7) edge (v11);

\end{tikzpicture}

\caption{Example~\ref{example-2}.}\label{ex2}

\end{minipage}
\end{figure}

\subsection{Example with no red $M_{n}$ and no blue $P_{2n+1}$: too few vertices}\label{example-3}
Let $G=K_{3n-1}$. Partition $V(G)$
into sets $U_1$ and $U_2$ with $|U_1|=2n$ and $|U_2|=n-1$. Color the edges of $G[U_1,U_2]$ with red and the rest of the edges with blue. Since the red subgraph of $G$ has vertex cover $U_2$ with $|U_2|=n-1$, it does not contain $M_{n}$. Since each component of the blue subgraph of $G$ has fewer than $2n+1$ vertices, it does not contain $P_{2n+1}$.

Therefore 
\[
 R(P_{2n},P_{2n+1})\geq R(M_{n},P_{2n+1})\geq 3n,
\]
which yields for $P_{2n+1}$ the following strengthening of~(\ref{jj1}):
\begin{equation}\label{jj31}
	 \mbox{for $P_{2n+1}$,}\quad N\geq 3n.
\end{equation}

\subsection{Example with no monochromatic  $C_{\geq 2n}$ when $N-n_1 -n_2 \le 2$} \label{example-4}
This example, and all the ones that follow, show that additional restrictions are necessary when $G$ is bipartite or close to bipartite.

Let $G=K_{n_1,\ldots,n_s}$ satisfy~(\ref{jj1}) and~(\ref{jj2}) with
$N-n_1-n_2\leq 2$ such that $n_1\leq 2n-2$. Then also $n_2\leq 2n-2$, so  
$G\subseteq K_{2n-2,2n-2,1,1}$. Thus we assume $G= K_{2n-2,2n-2,1,1}$ with
$V_1=\{v_1,\ldots,v_{2n-2}\}$, $V_2=\{u_1,\ldots,u_{2n-2}\}$, $V_3=\{x\}$ and $V_4=\{y\}$.
 Let $V'_1=\{v_1,\ldots,v_{n-1}\}$, $V''_1=V_1-V'_1$, $V'_2=\{u_1,\ldots,u_{n-1}\}$, $V''_2=V_2-V'_2$.
Color the edges in $G[V_1',V'_2]$, $G[V_1'',V''_2]$ and in $G[V_3,V_1\cup V_2\cup V_4]$ with red, and all other edges
with blue. Then the red graph $G_1$ has cut vertex $x$, and the components of $G_1-x$ have sizes $2n-2$, $2n-2$ and $1$,
so $G_1$ has no $C_{\ge 2n}$. Similarly, $G_2$ contains no $C_{\ge 2n}$; see Figure~\ref{ex4}.

To rule out this example, we add the condition
\begin{equation}\label{jf1}
 \mbox{\em For $C_{\geq 2n}$, if $N-n_1-n_2\leq 2$, then }\; n_1\geq 2n-1.
\end{equation}

\subsection{Example with no monochromatic  $C_{\geq 2n}$ when $N-n_1 -n_2 \le 1$} \label{example-5}

Let $G=K_{n_1,\ldots,n_s}$ satisfying~(\ref{jj1}),~(\ref{jj2}) and~(\ref{jf1})  with
$N-n_1-n_2\leq 1$ such that $N+n_1\leq 6n-3$. Since by~(\ref{jf1}),  $n_1\geq 2n-1$, we get 
$N-n_1\leq (6n-3)-2(2n-1)=2n-1$, but (\ref{jj2}) implies $N-n_1\ge 2n-1$; therefore both inequalities are tight and $N-n_1 = n_1 = 2n-1$. Hence 
$G\subseteq K_{2n-1,2n-2,1}$, which is a subgraph of the graph $K_{2n-2,2n-2,1,1}$ considered in Example~\ref{example-4}.

This example is not ruled out by \eqref{jf1}, so we add the condition
\begin{equation}\label{jm1}
 \mbox{\em For $C_{\geq 2n}$, if $N-n_1-n_2\leq 1$, then }\; n_1+N\geq 6n-2.
\end{equation}

\subsection{Example with no monochromatic $P_{2n+1}$ when $G$ is bipartite}\label{example-6}

Suppose $n_3=0$ and $n_1\leq 2n$. Then $n_2\leq 2n$ as well, so $G\subseteq K_{2n,2n}$. Thus we assume $G= K_{2n,2n}$ with
$V_1=\{v_1,\ldots,v_{2n}\}$ and $V_2=\{u_1,\ldots,u_{2n}\}$.
 Let $V'_1=\{v_1,\ldots,v_{n}\}$, $V''_1=V_1-V'_1$, $V'_2=\{u_1,\ldots,u_{n}\}$, $V''_2=V_2-V'_2$.
Color the edges in $G(V_1',V'_2)$ and  $G(V_1'',V''_2)$  with red, and all other edges
with blue. Then each component in the red graph and each component in the blue graph has  $2n$
vertices and thus does not contain $P_{2n+1}$; see Figure~\ref{ex5}.

To rule out this example, we add the condition
\begin{equation}\label{jf2}
 \mbox{\em For $P_{2n+1}$, if $n_3=0$, then }\; n_1\geq 2n+1.
 \end{equation}

\subsection{Example with no monochromatic  $C_{2n}$ when $N-n_1 -n_2 \le 2$} \label{example-7}

Let $G=K_{n_1,\ldots,n_s}$ satisfying~(\ref{jj1}),~(\ref{jj2}) and~(\ref{jf1})  with
$N-n_1-n_2=2$ such that $N\leq 4n-2$. By~(\ref{jf1}), $N-n_1\leq 2n-1$.
Now~(\ref{jj2}) implies $N-n_1=2n-1=n_1$. Hence $G\subseteq K_{2n-1,2n-3,1,1}$. 
Thus we assume $G= K_{2n-1,2n-3,1,1}$ with
$V_1=\{v_1,\ldots,v_{2n-1}\}$, $V_2=\{u_1,\ldots,u_{2n-3}\}$, $V_3=\{x\}$ and $V_4=\{y\}$.
 Define $A=\{v_2,v_3,\ldots,v_n\} $,  $B=\{v_{n+1},v_{n+2},\ldots,v_{2n-1}\} $, $C=\{u_1,u_2,\ldots,u_{n-1}\} $ and
$D=\{u_n,u_{n+1},\ldots,u_{2n-3}\} $.  We assign the colors to the edges of $G$ as follows. 

\begin{enumerate}
\item $G[A,C]$ and $G[B,D]$ are complete bipartite red graphs.

\item $G[A,D]$ and $G[B,C]$ are complete bipartite blue graphs.

\item $v_1$ has all blue edges to $V_2$.

\item $x$ has all red edges to $V_1 \cup V_2\cup \{y\}$.

\item $y$ has all red edges to $B \cup D\cup \{x\}$ and all blue edges to $A \cup C \cup \{v_1\}$.
\end{enumerate}

We claim that $G$ has no monochromatic cycle of length $2n$. Indeed, consider first the red graph $G_1$.
 The graph $G_1-x$  has three components: a) $A \cup C$ of size $2n-2$, b) $\{v_1\}$ of size $1$,  and c) $B \cup D \cup \{y\}$ of size $2n-2$. 
 Thus $G$ has  no red cycle of length $2n$ since the largest block of $G_1$  has order $2n-1$. 

  Consider now the blue graph $G_2$. We ignore $x$ since it is isolated.
 Suppose $G_2$ contains a $2n$-cycle $F$.  Since $v_1$ is a cut vertex of $G_2 - \{y\}$ with the components of $G_2 - \{y, v_1\}$  of  order  $2n-3$ and $2n-2$, $F$ contains $y$. 

 If we delete from $G_2$ all edges in $G_2[\{y\},C]$, then  the blocks in the remaining blue graph will be of order $2n-1$ and $2n-1$; thus
 $F$ contains an edge from $y$ to $C$, say $yz$.  Furthermore, if $yz$ is the only edge in $F$ connecting $y$ to $C$, then all other edges in
 $F$ belong to the bipartite graph
 $H=G_2[A \cup B \cup \{v_1\}, D \cup \{y\} \cup C]$. But this bipartite graph $H$ cannot have a path of odd length $2n-1$ between the vertices $y$ and $z$  in the same part.

Thus,  $F$ has to use two edges from $y$ to $C$, say $yz_1$ and $yz_2$. Then the problem is reduced to finding a blue path from $z_1$ to $z_2$ of length $2n-2$ in $G_2[C, B \cup \{v_1\}]$. However, it is impossible because $|C| = n-1$ and the longest path from $z_1$ to $z_2$ in $G_2[C, B\cup \{v_1\}]$  has $2n-3$ vertices.

Note that this example has cycles of length greater than $2n-1$, but all such cycles are odd.

\vspace{5mm}

\begin{figure}[ht]\label{f2}
\hspace{5mm}
\begin{minipage}[b]{0.5\textwidth}
\begin{tikzpicture}[scale=0.4, transform shape]
\draw  (-3.5,3.5) ellipse (6.5 and 2.5);

\draw  (-3.5,-3.5) ellipse (6.5 and 2.5);
\draw  (-6.5,3.5) ellipse (2.8 and 1.5);
\draw  (-0.5,3.5) ellipse (2.8 and 1.5);

\draw  (-6.5,-3.5) ellipse (2.8 and 1.5);
\draw  (-0.5,-3.5) ellipse (2.8 and 1.5);

\node [ shape=circle, minimum size=0.1cm,  fill = black!1000, align=center] (v1) at (-12,1) {};
\node [ shape=circle, minimum size=0.1cm,  fill = black!1000, align=center] (v2) at (-12,-1.5) {};

\node [ shape=circle, minimum size=0.1cm,  fill = black!1000, align=center] (v4) at (-5,3.5) {};
\node [ shape=circle, minimum size=0.1cm,  fill = black!1000, align=center] (v5) at (-6.5,3.5) {};
\node [ shape=circle, minimum size=0.1cm,  fill = black!1000, align=center] (v6) at (-8,3.5) {};

\node [ shape=circle, minimum size=0.1cm,  fill = black!1000, align=center] (v10) at (1,3.5) {};
\node [ shape=circle, minimum size=0.1cm,  fill = black!1000, align=center] (v11) at (-0.5,3.5) {};
\node [ shape=circle, minimum size=0.1cm,  fill = black!1000, align=center] (v12) at (-2,3.5) {};

\node [ shape=circle, minimum size=0.1cm,  fill = black!1000, align=center] (v23) at (-5,-3.5) {};
\node [ shape=circle, minimum size=0.1cm,  fill = black!1000, align=center] (v24) at (-6.5,-3.5) {};
\node [ shape=circle, minimum size=0.1cm,  fill = black!1000, align=center] (v25) at (-8,-3.5) {};

\node [ shape=circle, minimum size=0.1cm,  fill = black!1000, align=center] (v28) at (1,-3.5) {};
\node [ shape=circle, minimum size=0.1cm,  fill = black!1000, align=center] (v29) at (-0.5,-3.5) {};
\node [ shape=circle, minimum size=0.1cm,  fill = black!1000, align=center] (v30) at (-2,-3.5) {};

\node at (-12.5,3.5) {\huge{$|V_1|=2n-2$}};
\node at (-12.5,-4) {\huge{$|V_2|=2n-2$}};

\node at (-14,-1.5) {\huge{$V_4 = \{y\}$}};
\node at (-14,1) {\huge{$V_3 = \{x\}$}};

\node at (-0.5,-4.35) {\large{$|V''_2|=n-1$}};
\node at (-6.5,-4.35) {\large{$|V'_2|=n-1$}};
\node at (-0.5,4.35) {\large{$|V''_1|=n-1$}};
\node at (-6.5,4.35) {\large{$|V'_1|=n-1$}};

\draw [red, line width = 2] (v6) edge (v25);
\draw  [red, line width = 2](v6) edge (v24);
\draw[red, line width = 2]  (v6) edge (v23);

\draw [red, line width = 2] (v5) edge (v25);
\draw[red, line width = 2]  (v5) edge (v24);
\draw [red, line width = 2] (v23) edge (v5);

\draw [red, line width = 2] (v4) edge (v25);
\draw [red, line width = 2] (v4) edge (v24);
\draw [red, line width = 2] (v4) edge (v23);

\draw  [red, line width = 2](v12) edge (v30);
\draw  [red, line width = 2](v12) edge (v29);
\draw [red, line width = 2] (v12) edge (v28);

\draw [red, line width = 2] (v11) edge (v30);
\draw [red, line width = 2] (v11) edge (v29);
\draw [red, line width = 2] (v11) edge (v28);

\draw  [red, line width = 2](v10) edge (v30);
\draw [red, line width = 2] (v10) edge (v29);
\draw [red, line width = 2] (v10) edge (v28);

\draw [red, line width = 2] (v1) edge (v10);
\draw [red, line width = 2] (v1) edge (v11);
\draw [red, line width = 2] (v1) edge (v12);

\draw [red, line width = 2] (v1) edge (v4);
\draw [red, line width = 2] (v1) edge (v5);
\draw[red, line width = 2]  (v1) edge (v6);

\draw [red, line width = 2] (v1) edge (v28);
\draw[red, line width = 2]  (v1) edge (v29);
\draw [red, line width = 2] (v1) edge (v30);

\draw [red, line width = 2] (v1) edge (v23);
\draw [red, line width = 2] (v1) edge (v24);
\draw [red, line width = 2] (v1) edge (v25);
\draw  [red, line width = 2]  (v1) edge (v2);

\draw  [blue!20, line width = 2] (v6) edge (v30);
\draw  [blue!20, line width = 2](v6) edge (v29);
\draw  [blue!20, line width = 2](v6) edge (v28);

\draw  [blue!20, line width = 2](v5) edge (v30);
\draw  [blue!20, line width = 2](v29) edge (v5);
\draw [blue!20, line width = 2] (v5) edge (v28);

\draw [blue!20, line width = 2] (v4) edge (v30);
\draw [blue!20, line width = 2] (v4) edge (v29);
\draw [blue!20, line width = 2] (v4) edge (v28);

\draw [blue!20, line width = 2] (v25) edge (v12);
\draw[blue!20, line width = 2]  (v12) edge (v24);
\draw  [blue!20, line width = 2](v12) edge (v23);

\draw[blue!20, line width = 2]  (v11) edge (v25);
\draw  [blue!20, line width = 2](v11) edge (v24);
\draw[blue!20, line width = 2]  (v11) edge (v23);

\draw [blue!20, line width = 2] (v10) edge (v25);
\draw[blue!20, line width = 2]  (v10) edge (v24);
\draw[blue!20, line width = 2]  (v10) edge (v23);

\draw  [blue!20, line width = 2] (v2) edge (v10);
\draw  [blue!20, line width = 2] (v2) edge (v11);
\draw  [blue!20, line width = 2] (v2) edge (v12);

\draw   [blue!20, line width = 2](v2) edge (v4);
\draw  [blue!20, line width = 2] (v2) edge (v5);
\draw  [blue!20, line width = 2] (v2) edge (v6);

\draw   [blue!20, line width = 2](v2) edge (v28);
\draw  [blue!20, line width = 2] (v2) edge (v29);
\draw   [blue!20, line width = 2](v2) edge (v30);

\draw  [blue!20, line width = 2] (v2) edge (v23);
\draw  [blue!20, line width = 2] (v2) edge (v24);
\draw   [blue!20, line width = 2](v2) edge (v25);
\end{tikzpicture}
 
\caption{Example~\ref{example-4}.}\label{ex4}
\end{minipage}
\begin{minipage}[b]{0.3\textwidth}
\begin{tikzpicture}[scale=0.4, transform shape]
\draw  (-3.5,3.5) ellipse (6.5 and 2.5);

\draw  (-3.5,-3.5) ellipse (6.5 and 2.5);
\draw  (-6.5,3.5) ellipse (2.8 and 1.5);
\draw  (-0.5,3.5) ellipse (2.8 and 1.5);

\draw  (-6.5,-3.5) ellipse (2.8 and 1.5);
\draw  (-0.5,-3.5) ellipse (2.8 and 1.5);

\node [ shape=circle, minimum size=0.1cm,  fill = black!1000, align=center] (v3) at (-4.5,3.5) {};
\node [ shape=circle, minimum size=0.1cm,  fill = black!1000, align=center] (v4) at (-5.83,3.5) {};
\node [ shape=circle, minimum size=0.1cm,  fill = black!1000, align=center] (v5) at (-7.167,3.5) {};
\node [ shape=circle, minimum size=0.1cm,  fill = black!1000, align=center] (v6) at (-8.5,3.5) {};

\node [ shape=circle, minimum size=0.1cm,  fill = black!1000, align=center] (v9) at (1.5,3.5) {};
\node [ shape=circle, minimum size=0.1cm,  fill = black!1000, align=center] (v10) at (0.167,3.5) {};
\node [ shape=circle, minimum size=0.1cm,  fill = black!1000, align=center] (v11) at (-1.167,3.5) {};
\node [ shape=circle, minimum size=0.1cm,  fill = black!1000, align=center] (v12) at (-2.5,3.5) {};

\node [ shape=circle, minimum size=0.1cm,  fill = black!1000, align=center] (v22) at (-4.5,-3.5) {};
\node [ shape=circle, minimum size=0.1cm,  fill = black!1000, align=center] (v23) at (-5.83,-3.5) {};
\node [ shape=circle, minimum size=0.1cm,  fill = black!1000, align=center] (v24) at (-7.167,-3.5) {};
\node [ shape=circle, minimum size=0.1cm,  fill = black!1000, align=center] (v25) at (-8.5,-3.5) {};

\node [ shape=circle, minimum size=0.1cm,  fill = black!1000, align=center] (v27) at (1.5,-3.5) {};
\node [ shape=circle, minimum size=0.1cm,  fill = black!1000, align=center] (v28) at (0.167,-3.5) {};
\node [ shape=circle, minimum size=0.1cm,  fill = black!1000, align=center] (v29) at (-1.167,-3.5) {};
\node [ shape=circle, minimum size=0.1cm,  fill = black!1000, align=center] (v30) at (-2.5,-3.5) {};

\node at (5,3.5) {\huge{$|V_1|=2n$}};
\node at (5,-4) {\huge{$|V_2|=2n$}};

\node at (-0.5,-4.35) {\large{$|V''_2|=n$}};
\node at (-6.5,-4.35) {\large{$|V'_2|=n$}};
\node at (-0.5,4.35) {\large{$|V''_1|=n$}};
\node at (-6.5,4.35) {\large{$|V'_1|=n$}};

\draw [red, line width = 2] (v6) edge (v25);
\draw  [red, line width = 2](v6) edge (v24);
\draw[red, line width = 2]  (v6) edge (v23);
\draw [red, line width = 2] (v6) edge (v22);

\draw [red, line width = 2] (v5) edge (v25);
\draw[red, line width = 2]  (v5) edge (v24);
\draw [red, line width = 2] (v23) edge (v5);
\draw [red, line width = 2] (v5) edge (v22);

\draw [red, line width = 2] (v4) edge (v25);
\draw [red, line width = 2] (v4) edge (v24);
\draw [red, line width = 2] (v4) edge (v23);
\draw [red, line width = 2] (v4) edge (v22);

\draw [red, line width = 2] (v3) edge (v25);
\draw [red, line width = 2] (v3) edge (v24);
\draw [red, line width = 2] (v3) edge (v23);
\draw [red, line width = 2] (v3) edge (v22);

\draw  [red, line width = 2](v12) edge (v30);
\draw  [red, line width = 2](v12) edge (v29);
\draw [red, line width = 2] (v12) edge (v28);
\draw[red, line width = 2]  (v12) edge (v27);

\draw [red, line width = 2] (v11) edge (v30);
\draw [red, line width = 2] (v11) edge (v29);
\draw [red, line width = 2] (v11) edge (v28);
\draw [red, line width = 2] (v11) edge (v27);

\draw  [red, line width = 2](v10) edge (v30);
\draw [red, line width = 2] (v10) edge (v29);
\draw [red, line width = 2] (v10) edge (v28);
\draw [red, line width = 2] (v10) edge (v27);

\draw [red, line width = 2] (v9) edge (v30);
\draw [red, line width = 2] (v9) edge (v29);
\draw  [red, line width = 2](v9) edge (v28);
\draw [red, line width = 2] (v9) edge (v27);

\draw  [blue!20, line width = 2] (v6) edge (v30);
\draw  [blue!20, line width = 2](v6) edge (v29);
\draw  [blue!20, line width = 2](v6) edge (v28);
\draw  [blue!20, line width = 2](v6) edge (v27);

\draw  [blue!20, line width = 2](v5) edge (v30);
\draw  [blue!20, line width = 2](v29) edge (v5);
\draw [blue!20, line width = 2] (v5) edge (v28);
\draw[blue!20, line width = 2]  (v5) edge (v27);

\draw [blue!20, line width = 2] (v4) edge (v30);
\draw [blue!20, line width = 2] (v4) edge (v29);
\draw [blue!20, line width = 2] (v4) edge (v28);
\draw[blue!20, line width = 2]  (v4) edge (v27);

\draw [blue!20, line width = 2] (v3) edge (v30);
\draw [blue!20, line width = 2] (v3) edge (v29);
\draw [blue!20, line width = 2] (v3) edge (v28);
\draw[blue!20, line width = 2]  (v3) edge (v27);

\draw [blue!20, line width = 2] (v25) edge (v12);
\draw[blue!20, line width = 2]  (v12) edge (v24);
\draw  [blue!20, line width = 2](v12) edge (v23);
\draw[blue!20, line width = 2]  (v12) edge (v22);

\draw[blue!20, line width = 2]  (v11) edge (v25);
\draw  [blue!20, line width = 2](v11) edge (v24);
\draw[blue!20, line width = 2]  (v11) edge (v23);
\draw  [blue!20, line width = 2](v11) edge (v22);

\draw [blue!20, line width = 2] (v10) edge (v25);
\draw[blue!20, line width = 2]  (v10) edge (v24);
\draw[blue!20, line width = 2]  (v10) edge (v23);
\draw [blue!20, line width = 2] (v10) edge (v22);

\draw [blue!20, line width = 2] (v9) edge (v25);
\draw[blue!20, line width = 2]  (v9) edge (v24);
\draw [blue!20, line width = 2] (v9) edge (v23);
\draw[blue!20, line width = 2]  (v9) edge (v22);
\end{tikzpicture}

\caption{Example~\ref{example-5}.}\label{ex5}

\end{minipage}
\end{figure} 

\vspace{5mm}

To rule out this example, we add the condition
\begin{equation}\label{jm2}
 \mbox{\em For $C_{ 2n}$, if $N-n_1-n_2\leq 2$, then }\; N\geq 4n-1.
\end{equation}
 
\subsection{Results}

Our key result is that for large $n$, the necessary conditions~(\ref{jj1}),~(\ref{jj2}) and~(\ref{jm2})  for the presence in
a $2$-edge-colored $K_{n_1,\ldots,n_s}$ of a monochromatic $C_{ 2n}$, together are  also sufficient for this.

\begin{theorem}\label{tC2n} Let  $s\geq 2$ and $n$ be sufficiently large. Let $n_1\geq\ldots\geq n_s$ and $N=n_1+\ldots+n_s$
satisfy~(\ref{jj1}),~(\ref{jj2}) and~(\ref{jm2}). Then for each $2$-edge-coloring $f$ of the complete $s$-partite graph
$K_{n_1,\ldots,n_s}$, there exists a monochromatic cycle $C_{2n}$.
\end{theorem}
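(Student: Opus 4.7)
\medskip
\noindent\textbf{Proof plan for Theorem~\ref{tC2n}.}

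The plan is to follow the Figaj–\L uczak regularity-plus-connected-matching paradigm, using the authors' own stability theorem as the main engine. Fix a $2$-edge-coloring $f$ of $G = K_{n_1,\ldots,n_s}$. First I would apply the Szemer\'edi Regularity Lemma to each color class simultaneously, taking a partition that refines the multipartition $V_1,\ldots,V_s$, to obtain an $\varepsilon$-regular partition with all clusters of equal size. Form the reduced graph $R$ on the clusters, keep only pairs that are $\varepsilon$-regular with density above some threshold in at least one color, and color each surviving edge of $R$ by its majority color (red or blue). Since the original graph is complete multipartite satisfying~(\ref{jj1}) and~(\ref{jj2}), the reduced graph $R$ inherits a multipartite structure with part sizes $m_1,\ldots,m_s$ proportional to $n_1,\ldots,n_s$, so that $M:=m_1+\cdots+m_s$, $M-m_1$, and (when $M-m_1-m_2\le 2$) $M$ still satisfy scaled versions of the three necessary inequalities, with slightly weakened constants.

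Next I would invoke the authors' stability theorem on monochromatic connected matchings applied to $R$: it asserts that either $R$ contains a monochromatic connected matching $M_{n'}$ with $n'$ a little larger than $n/L$ (where $L$ is the number of clusters per unit of $n$), or the coloring of $R$ is in a bounded-distance neighborhood of one of the extremal configurations (those matching Examples~\ref{example-1}, \ref{example-2}, \ref{example-4}, \ref{example-5}, \ref{example-7}). In the non-extremal case, standard blow-up arguments convert the monochromatic connected matching into a monochromatic cycle in $G$ of any prescribed \emph{even} length in a wide range around the target, in particular length exactly $2n$: one picks an odd closed walk in the component containing the matching to adjust parity, and then uses the regularity of matched pairs plus super-regularization to route a Hamilton-like cycle of the required length through the appropriate clusters. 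Getting length exactly $2n$ (rather than $C_{\ge 2n}$) forces one to track parity carefully when alternating through the matched pairs and to leave enough slack in each cluster, but this is now a routine application of the Blow-up/Connected-Matching machinery and of the path-shortening lemmas from~\cite{FL1,GRSS1}.

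The main obstacle, as always in this framework, will be the extremal case. Here $R$ looks like one of the blocked configurations from Section~2: either almost all of one color class hides in a bipartite part $(U_1,U_2)$ with small $U_2$, or the coloring mimics the bipartite split of Example~\ref{example-4}/\ref{example-5}, or the $K_{2n-1,2n-3,1,1}$ coloring of Example~\ref{example-7}. In each case I would argue by contradiction: assume $G$ has no monochromatic $C_{2n}$ and use the inequalities (\ref{jj1}),~(\ref{jj2}),~(\ref{jm2}) to show that after pulling the approximate extremal structure back to $G$, there is enough extra vertex mass (at least one of $n_3,\ldots,n_s$ is nonempty, or $n_1+n_2$ is large enough to exceed the extremal threshold $4n-2$) to repair the missing cycle. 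Concretely, I would show that each extremal configuration falls into exactly one of the ruled-out examples, and that if $G$ is strictly better than the corresponding example then one can find either (i) a monochromatic subgraph containing a long even cycle directly using a small number of non-extremal edges as ``connectors'', or (ii) a red and blue component meeting the necessary structural Pósa/rotation lemma hypotheses for exact length $2n$. The case $M-m_1-m_2\le 2$ combined with the condition $N\ge 4n-1$ is the tightest, and exactly there (\ref{jm2}) is invoked to rule out the only remaining near-miss.

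Throughout, the constants are chosen in the hierarchy $1/n \ll \varepsilon \ll d \ll 1/s$, and ``$n$ sufficiently large'' absorbs all regularity error terms. The step I expect to be most technical is the extremal analysis matching the coloring of Example~\ref{example-7}, since its blue graph is connected and very close to containing a $C_{2n}$, failing only by a single parity argument on a bipartite subgraph; any strengthening of $G$ over this example must be shown to break that parity obstruction.
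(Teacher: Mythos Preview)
Your high-level strategy is exactly the paper's: regularity, then the stability theorem for connected matchings in the reduced graph, then a split into the connected-matching case (handled by standard blow-up machinery) and the extremal case. That much is correct and well articulated.

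The gap is in your description of the extremal case, and it is a substantive one. First, the stability theorem (Theorem~\ref{t1}) does not output ``proximity to one of Examples~\ref{example-1}--\ref{example-7}''; it outputs one of exactly two structural templates, the $(\lambda,i,1)$-bad and $(\lambda,i,2)$-bad partitions. These are more abstract than the extremal colorings, and the work of Sections~\ref{1bad}--\ref{k2n2n-1} (the vast majority of the paper) is precisely to take such a bad partition of $G$ and manufacture a $C_{2n}$ directly. Your plan of ``show each extremal configuration falls into exactly one of the ruled-out examples and then use the extra vertex mass to repair the cycle'' is not how this goes and would not go through: the bad partitions are loose enough that they do not pin down a unique extremal example, and the cycle is built by hand via extensive case analysis. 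The key tools you do not mention are the bipartite Hamiltonicity criteria of Chv\'atal, Berge, and Las Vergnas (Theorems~\ref{chvatal}--\ref{lasvergnas}); these are applied repeatedly to carefully chosen bipartite subgraphs assembled from pieces of the bad partition, after redistributing a small number of atypical vertices. Without these theorems the extremal analysis has no engine.

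Two further omissions matter. You need the reduction in Section~\ref{setupM} to a minimal counterexample, which forces $s\le 5$; without this the Regularity Lemma cannot be made to refine the multipartition, and your hierarchy $\varepsilon\ll 1/s$ is unjustified. And the role of condition~(\ref{jm2}) is not, as you suggest, to ``rule out a near-miss'' at the reduced-graph level: it is used in the setup to reduce the case $N-n_1-n_2\le 2$ entirely to $G=K_{2n,2n-1}$, which is then handled separately (Lemma~\ref{t2n2n-1}), and it resurfaces only in the nearly-bipartite subcase of Section~\ref{subsection:nearly-bipartite}.
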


Based on Theorem~\ref{tC2n}, we derive our other results. The first of them is on cycles of length at least $2n$ 
(it extends a result
by DeBiasio
and  Krueger~\cite{DK1}). Recall that~(\ref{jm2}) is not necessary for the existence of a monochromatic $C_{\geq 2n}$, 
but~(\ref{jj1}),~(\ref{jj2}),~(\ref{jf1}) and~(\ref{jm1}) are.

\begin{theorem}\label{tCgeq2n} Let  $s\geq 2$ and $n$ be sufficiently large. Let $n_1\geq\ldots\geq n_s$ and $N=n_1+\ldots+n_s$
satisfy~(\ref{jj1}),~(\ref{jj2}),~(\ref{jf1}) and~(\ref{jm1}). Then for each $2$-edge-coloring $f$ of the complete $s$-partite graph
$K_{n_1,\ldots,n_s}$, there exists a monochromatic cycle $C_{\geq 2n}$.
\end{theorem}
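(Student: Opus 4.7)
The proof splits into two cases according to whether condition (\ref{jm2}) is also satisfied. If it is, Theorem~\ref{tC2n} immediately supplies a monochromatic $C_{2n}$, which is a $C_{\geq 2n}$, and we are done. Otherwise, a short analysis from (\ref{jj2}), (\ref{jf1}), (\ref{jm1}) pins down a unique residual family of graphs, which we must handle directly.

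Concretely, suppose (\ref{jm2}) fails, so $N-n_1-n_2\leq 2$ and $N\leq 4n-2$. Condition (\ref{jf1}) forces $n_1\geq 2n-1$, and then (\ref{jj2}) yields $N\geq n_1+(2n-1)\geq 4n-2$; therefore $N=4n-2$ and $n_1=2n-1$. If $N-n_1-n_2\leq 1$, then (\ref{jm1}) would demand $n_1+N\geq 6n-2$, but here $n_1+N=6n-3$, a contradiction. So $N-n_1-n_2=2$, giving $n_2=2n-3$ and $n_3+\cdots+n_s=2$. Hence $G$ is either $K_{2n-1,2n-3,2}$ or $K_{2n-1,2n-3,1,1}$. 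Since $K_{2n-1,2n-3,2}\subset K_{2n-1,2n-3,1,1}$, any 2-coloring of the latter restricts to a 2-coloring of the former, so it suffices to prove the theorem for $G=K_{2n-1,2n-3,2}$.

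To dispatch this residual graph, the plan is to lift to a supergraph where condition (\ref{jm2}) becomes vacuous. Introduce a new vertex $v^\ast$ into the part of size $2$ and extend the coloring $f$ to a 2-coloring $f^+$ of $G^+=K_{2n-1,2n-3,3}$ arbitrarily. Since $N^+-n_1^+-n_2^+=3>2$, Theorem~\ref{tC2n} applies to $G^+$ and produces a monochromatic cycle $C$ of length $2n$. If $v^\ast\notin C$, then $C\subseteq G$ and we are done. If $v^\ast\in C$, with cycle-neighbors $a,b\in V_1\cup V_2$, we seek either (i) a vertex $w\in V_3\setminus\{v^\ast\}$ such that $wa$ and $wb$ carry the color of $C$ (yielding a monochromatic $C_{2n}$ in $G$), or (ii) a short detour from $a$ to $b$ through $V_1\cup V_2$ giving a monochromatic cycle of length $2n+1$ in $G$. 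When neither substitution is available, the coloring is forced into (a near-copy of) Example~\ref{example-7}, and we exhibit a blue $C_{2n+1}$ by the same construction used there.

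The main obstacle is the substitution/detour step in the residual case: ruling out a coloring in which every attempt to reroute $C$ through $G$ fails simultaneously. The delicate point is to show that any such obstruction forces a rigid bipartite $A$-$B$-$C$-$D$ block structure between $V_1$ and $V_2$ matching Example~\ref{example-7}, at which point the explicit odd cycle of length $2n+1$ finishes the argument. Controlling this near-extremal configuration is precisely where the stability theorem on monochromatic connected matchings cited in the abstract enters.
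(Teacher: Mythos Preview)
Your reduction to the residual case is correct and matches the paper exactly: once (\ref{jm2}) fails while (\ref{jj2}), (\ref{jf1}), (\ref{jm1}) hold, one is forced to $N=4n-2$, $n_1=2n-1$, $N-n_1-n_2=2$, and hence to $G\supseteq K_{2n-1,2n-3,2}$. The paper then proves a dedicated Lemma~\ref{t2n-12n-32} for this graph.

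Where your proposal diverges is in the handling of $K_{2n-1,2n-3,2}$, and here there is a genuine gap. The ``add a phantom vertex $v^\ast$ and reroute'' idea is appealing, but the rerouting step is not carried out and does not obviously work. Concretely: the monochromatic $C_{2n}$ you obtain in $G^+=K_{2n-1,2n-3,3}$ may use all three vertices of the enlarged third part, so neither $u_1$ nor $u_2$ is available as a substitute; and a length-$2$ detour $a$--$x$--$b$ in the right colour with $x\notin C$ need not exist. Your fallback claim --- that failure of all such reroutes forces the global colouring into the Example~\ref{example-7} block structure --- is the entire content of the problem, and you do not prove it. Knowing that one particular cycle $C$ resists local repair says nothing \emph{a priori} about the global edge-colouring; to extract global structure you would have to run the regularity/stability machinery on $G$ itself, at which point the $G^+$ detour has bought you nothing.

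The paper's approach is to do exactly that direct analysis: it applies the bad-partition framework to $G=K_{2n-1,2n-3,2}$ (viewing $V_2\cup V_3$ as one side of a near-bipartite graph), reuses the case analysis already performed for $K_{2n,2n-1}$ in Lemma~\ref{t2n2n-1} to narrow down to two very specific configurations of the sets $A,B,C,D$, and then in each configuration uses the two vertices of $V_3$ to construct a monochromatic cycle of length $2n+1$ explicitly (via Theorem~\ref{berge} and short connecting paths). Your proposal correctly senses that stability is unavoidable here, but does not execute it; the paper's Lemma~\ref{t2n-12n-32} is where that execution happens.
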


The results for paths of even and odd length are somewhat different.
  The first of them shows that for large $n$, the necessary conditions~(\ref{jj1}) and~(\ref{jj2})  for the presence in
a $2$-edge-colored $K_{n_1,\ldots,n_s}$ of a monochromatic  connected matching $M_{n}$, together are  sufficient for the presence of
the monochromatic path $P_{ 2n}$.

\begin{theorem}\label{tP2n} Let  $s\geq 2$ and $n$ be sufficiently large. Let $n_1\geq\ldots\geq n_s$ and $N=n_1+\ldots+n_s$
satisfy~(\ref{jj1}) and~(\ref{jj2}). Then for each $2$-edge-coloring $f$ of the complete $s$-partite graph
$K_{n_1,\ldots,n_s}$, there exists a monochromatic path $P_{2n}$.
\end{theorem}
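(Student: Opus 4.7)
The plan is to reduce to Theorem~\ref{tC2n} whenever possible, using that every $C_{2n}$ contains $P_{2n}$ as a subgraph, and to handle the remaining ``near-bipartite'' regime by direct means. I split the argument into two cases depending on whether condition~(\ref{jm2}) holds.

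\emph{Case 1: $N-n_1-n_2\ge 3$ or $N\ge 4n-1$.} Then~(\ref{jm2}) holds, and combined with the assumed~(\ref{jj1}) and~(\ref{jj2}) we may apply Theorem~\ref{tC2n}. This yields a monochromatic $C_{2n}$, which contains the desired $P_{2n}$.

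\emph{Case 2: $N-n_1-n_2\le 2$ and $N\le 4n-2$.} Here~(\ref{jm2}) fails, so Theorem~\ref{tC2n} is unavailable. Combining the case hypothesis with~(\ref{jj1}) and~(\ref{jj2}) pins down the geometry of $G$: a short calculation gives $2n-3\le n_2\le n_1\le 2n-1$ and $n_1+n_2\ge 3n-3$, so $G$ is a subgraph of $K_{n_1,n_2}$ together with at most two extra vertices in other parts. In the purely bipartite subcase ($n_3+\dots+n_s=0$), condition~(\ref{jj2}) forces $n_2\ge 2n-1$, so $K_{2n-1,2n-1}\subseteq G$, and Theorem~\ref{t21} immediately produces a monochromatic $P_{2\lceil(2n-1)/2\rceil}=P_{2n}$. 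In the subcases where there are one or two extra vertices outside $V_1\cup V_2$, the plan is first to apply Theorem~\ref{t21} to a large balanced bipartite subgraph of $G[V_1,V_2]$ to obtain a long monochromatic path, and then to attach the extra vertices using their incident majority-color edges, exploiting the freedom in choosing the endpoints of the bipartite path.

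The main obstacle is the sharp subcase $N=4n-2$, $(n_1,n_2,n_3,n_4)=(2n-1,2n-3,1,1)$ of Example~\ref{example-7}, which by design carries a coloring with no monochromatic $C_{2n}$ and so is not covered by Theorem~\ref{tC2n}. Here one must show that although $C_{2n}$ can be blocked, $P_{2n}$ never can. The cleanest route is a direct case analysis of the colors of the few edges incident to the two singleton parts, combined with Theorem~\ref{t21} applied to a carefully chosen balanced bipartite sub-coloring of $G[V_1,V_2]$ (from which the $2n-2$ interior vertices of the path are drawn, the two endpoints being adjusted to absorb $x$ and $y$); alternatively, one may re-run the stability/regularity argument from Sections~\ref{setupM}--\ref{k2n2n-1} targeting a $P_{2n}$ rather than a $C_{2n}$, which is strictly easier since a path need not close up and so two endpoints are ``free.''
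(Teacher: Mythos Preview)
Your Case~1 and the bipartite subcase of Case~2 are fine and match the paper's strategy. The gap is in the $s\ge 3$ subcase of Case~2. There, Theorem~\ref{t21} applied to $G[V_1,V_2]$ gives only a monochromatic $P_{2n-2}$ (since $n_2$ can be as small as $2n-3$), and you then need to extend by two edges. But Theorem~\ref{t21} is a pure existence statement: it hands you \emph{some} monochromatic path with no control over its colour or its endpoints. Your phrase ``exploiting the freedom in choosing the endpoints of the bipartite path'' presumes a freedom that Theorem~\ref{t21} does not provide. The attachment step therefore has no justification, and your two fallbacks (``direct case analysis of the edges at $x,y$'' and ``re-run the stability/regularity argument'') are not proofs but outlines of work still to be done; in particular the second is essentially the entire content of Sections~\ref{setupM}--\ref{k2n2n-1}.

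The paper closes this gap differently and much more cheaply, via Lemma~\ref{NoC2n}. The point is that when~(\ref{jj1}) and~(\ref{jj2}) hold, $s\ge 3$, and there is no monochromatic $C_{2n}$, the proof of Theorem~\ref{tC2n} can only have failed in the nearly-bipartite subcase of Subsection~\ref{subsection:nearly-bipartite}. But that subcase already produced four blocks $X_{11},X_{12},X_{21},X_{22}$ (each of size $\tfrac n2+10$) with the property that in each of $G_1[X_{11},X_{21}]$, $G_1[X_{12},X_{22}]$, $G_2[X_{12},X_{21}]$, $G_2[X_{11},X_{22}]$ one can find a $(v,w)$-path on any $m$ vertices with $n-10\le m\le n+10$ and the correct parity, for \emph{arbitrary} prescribed endpoints $v,w$. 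A single vertex $x\in V_3$ then glues two such length-$\ge n$ paths into a monochromatic $P_{2n+1}$. So instead of trying to extend a black-box $P_{2n-2}$, the paper reuses the internal structure already established inside the proof of Theorem~\ref{tC2n}; this is the missing idea in your proposal.
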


Our last  result  implies Conjecture~\ref{cc1}:

\begin{theorem}\label{tP2n+1} Let  $s\geq 2$ and $n$ be sufficiently large. Let $n_1\geq\ldots\geq n_s$ and $N=n_1+\ldots+n_s$
satisfy~(\ref{jj2}),~(\ref{jj31}) and~(\ref{jf2}). Then for each $2$-edge-coloring $f$ of the complete $s$-partite graph
$K_{n_1,\ldots,n_s}$, there exists a monochromatic path $P_{2n+1}$.
\end{theorem}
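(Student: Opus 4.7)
The plan is to derive Theorem \ref{tP2n+1} from Theorem \ref{tC2n} via a one-vertex extension argument.

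I would first check that the hypotheses (\ref{jj2}), (\ref{jj31}), (\ref{jf2}) imply the hypotheses of Theorem \ref{tC2n} outside a narrow exceptional regime. Condition (\ref{jj1}) is strictly weaker than (\ref{jj31}), and (\ref{jj2}) is shared. For (\ref{jm2}), if $n_3 = 0$ then (\ref{jf2}) gives $n_1 \geq 2n+1$ and (\ref{jj2}) gives $n_2 \geq 2n-1$, so $N \geq 4n$. Hence (\ref{jm2}) can fail only when $n_3 \geq 1$, $N - n_1 - n_2 \leq 2$, and $N \leq 4n-2$; a short arithmetic check restricts this to a small family of graphs with $n_1, n_2 \in \{2n-3, 2n-2, 2n-1\}$ and $s \leq 4$.

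In the generic case I would apply Theorem \ref{tC2n} to produce a monochromatic cycle $C$ of length $2n$, red without loss of generality. Set $W = V(G) \setminus V(C)$; by (\ref{jj31}), $|W| \geq n$. If some $w \in W$ has a red edge to a cycle vertex $v_i$, then slicing $C$ at $v_{i-1}v_i$ and prepending $w$ yields a red $P_{2n+1}$ of the form $w v_i v_{i+1} \cdots v_{i-1}$. Otherwise every edge of $G$ between $W$ and $V(C)$ is blue. Because consecutive vertices of $C$ lie in different parts, $|V(C) \cap V_j| \leq n$ for each part $V_j$, so $|V(C) \setminus V_j| \geq n$ for every $j$.

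To build a blue $P_{2n+1}$ in this subcase I would alternate between $W$ and $V(C)$ in blue. If $W \subseteq V_j$, a short computation using (\ref{jj2}) shows that the degenerate configuration $|V(C) \cap V_j| = n$ together with $|W| = n$ cannot occur for $n$ large, so either $|V(C) \setminus V_j| \geq n+1$ or $|W| \geq n+1$, which suffices to form an alternating blue path $c_0, w_1, c_1, w_2, \ldots, w_n, c_n$ on $2n+1$ vertices. If $W$ meets more than one part, even more blue edges are available, making the construction easier. The main obstacle is the exceptional regime, where Theorem \ref{tC2n} is not directly available; there I would apply Theorem \ref{tP2n} to obtain a monochromatic $P_{2n}$ and perform a targeted one-vertex extension using the rigid near-bipartite structure (for instance $K_{2n-1,2n-2,1}$ or $K_{2n-2,2n-2,1,1}$), invoking if necessary the stability theorem on monochromatic connected matchings referenced in the abstract.
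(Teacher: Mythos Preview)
Your overall plan matches the paper's: obtain a monochromatic $C_{2n}$, then either extend by a single same-colour edge into $W=V(G)\setminus V(C)$, or exploit that all $V(C)$--$W$ edges are blue and build an alternating blue path. The paper does exactly this after a minimality reduction, and your single-part analysis (including the impossibility of $|W|=|V(C)\cap V_j|=n$ via (\ref{jj2})) is correct.

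There are two genuine gaps. First, the exceptional regime. When (\ref{jm2}) fails (so $1\le N-n_1-n_2\le 2$ and $N\le 4n-2$), your fallback is to take a monochromatic $P_{2n}$ from Theorem~\ref{tP2n} and append one vertex. That extension is not automatic: a $P_{2n}$ in a near-extremal colouring may have both endpoints placed so that no correctly coloured extension exists, and invoking stability at that point does not by itself produce the extra vertex. The paper handles this regime with a dedicated lemma (Lemma~\ref{NoC2n}): tracing the proof of Theorem~\ref{tC2n}, the only way $C_{2n}$ can be absent is inside the nearly-bipartite subcase of Subsection~\ref{subsection:nearly-bipartite}, where one already has four blocks $X_{11},X_{12},X_{21},X_{22}\subseteq V_1\cup V_2$, each supporting long paths between arbitrary endpoints. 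A single vertex $x\in V_3$ then joins two such paths into a monochromatic $P_{2n+1}$ directly. This structural reuse is the missing idea.

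Second, your blue alternating path when $W$ meets several parts is asserted to be ``easier'' but is not. With $|W|=n$ and $|V(C)|=2n$ you need $n+1$ cycle vertices, and the constraint that each chosen cycle vertex avoid the parts of both its $W$-neighbours can fail under a naive greedy choice (you may exhaust $V(C)\setminus V_j$ for the dominant $j$ before finishing). The paper first performs a minimality reduction (so that, e.g., when $n_1\le n-1$ one has $s\le 5$ and $n_{s-1}+n_s\ge n+2$) and then runs a two-stage greedy procedure: choose connector vertices $w_0,\dots,w_{s'}\in V'$ separating the nonempty blocks $V''_i$, and then thread each $V''_{i+1}$ through $V'\setminus V_{i+1}$. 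Something of this sort is needed to make your multi-part case rigorous.
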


In the next section, we describe our main tools: the Szemer\'edi Regularity Lemma, connected matchings and theorems on the existence of Hamiltonian cycles in dense graphs. In Section~\ref{setupM} we set up and describe the structure of the proof of
Theorem~\ref{tC2n}, and in the next four sections we present this proof. In the last three sections we prove 
Theorems~\ref{tCgeq2n},~\ref{tP2n} and~\ref{tP2n+1}.

\section{Tools}
As in many recent papers on Ramsey numbers of paths (see~\cite{BLSSW,BS1, DK1,FL1,GRSS1,
GRSS2,KS1,LSS1,S1} and some references in them), our proof heavily uses the Szemer\'edi Regularity Lemma~\cite{Sz}
and  the idea of connected matchings in regular partitions of reduced graphs due to Figaj and  \L uczak~\cite{FL1}.

\subsection{Regularity}

We say that a pair $(V_1,V_2)$ of two disjoint vertex sets $V_1, V_2 \subseteq V(G)$ is $(\epsilon,G)${\em -regular} if 
\[
	\left|\frac{|E(X,Y)|}{|X||Y|} - \frac{|E(V_1,V_2)|}{|V_1||V_2|}\right| < \epsilon
\]
for all $X \subseteq V_1$ and $Y \subseteq V_2$ with $|X| > \epsilon |V_1|$ and $|Y| > \epsilon |V_2|$.

We use a $2$-color version of the Regularity Lemma, following Gy\'arf\'as, Ruszink\'o, S\'ark\"ozy, and Szemer\'edi~\cite{GRSS1}.

\begin{lemma}[$2$-color version of the Szemer\'edi Regularity Lemma]\label{regularity-lemma}
For every $\epsilon>0$ and integer $m>0$, there are positive integers $M$ and $n_0$ such that for $n \ge n_0$ the following holds. For all graphs $G_1$ and $G_2$ with $V(G_1) = V(G_2) = V$, $|V|=n$, there is a partition of $V$ into $L+1$ disjoint classes (clusters) $(V_0, V_1, V_2, \ldots, V_L)$ such that
\begin{itemize}
	\item $m \le L \le M$,
	\item $|V_1| = |V_2| = \ldots = |V_L|$,
	\item $|V_0| < \epsilon n$,
	\item Apart from at most $\epsilon \binom L2$ exceptional pairs, the pairs $\{V_i, V_j\}$ are $(\epsilon, G_s)$-regular for $s=1 \text{ and } 2$.
\end{itemize}
\end{lemma}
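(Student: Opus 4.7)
The plan is to adapt the classical density-increment argument used to prove the one-graph Szemer\'edi Regularity Lemma \cite{Sz}, replacing the usual mean-square density index by the sum of the indices computed separately for $G_1$ and $G_2$. The only substantive ingredient beyond the one-color proof is that two indices are tracked in parallel, so a single potential function controls termination.

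Recall that for a partition $\mathcal{P} = (V_1, \ldots, V_L)$ of $V$ and a graph $G$ on $V$, the mean-square density index is
\[
\mathrm{ind}_G(\mathcal{P}) \;=\; \sum_{1 \le i < j \le L} \frac{|V_i|\,|V_j|}{n^2}\, d_G(V_i, V_j)^2 \;\in\; [0,1],
\]
where $d_G(V_i, V_j)$ is the edge density of $G$ between $V_i$ and $V_j$. This quantity is monotone non-decreasing under refinement of $\mathcal{P}$, and Szemer\'edi's key lemma states that whenever more than $\epsilon\binom{L}{2}$ pairs of $\mathcal{P}$ fail to be $(\epsilon,G)$-regular, one can construct a refinement $\mathcal{P}'$ with $\mathrm{ind}_G(\mathcal{P}') \ge \mathrm{ind}_G(\mathcal{P}) + \delta(\epsilon)$, where $\delta(\epsilon)>0$ depends only on $\epsilon$, and with $|\mathcal{P}'| \le f(\epsilon,|\mathcal{P}|)$ for a controlled function $f$.

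For the two-color statement I would track the combined potential $\mathrm{ind}(\mathcal{P}) := \mathrm{ind}_{G_1}(\mathcal{P}) + \mathrm{ind}_{G_2}(\mathcal{P}) \in [0,2]$. Start from an arbitrary equipartition with $m$ parts and empty exceptional class, then iterate as follows. If more than $\epsilon\binom{L}{2}$ pairs fail to be $(\epsilon,G_s)$-regular for some $s\in\{1,2\}$, then by pigeonhole at least $\tfrac{\epsilon}{2}\binom{L}{2}$ of them fail for a single color $s$; apply Szemer\'edi's refinement construction in that color. By monotonicity, $\mathrm{ind}_{G_{3-s}}$ does not decrease, while $\mathrm{ind}_{G_s}$ grows by at least $\delta(\epsilon/2)$, so $\mathrm{ind}$ increases by that amount. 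Since $\mathrm{ind} \le 2$, the loop must halt within $2/\delta(\epsilon/2)$ steps. A standard cosmetic pass then equalizes cluster sizes by shunting at most $\epsilon n$ stragglers into $V_0$, yielding the desired partition with $L \le M$ for $M$ the tower-type bound obtained by iterating $f$ the required number of times; $n_0$ is chosen just large enough that each equalized cluster has positive integer size.

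The main point to verify, and the step I expect to require the most care, is that after a refinement targeted at $G_1$ the count of pairs violating $(\epsilon,G_2)$-regularity could in principle rise, so the termination criterion must be re-checked for both colors simultaneously at every step. This is handled cleanly by the combined-potential viewpoint: the choice of which color to refine in depends only on the current violation counts, and termination is forced by $\mathrm{ind} \le 2$ regardless of which color was refined in any given iteration. Everything else—the density-increment refinement lemma, monotonicity of $\mathrm{ind}_G$ under refinement, and the cosmetic equalization—is imported verbatim from the one-graph proof.
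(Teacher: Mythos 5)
The paper does not prove Lemma~\ref{regularity-lemma} at all: it is imported directly from Gy\'arf\'as, Ruszink\'o, S\'ark\"ozy and Szemer\'edi, with the citation supplying the proof. So there is no ``paper's route'' to compare against; what you have written is a sketch of the standard derivation, and it is essentially the canonical one.

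Your sketch is correct in its structure: track the summed mean-square index $\mathrm{ind}_{G_1}+\mathrm{ind}_{G_2}\in[0,2]$, use monotonicity of each $\mathrm{ind}_{G_s}$ under refinement (a Cauchy--Schwarz fact), and invoke Szemer\'edi's density-increment lemma on whichever color has many irregular pairs. Two small points deserve more care than you give them. First, your termination test is stated as ``more than $\epsilon\binom{L}{2}$ pairs fail to be $(\epsilon,G_s)$-regular for some $s$,'' but what the conclusion needs controlled is the number of pairs that fail for \emph{either} color; these coincide up to a factor of $2$, which is exactly where your pigeonhole step enters, but it should be phrased as bounding the union. Second, after pigeonholing you have $\tfrac{\epsilon}{2}\binom{L}{2}$ pairs failing $(\epsilon,G_s)$-regularity, not $(\epsilon/2,G_s)$-regularity, so the increment is governed by the one-color lemma in the regime ``a $c$-fraction of pairs fail $\epsilon$-regularity'' with $c=\epsilon/2$; writing $\delta(\epsilon/2)$ conflates the two parameters. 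The cleaner bookkeeping is to run the whole argument at level $\epsilon'=\epsilon/2$ from the start, guarantee fewer than $\epsilon'\binom{L}{2}$ irregular pairs for each color, and add. Neither issue affects correctness of the approach, only the exact form of the quantitative constants.
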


Additionally, if $G_1 \cup G_2$ is a multipartite graph with partition $V = V_1^* \cup V_2^* \cup \ldots \cup V_s^*$, with $s < 6$, we can guarantee that {\em each of the clusters $V_1, V_2, \ldots, V_L$ is contained entirely in a single part of this partition}.

To do so, for a given $\epsilon > 0$, we begin by arbitrarily partitioning each $V_i^*$ into parts $V_{i,1}^*, V_{i,2}^*, \ldots$, each of size $\lfloor \frac{\epsilon}{10}n\rfloor$, with a part $V_{i,0}^*$ of size at most $\frac{\epsilon}{10}n$ left over. This is an equitable partition of $V - \bigcup_{i=1}^k V_{i,0}^*$, a set of at least $(1 - \frac{9\epsilon}{10})n$ vertices. The  Regularity Lemma allows us to refine any equitable partition into one that satisfies the conclusions of Lemma~\ref{regularity-lemma}. Working with the subgraphs of $G_1$ and $G_2$ excluding the vertices in $\bigcup_{i=1}^k V_{i,0}^*$, take such a refinement with parameters $\frac{\epsilon}{9}$ and $m$, then add $\bigcup_{i=1}^k V_{i,0}^*$ to its exceptional cluster $V_0$. The resulting exceptional cluster still has size at most $\epsilon n$, so we have obtained a partition satisfying the conditions of Lemma~\ref{regularity-lemma} in which each of $V_1, V_2, \ldots, V_L$ is entirely contained in one of $V_1^*, V_2^*, \ldots, V_k^*$.

\subsection{Connected matchings}

Let $\alpha'(G)$ denote the size of a largest matching and $\alpha'_*(G)$ denote the size of a largest 
connected matching in $G$. Let $\alpha(G)$ denote the independence number and $\beta(G)$ denote the size of a smallest vertex cover in $G$.

Figaj and  \L uczak~\cite{FL1} were the first to use the fact that the existence of large connected matchings in the reduced graph of a regular partition of a large graph $G$ implies the existence of long paths and cycles in $G$. A flavor of it is illustrated by the following fact.

\begin{lemma}[Lemma 8 in~\cite{LSS1} and Lemma~1 in~\cite{KS1}]\label{L1}
Let a real number
$c > 0$ and a positive integer $k$
be given. If for every $\epsilon>0$ there exists a
$\delta>0$ and an $n_0$ such that for every even
$n > n_0$ and each graph $G$ with $v(G)>(1 +\epsilon) cn$ and
$e(G)\geq (1-\delta){v(G)\choose 2}$ and each $k$-edge-coloring of $G$
has a monochromatic  connected matching $M_{n/2}$, then for large $N$, $R_k(C_N)\leq(c+o(1))N$ (and hence
$R_k(P_N)\leq(c+o(1))N$).
\end{lemma}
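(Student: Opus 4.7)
The plan is to use the hypothesis as a black box inside the standard Figaj--\L uczak machinery that converts connected matchings in reduced graphs into long monochromatic cycles in the host graph. Fix $\gamma>0$ and set $N'=\lceil(c+\gamma)N\rceil$. For any $k$-edge-coloring $G_1,\ldots,G_k$ of $K_{N'}$, I would first apply the $k$-color Regularity Lemma (Lemma~\ref{regularity-lemma}) with some $\epsilon\ll\gamma$ and a sufficiently large $m$, obtaining clusters $V_0,V_1,\ldots,V_L$ of common size $t$ with $L$ bounded by an $\epsilon$-dependent constant. Form the reduced graph $R$ on $[L]$ by putting $ij\in E(R)$ whenever $(V_i,V_j)$ is $(\epsilon,G_s)$-regular for every color $s$; color $ij$ by a color whose density on $(V_i,V_j)$ is largest (in particular, at least $1/k$). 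A routine tally gives $e(R)\ge(1-\eta)\binom{L}{2}$ with $\eta=\eta(\epsilon)\to 0$ as $\epsilon\to 0$.

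Next, pick the largest even integer $n$ with $L>(1+\epsilon)cn$, and apply the hypothesis to $R$ with this $\epsilon$: provided $\epsilon$ was chosen small enough that $\eta<\delta(\epsilon)$, $R$ contains a monochromatic connected matching $M^*=\{e_1,\ldots,e_{n/2}\}$, say in red. Finally, I would turn $M^*$ into a long red cycle in $K_{N'}$. Since the $e_r$ all sit in a single red component of $R$, for each $r$ there exists a red walk $W_r$ in $R$ of length at most $L$ joining an endpoint of $e_r$ to an endpoint of $e_{r+1}$ (indices taken modulo $n/2$). Reserve a small constant number of vertices in every cluster visited by some $W_r$ to host a short red \emph{bridge} in $K_{N'}$; on each matched pair $(V_{i_r},V_{j_r})$, pass to a super-regular sub-pair of almost full size by the standard cleaning step, so that the pair admits a red Hamilton path between any two prescribed endpoints on the surviving vertices. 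Concatenating these Hamilton paths with the bridges produces a red cycle of length at least $(1-O(\sqrt\epsilon))\,nt\ge N$, which can be truncated to length exactly~$N$ by shortening one of the Hamilton paths.

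The main obstacle is that last step: producing a red Hamilton path with prescribed endpoints in each matched pair after a bounded set of vertices has been reserved. This is handled by the familiar ``clean and bridge'' argument --- first delete the few vertices of abnormally low red degree into the partner cluster so that the pair becomes super-regular, then invoke a Hamilton-connectivity theorem for super-regular bipartite pairs (see~\cite{L1,FL1}). All loss terms --- the exceptional cluster $V_0$, the reserved bridge vertices, the cleaned vertices, and any parity adjustment --- are each $O(\sqrt\epsilon)N$ or smaller, and because $N'=(c+\gamma)N$ carries a built-in slack $\gamma$ with $\epsilon\ll\gamma$, the final count reduces to $(c+\gamma)/((1+\epsilon)c)>1$ with room to spare. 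Choosing, for example, $\epsilon<\gamma/(3c)$ makes the arithmetic routine and completes the proof.
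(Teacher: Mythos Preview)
The paper does not prove this lemma; it is quoted from~\cite{LSS1} and~\cite{KS1} as a black-box tool, so there is no in-paper argument to compare against. Your sketch is the standard Figaj--\L uczak pipeline (regularity, reduced graph, connected matching, bridges plus super-regular Hamilton paths), and the paper itself carries out essentially the same construction in Section~\ref{regularity} when it converts a large connected matching in the reduced graph into a $C_{2n}$. In that sense your outline matches both the cited sources and the paper's own use of the idea.

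One point to tighten: your closing line ``truncated to length exactly~$N$ by shortening one of the Hamilton paths'' needs a parity argument. Each bipartite Hamilton path across a matched pair has odd length, and the bridge paths have lengths dictated by the bipartition of the red component of $R$; so the cycle lengths you can hit step by~$2$. For even~$N$ this is harmless, but for odd~$N$ you must argue that the red component of $R$ is non-bipartite (or handle it separately), exactly as the paper does in the paragraph beginning ``Moreover, we may choose the paths such that their total length has the same parity as~$|\mathcal M|$'' in Section~5.2. With that caveat addressed, the sketch is sound.
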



We use the following property of $(\epsilon,G)$-regular pairs:  

\begin{lemma}[Lemma 3 in~\cite{GRSS1}]\label{srp}
\label{super-regular-pair}
For every $\delta>0$ there exist  $\epsilon>0$ and $t_0$ such that the following holds. Let $G$ be a bipartite graph with bipartition $(V_1, V_2)$ such that $|V_1|=|V_2| = t\ge t_0$, and let the pair $(V_1,V_2)$ be $(\epsilon,G)$-regular. Moreover, assume that $\deg_G(v) > \delta t$ for all $v \in V(G)$. 

Then for every pair of vertices $v_1 \in V_1, v_2 \in V_2$, $G$ contains a Hamiltonian path with endpoints $v_1$ and $v_2$.
\end{lemma}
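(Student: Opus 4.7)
The plan is to prove Lemma~\ref{srp} via Pósa's rotation-extension technique, choosing $\epsilon$ sufficiently small in terms of $\delta$ (say $\epsilon \le \delta^3$) and $t_0$ sufficiently large. First I would extract the quantitative consequences of $(\epsilon,G)$-regularity combined with the minimum degree condition: the density satisfies $d \ge \delta - \epsilon$; every vertex has at least $\delta t$ neighbors; and for any $A \subseteq V_1$, $B \subseteq V_2$ with $|A|,|B| \ge \sqrt{\epsilon}\, t$, the bipartite subgraph $G[A,B]$ has density within $\epsilon$ of $d$ and hence contains many edges. A convenient corollary: for any set $S$ of size $\ge \sqrt{\epsilon}\, t$ in one part, all but $\sqrt{\epsilon}\, t$ vertices of the opposite part have at least $(\delta/2)|S|$ neighbors in $S$.

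Next, take a longest path $P = x_0 x_1 \ldots x_\ell$ in $G$ with $x_0 = v_1$, and assume for contradiction that $\ell < 2t-1$ or $x_\ell \ne v_2$. Since $P$ is maximal, all neighbors of $x_\ell$ lie on $P$; in particular $x_\ell$ has at least $\delta t$ on-path neighbors. For each such neighbor $x_i$, Pósa's rotation produces a new $v_1$-path with the same vertex set and new endpoint $x_{i+1}$, which by bipartiteness sits in the same part as $x_\ell$. Iterating rotations yields a set $E$ of reachable endpoints; the standard Pósa bootstrapping, combined with the regularity-based degree lower bound applied at each round, shows that after a bounded number of rounds $|E| \ge (1 - \sqrt{\epsilon})t$ in the appropriate part.

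Once $|E|$ is this large, the argument finishes by combining it with the target. If $V(P) \ne V(G)$, any $y \notin V(P)$ has $\ge \delta t \gg \sqrt{\epsilon}\, t$ neighbors and so meets $E$, yielding a path longer than $P$ from $v_1$ and contradicting maximality; hence $P$ is a Hamilton path. To force its endpoint to be $v_2$, apply a symmetric rotation procedure anchored at $v_2$ to obtain a large reachable set $E'$ from $v_2$; since $E$ and $E'$ are large subsets of the appropriate parts, super-regularity guarantees an edge between them, and the usual rerouting converts the two half-paths into a Hamilton path from $v_1$ to $v_2$. The main obstacle is ensuring the reachable endpoint set $E$ does in fact grow to nearly all of the relevant part: this requires verifying that throughout the rotation process, newly produced endpoints continue to have large neighborhoods outside the ``already reached'' set, which is the usual technical heart of Pósa-type arguments and is controlled by the observation that $\epsilon$-regularity prevents exceptional small-neighborhood sets from appearing too often.
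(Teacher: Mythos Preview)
This statement is not proved in the paper at all: it is quoted as Lemma~3 of~\cite{GRSS1} and used as a black-box tool, so there is no in-paper proof to compare your attempt against.

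Your rotation--extension plan is the natural approach, and the first two phases (extending a longest $v_1$-rooted path to a spanning path, and using $\epsilon$-regularity plus the minimum-degree bound to push the reachable-endpoint set $E\subseteq V_2$ up to size $(1-\sqrt\epsilon)t$) are sound. The gap is the final paragraph. You propose to run a second, independent rotation ``anchored at $v_2$'' to obtain a large $E'\subseteq V_1$, and then use an edge $ee'$ between $E$ and $E'$ to ``reroute the two half-paths''. But $e\in E$ comes with a Hamilton path $v_1\to e$ and $e'\in E'$ comes with a \emph{different} Hamilton path $v_2\to e'$; each already spans $V(G)$, so splicing along $ee'$ would double-cover every vertex and cannot produce a single Hamilton $v_1$--$v_2$ path. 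The two-ended rotation you may be recalling is the device for building Hamilton \emph{cycles}, where both endpoints of a \emph{single} path are rotated freely; here one end is pinned at $v_1$, and that maneuver is unavailable.

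What is actually required is to show $E=V_2$ outright, so that $v_2\in E$ directly. This is exactly the step you label ``the usual technical heart'' and then do not carry out: one must argue that a hypothetical $b\in V_2\setminus E$ would, across all rotated paths $P_e$, force its path-predecessors to avoid neighbours of $e$ in a way that eventually contradicts $\deg(b)>\delta t$ together with $\epsilon$-regularity. Your sketch replaces this analysis with the splicing shortcut, which does not work.
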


Since we are aiming at an exact bound, we need a stability version of a result similar to Lemma~\ref{L1}. To state it, we need some
 definitions.

\begin{defn}\label{suitable}
For $\epsilon>0$, an $N$-vertex $s$-partite graph $G$ with parts $V_1,\ldots,V_s$ of sizes $n_1\geq n_2\geq\ldots\geq n_s$, and a 2-edge-coloring $E = E_1 \cup E_2$, is $(n,s,\epsilon)$-{\em suitable} if the following conditions hold:
\begin{equation}\label{suit-s1}\tag{S1}
	N=n_1+\ldots +n_s\geq 3n-1,
\end{equation}
\begin{equation}\label{suit-s2}\tag{S2}
	n_2+n_3+\ldots +n_s\geq 2n-1,
\end{equation}
and if $\widetilde{V}_i$ is the set of vertices in $V_i$ of degree at most $N-\epsilon n-n_i$ and $\widetilde{V}=\bigcup_{i=1}^s \widetilde{V}_i$, then 
\begin{equation}\label{suit-s3}\tag{S3}
	|\widetilde{V}|=|\widetilde{V}_1|+\ldots +|\widetilde{V}_s|<\epsilon n.
\end{equation}
We do not require $E_1 \cap E_2 = \emptyset$; an edge can have one or both colors. We write $G_i = G[E_i]$ for $i=1,2$.
\end{defn}

Our stability theorem gives a partition of the vertices of near-extremal graphs called a $(\lambda,i,j)$-{\em bad partition}. There are two types of bad partitions.

\begin{defn}
For $i\in \{1,2\}$ and $\lambda > 0$, a partition $V(G)=W_1\cup W_2$ of $V(G)$ is $(\lambda,i,1)$-{\em bad} if the following holds:
\begin{enumerate}
\item[(i)] $(1-\lambda)n \le |W_2| \le (1+\lambda)n_1$;
\item[(ii)] $|E(G_i[W_1,W_2])| \le \lambda n^2$;
\item[(iii)] $|E(G_{3-i}[W_1])| \le \lambda n^2$.
\end{enumerate}
\end{defn}

\begin{defn}
For $i\in \{1,2\}$ and $\lambda > 0$, a partition $V(G)=V_j\cup U_1\cup U_2$, $j \in [s]$, of $V(G)$ is $(\lambda,i,2)$-{bad} if the following holds:
\begin{enumerate}
\item[(i)] $|E(G_i[V_j,U_1])| \le \lambda n^2$;
\item[(ii)] $|E(G_{3-i}[V_j,U_2])| \le \lambda n^2$;
\item[(iii)] $n_j=|V_j| \ge (1-\lambda)n$;
\item[(iv)] $(1-\lambda)n \le |U_1| \le (1+\lambda)n$;
\item[(v)] $(1-\lambda)n \le |U_2| \le (1+\lambda)n$.
\end{enumerate}
\end{defn}

 Our stability theorem~\cite{BKLL1} is:

\begin{theorem}[Theorem~9~\cite{BKLL1}]\label{t1}
Let $0<\epsilon< 10^{-3}\gamma<10^{-6}$, $n\geq s\geq 2$, $n > \frac{100}{\gamma }$. Let $G$ be  an
 $(n,s,\epsilon)$-{suitable}  graph. If $\max\{\alpha'_*(G_1), \alpha'_*(G_2)\}\le n(1+\gamma)$, then for some $i\in [2]$ and $j \in [2]$, $V(G)$ has a $(64\gamma,i,j)$-bad partition.
\end{theorem}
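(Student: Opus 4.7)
The plan is to prove Theorem~\ref{t1} by extracting a rough vertex-cover decomposition from the connected-matching bound on each color class and then using suitability to snap that decomposition onto one of the two prescribed bad-partition shapes. For each color $i\in\{1,2\}$, I would let $C_i^*$ be a component of $G_i$ with $\alpha'(G_i[C_i^*])=\alpha'_*(G_i)\le(1+\gamma)n$, and take a minimum vertex cover $U_i\subseteq V(C_i^*)$ of $G_i[C_i^*]$; since $\beta(H)\le 2\alpha'(H)$ in any graph, $|U_i|\le 2(1+\gamma)n$, and $V(C_i^*)\setminus U_i$ is an independent set in $G_i$. By \eqref{suit-s3}, fewer than $\epsilon n$ vertices have low degree, so an elementary counting argument (together with the matching bound on $G_i$) shows that all but $O(\epsilon n)$ vertices of $V(G)$ lie in $C_i^*$: otherwise there would be a large second component contributing additional matching edges. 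Hence $U_i$ \emph{almost covers} all of $G_i$ globally, not just inside $C_i^*$.

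Next I would case-split on the intersection pattern of the two ``independent shells'' $I_i:=V(G)\setminus U_i$. In the \emph{balanced} case, both $|U_i|$ are close to $2n$ and both $|I_i|$ are close to $n$; since every $G$-edge between $I_1$ and $I_2$ either lies in $G_1$ or $G_2$ and each $I_i$ is independent in $G_i$, such edges must lie simultaneously in $G_{3-i}$ for each $i$, which collides with suitability unless $I_1\cap I_2$ is almost exactly a single part $V_j$ with $|V_j|\approx n$. In that scenario the remaining two blocks of $V(G)$ partition into $U_1\cap U_2$ and $U_1\triangle U_2$ of size roughly $n$ each, yielding a $(O(\gamma),i,2)$-bad partition. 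In the \emph{unbalanced} case, one shell, say $|I_1|$, approaches $n_1$; then most $G$-edges from $I_1$ to $V(G)\setminus I_1$ lie in $G_2$, and the connected-matching bound on $G_2$ together with \eqref{suit-s1}--\eqref{suit-s2} pins the partition $(V(G)\setminus I_1,\,I_1)$ down to a $(O(\gamma),1,1)$-bad partition after small corrections.

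The principal obstacle is that the multipartite constraint interacts subtly with the matching bound: vertices in the same part contribute no $G$-edges, so the ``vertex-cover almost covers $G_i$'' step must cleanly separate intra-part from inter-part contributions before the shells $I_i$ can be related to the parts $V_1,\ldots,V_s$. A related subtlety is snapping the approximate two- or three-block structure onto a \emph{bad} partition that precisely meets the size constraints~(i)--(v); this requires repeated use of \eqref{suit-s1}--\eqref{suit-s2} to identify which approximate block coincides with which part $V_j$ and to discard alternatives. The generous slack in the target constant $64\gamma$ (as opposed to $\gamma$) should absorb the cumulative error across these cleaning steps, so the case analysis, although lengthy, should close without a finer combinatorial input.
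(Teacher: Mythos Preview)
The paper does not prove Theorem~\ref{t1}: it is quoted verbatim as Theorem~9 of the companion paper~\cite{BKLL1} and used as a black-box stability tool. There is therefore no ``paper's own proof'' to compare against; the full argument lives in~\cite{BKLL1}.

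That said, your sketch has real gaps that would need to be closed before it could stand as an independent proof. First, the step ``all but $O(\epsilon n)$ vertices lie in $C_i^*$, otherwise there would be a large second component contributing additional matching edges'' is not justified by the hypothesis: $\alpha'_*(G_i)$ bounds the matching in a \emph{single} component, so a second large component of $G_i$ with its own large matching does not violate $\alpha'_*(G_i)\le(1+\gamma)n$. You need a genuinely two-color argument here (a large second $G_i$-component forces structure in $G_{3-i}$), not a one-color counting claim. Second, and more seriously, your ``balanced case'' reasoning conflates edges \emph{within} a shell with edges \emph{between} shells. The set $I_i=V(G)\setminus U_i$ is (approximately) independent in $G_i$, so $G_i$-edges with \emph{both} endpoints in $I_i$ are rare; but an edge from $I_1$ to $I_2$ can perfectly well be a $G_1$-edge, since its $I_2$-endpoint may lie in $U_1$. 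Your conclusion that such edges ``must lie simultaneously in $G_{3-i}$ for each $i$'' is only valid for edges inside $I_1\cap I_2$, and the subsequent deduction that $I_1\cap I_2$ is essentially a single part $V_j$ does not follow from what you have written. The actual stability argument requires a more careful interaction between the two color structures than an independent analysis of each $U_i$ followed by an intersection-pattern case split; the slack $64\gamma$ is generous, but it does not by itself supply the missing structural link.
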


\subsection{Theorems on Hamiltonian cycles in bipartite graphs}



\begin{theorem}[Chv\'atal~\cite{CHV},  see also Corollary 5 in Chapter 10 in~\cite{BGG}]\label{chvatal}
Let $H$ be a $2n$-vertex bipartite graph with vertices $u_1, u_2, \ldots, u_n$ on one side and $v_1, v_2, \ldots, v_n$ on the other, such that $d(u_1) \le \ldots \le d(u_n)$ and $d(v_1) \le \ldots \le d(v_n)$. 

If $d_H(u_i) \le i < n \implies d_H(v_{n-i}) \ge n-i+1$, then $H$ is Hamiltonian.
\end{theorem}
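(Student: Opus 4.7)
My plan is to adapt Chv\'atal's classical proof of the non-bipartite version. I would argue by contradiction: assume $H$ satisfies the hypothesis but is not Hamiltonian. Since the hypothesis is preserved when any vertex's degree grows (the set of vertices with degree $\leq i$ can only shrink), I would first replace $H$ by an edge-maximal non-Hamiltonian bipartite super-graph on the same bipartition $U \cup V$. In this maximal graph, adding any missing edge $xy$ (with $x \in U$, $y \in V$) creates a Hamiltonian cycle through $xy$, so $H$ contains a Hamiltonian $x$--$y$ path for every such non-edge.

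Next, I would choose a non-adjacent pair $u \in U$, $v \in V$ maximizing $d(u) + d(v)$, and label the Hamiltonian $u$--$v$ path as $z_1 z_2 \cdots z_{2n}$ with $z_1 = u$, $z_{2n} = v$, so odd-indexed vertices lie in $U$ and even-indexed in $V$. The core step is the standard rotation argument: if both $u z_{2k} \in E(H)$ and $z_{2k-1} v \in E(H)$ held for some $k$, then
\[
	z_1\, z_2\, \cdots\, z_{2k-1}\, z_{2n}\, z_{2n-1}\, \cdots\, z_{2k}\, z_1
\]
would be a Hamiltonian cycle in $H$, a contradiction. Hence the map $z_{2k} \mapsto z_{2k-1}$ injects $N_H(u)$ into $U \setminus N_H(v)$, yielding $d(u) + d(v) \leq n$.

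Writing $i := d(u)$, so $d(v) \leq n - i$, I would then leverage the maximality of $d(u)+d(v)$ over non-adjacent pairs: every non-neighbor of $u$ in $V$ has degree at most $d(v) \leq n - i$, and there are $n - i$ such non-neighbors, so $d(v_{n-i}) \leq n - i$. Symmetrically, all $n - d(v) \geq i$ non-neighbors of $v$ in $U$ have degree at most $d(u) = i$, giving $d(u_i) \leq i$. Since $uv \notin E(H)$ forces $i < n$, the Chv\'atal hypothesis applied at index $i$ then gives $d(v_{n-i}) \geq n - i + 1$, contradicting the bound $d(v_{n-i}) \leq n - i$ just obtained.

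The step I expect to require the most care is the rotation construction — one must check that the rerouted cycle respects the bipartition and uses each of the $2n$ vertices exactly once, and that boundary values of $k$ (notably $k=1$, where $z_{2k-1}=u$ is automatically a non-neighbor of $v$) cause no degeneracy. Everything else is a direct transcription of Chv\'atal's original argument, so no genuinely new ideas beyond the classical approach are required.
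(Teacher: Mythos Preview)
Your proof is correct and follows the standard Chv\'atal closure argument adapted to the bipartite setting. However, the paper does not prove this theorem at all: it is quoted without proof as a known tool (attributed to Chv\'atal~\cite{CHV} and Berge~\cite{BGG}) alongside the companion results of Berge and Las~Vergnas, and is then simply invoked in the later sections. So there is no ``paper's own proof'' to compare against; your argument supplies exactly the classical proof the citations point to.
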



\begin{theorem}[Berge~\cite{BGG}]\label{berge}
Let $H$ be a $2m$-vertex bipartite graph with vertices $u_1, u_2, \ldots, u_m$ on one side and $v_1, v_2, \ldots, v_m$ on the other, such that $d(u_1) \le \ldots \le d(u_m)$ and $d(v_1) \le \ldots \le d(v_m)$. Suppose that for the smallest two indices $i$ and $j$ such that $d(u_i) \le i+1$ and $d(v_j) \le j+1$, we have $d(u_i) + d(v_j) \ge m+2$.

Then $H$ is Hamiltonian bi-connected: for every $i$ and $j$, there is a Hamiltonian path with endpoints $u_i$ and $v_j$.
\end{theorem}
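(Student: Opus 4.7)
My plan is to reduce the claim to Chv\'atal's bipartite Hamiltonicity criterion (Theorem~\ref{chvatal}) by means of a small gadget. Fix an arbitrary target pair $u_a, v_b$ on opposite sides of $H$; I want a Hamiltonian $u_a$-$v_b$ path. Form an auxiliary bipartite graph $H^+$ by adjoining one new vertex $u^{\ast}$ to the $u$-side and one new vertex $v^{\ast}$ to the $v$-side, together with exactly the three edges $u^{\ast}v^{\ast}$, $u^{\ast}v_b$, $v^{\ast}u_a$. Both $u^{\ast}$ and $v^{\ast}$ have degree $2$ in $H^+$, so every Hamilton cycle of $H^+$ is forced to contain the four-vertex path $v_b, u^{\ast}, v^{\ast}, u_a$; deleting $u^{\ast}$ and $v^{\ast}$ from such a cycle produces a Hamiltonian $u_a$-$v_b$ path in $H$. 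It therefore suffices to show $H^+$ is Hamiltonian, which I will obtain from Theorem~\ref{chvatal} applied with $n = m+1$.

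To verify Chv\'atal's implication ``$d(u_k) \le k \Rightarrow d(v_{n-k}) \ge n-k+1$'' for the sorted degree sequences of $H^+$, observe that on the $u$-side the sequence of $H^+$ is obtained from $d_H(u_1), \ldots, d_H(u_m)$ by increasing the entry at position $a$ by $1$ and inserting a new entry equal to $2$, and symmetrically on the $v$-side. Let $\hat\imath$ and $\hat\jmath$ be the critical indices from the hypothesis, so that $d_H(u_k) \ge k+2$ for $k < \hat\imath$ and $d_H(v_k) \ge k+2$ for $k < \hat\jmath$, while $d_H(u_{\hat\imath}) + d_H(v_{\hat\jmath}) \ge m+2$. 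For indices $k \le \hat\imath$ the inequality is automatic: the minimality of $\hat\imath$ forces every original entry to stay strictly larger than $k$, while the inserted entry of value $2$ only consumes at most one slot among the small entries. At the critical index, the premise of the implication first becomes active, and the required conclusion on the $v$-side reads ``at most $m-\hat\imath-1$ entries of the $v$-side are $\le m-\hat\imath$''; this is exactly the content of $d_H(v_{\hat\jmath}) \ge m+2-d_H(u_{\hat\imath}) \ge m-\hat\imath+1$ combined with the minimality of $\hat\jmath$, which controls all smaller entries on the $v$-side. For indices past the critical one, monotonicity of the sorted sequence finishes the job.

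The main obstacle is the careful bookkeeping of how the two modifications to each sorted sequence (the inserted $2$ and the $+1$ bump at position $a$, respectively $b$) shift indices in $H^+$ relative to those in $H$. In particular, whether $d_H(u_a) \le \hat\imath+1$ determines whether the bump moves $u_a$ across the critical index and thereby changes the count of small entries by one, and the same subtlety occurs on the $v$-side. I would handle this by an explicit case split on whether $d_H(u_a) \le \hat\imath+1$ and whether $d_H(v_b) \le \hat\jmath+1$, and in each case track the positions of the inserted and bumped entries to recheck Chv\'atal's implication. In every case the degree-sum hypothesis $d_H(u_{\hat\imath}) + d_H(v_{\hat\jmath}) \ge m+2$ supplies exactly the slack needed at the critical index, and monotonicity disposes of the remaining indices; Theorem~\ref{chvatal} then yields a Hamilton cycle in $H^+$, and the gadget produces the desired Hamilton $u_a$-$v_b$ path in $H$.
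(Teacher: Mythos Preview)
The paper does not prove Theorem~\ref{berge}; it is quoted from Berge's book~\cite{BGG} as an external tool and used throughout without proof. So there is nothing in the paper to compare your argument against.

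That said, your reduction is sound and is the standard way to derive such results from Chv\'atal's criterion: the degree-$2$ gadget correctly forces every Hamilton cycle of $H^+$ through the segment $v_b\,u^{\ast}\,v^{\ast}\,u_a$, so Hamiltonicity of $H^+$ yields a Hamiltonian $u_a$--$v_b$ path in $H$. Two remarks on your verification sketch. First, the case split on whether $d_H(u_a)\le\hat\imath+1$ (and symmetrically for $v_b$) is unnecessary: bumping one entry by $1$ can only raise each term of the sorted sequence, so for lower bounds on $d^+(u'_k)$ you may simply ignore the bump and argue with the multiset $\{2\}\cup\{d_H(u_1),\dots,d_H(u_m)\}$. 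Second, your appeal to ``monotonicity of the sorted sequence'' for indices $k>\hat\imath$ is not quite the mechanism that makes Chv\'atal's implication hold there. The point is rather that for such $k$ the $v$-side position $\ell=m+1-k$ satisfies $\ell\le m-\hat\imath\le\hat\jmath$, and this last inequality $\hat\imath+\hat\jmath\ge m$ is an immediate consequence of the hypothesis $d(u_{\hat\imath})+d(v_{\hat\jmath})\ge m+2$ combined with $d(u_{\hat\imath})\le\hat\imath+1$ and $d(v_{\hat\jmath})\le\hat\jmath+1$. Once $\ell\le\hat\jmath$, minimality of $\hat\jmath$ together with the inserted $2$ on the $v$-side gives $d^+(v'_\ell)\ge\ell+1=m+2-k$, which is precisely the Chv\'atal conclusion at index $k$. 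With these two observations the argument goes through cleanly for all $1\le k\le m$ without any case analysis.
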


\begin{theorem}[Las Vergnas~\cite{LV}, see also Theorem 11 on page 214 in~\cite{BGG}]\label{lasvergnas}
Let $H$ be a $2n$-vertex bipartite graph with vertices $u_1, u_2, \ldots, u_n$ on one side and $v_1, v_2, \ldots, v_n$ on the other, such that $d(u_1) \le \ldots \le d(u_n)$ and $d(v_1) \le \ldots \le d(v_n)$.  Let $q$ be an integer, $0 \le q \le n-1$.

If, whenever $u_i v_j \notin E(H)$, $d(u_i) \le i+q$, and $d(v_j) \le j+q$, we have
\[
	d(u_i) + d(v_j) \ge n + q + 1,
\]
then each set of $q$ edges that form vertex-disjoint paths is contained in a Hamiltonian cycle of $G$.
\end{theorem}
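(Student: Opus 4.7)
The plan is to mimic the classical closure-plus-rotation proof of Chv\'atal's theorem, adapted to bipartite graphs and to the task of extending a prescribed edge set, so that the argument interpolates between Theorem~\ref{chvatal} (the case $q=0$) and Theorem~\ref{berge} (where the prescribed edges are a matching at the endpoints).

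First I would argue by contradiction and reduce to an edge-maximal counterexample. Fix the $q$ edges $F$ forming vertex-disjoint paths and, among all graphs on the same bipartition satisfying the hypothesis but admitting no Hamilton cycle containing $F$, choose one $H^*$ with the maximum number of edges. The degree-sequence hypothesis is preserved by edge addition, so $H^*$ still satisfies it, and by maximality, for every non-edge $u_iv_j$ of $H^*$ there exists a Hamilton $u_i$--$v_j$ path in $H^*$ that still uses every edge of $F$ (otherwise $H^*+u_iv_j$ would be a larger graph with the same bad property). I would then pick such a non-edge with $d(u_i)\le i+q$ and $d(v_j)\le j+q$ (if no such non-edge exists, a Dirac-type argument finishes directly), so that $d(u_i)+d(v_j)\ge n+q+1$ by hypothesis.

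Next I would run a P\'osa-style rotation on a Hamilton $u_i$--$v_j$ path $P=w_1 w_2\ldots w_{2n}$ containing $F$, whose vertices alternate between the two parts. For each $w_k\in N(v_j)$ with $k$ odd, the rotation $w_1\ldots w_k w_{2n} w_{2n-1}\ldots w_{k+1}$ produces a new Hamilton $u_i$--$w_{k+1}$ path that still contains $F$, unless $w_kw_{k+1}\in F$; at most $q$ rotations can be blocked this way. Iterating, the sets of reachable endpoints $S\subseteq V$ at the $v_j$-end and $T\subseteq U$ at the $u_i$-end satisfy $|S|\ge d(v_j)-q$ and $|T|\ge d(u_i)-q$. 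Any edge of $H^*$ between $T$ and $S$ would close a Hamilton cycle containing $F$, contradicting the choice of $H^*$; hence no such edge exists. Counting edges from the low-degree vertices while exploiting the monotone orderings $d(u_1)\le\ldots\le d(u_n)$ and $d(v_1)\le\ldots\le d(v_n)$ produces a Chv\'atal-type inequality incompatible with $d(u_i)+d(v_j)\ge n+q+1$.

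The main obstacle will be the bookkeeping of the $q$ forced edges through the rotation step: each edge of $F$ can block at most one potential rotation on each end, which is exactly what promotes the classical threshold $n+1$ to $n+q+1$ in the hypothesis. Verifying that every rotated Hamilton path still contains all of $F$, and that the final concatenation into a Hamilton cycle preserves $F$, is the technical heart of the proof.
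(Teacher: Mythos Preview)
The paper does not prove this theorem: Theorem~\ref{lasvergnas} is quoted from Las~Vergnas~\cite{LV} (and Berge~\cite{BGG}) and used as a black-box tool in Sections~\ref{1bad}--\ref{k2n2n-1}. So there is no ``paper's own proof'' to compare against.

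That said, your sketch is on the right track but loose in two places. First, the line ``if no such non-edge exists, a Dirac-type argument finishes directly'' is not a throwaway: in an edge-maximal counterexample you must still produce a Hamilton cycle through $F$, and the vacuous hypothesis gives you nothing for free. The standard fix is not to hunt for a non-edge satisfying $d(u_i)\le i+q$ and $d(v_j)\le j+q$ a priori, but rather to take \emph{any} non-edge $u_iv_j$, obtain the Hamilton $u_i$--$v_j$ path $P$ containing $F$ by maximality, and then run the crossing count directly on $P$: if $u_i w_k\in E(H^*)$ and $w_{k-1}w_k\notin F$ then $w_{k-1}\notin N(v_j)$, else a Hamilton cycle through $F$ closes up. This already gives $d(u_i)+d(v_j)\le n+q$. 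One then shows, by choosing the non-edge with $d(u_i)+d(v_j)$ maximal and exploiting the monotone orderings, that the indices $i,j$ can be taken so that $d(u_i)\le i+q$ and $d(v_j)\le j+q$, at which point the hypothesis yields the contradiction. Second, your iterated P\'osa rotation with sets $S,T$ is more machinery than needed and, as written, the claim ``any edge of $H^*$ between $T$ and $S$ would close a Hamilton cycle containing $F$'' is not obviously true: rotations performed independently at the two ends need not be simultaneously compatible, and each rotation can destroy a path edge that a later rotation at the other end relies on. The single-path crossing argument above avoids this issue entirely and is how the result is proved in~\cite{BGG}.
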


\subsection{Using the tools}
Our strategy to prove Theorem~\ref{tC2n} is: We first apply a $2$-colored version of the Regularity Lemma to $G$ to obtain a reduced graph $G^r$. If $G^r$ has a large monochromatic connected matching then we find a long monochromatic cycle using  Lemma~\ref{L1}. If $G^r$ does not have a large monochromatic connected matching, then we use Theorem~\ref{t1}  to obtain a bad partition of $G^r$. We then transfer the bad partition of $G^r$ to a bad partition of $G$ and work with this partition. In some important cases, theorems on Hamiltonian cycles  help to find a  monochromatic cycle $C_{2n}$ in $G$.

\section{Setup of the proof of Theorem~\ref{tC2n}}\label{setupM}
Formally, we need to prove the theorem for every
 $N$-vertex complete $s$-partite graph $G$ with parts $(V_1^*, V_2^*,\ldots, V_s^*)$ such that the numbers $n_i=|V_i^*|$ satisfy $n_1 \ge n_2 \ge \ldots \ge n_s$ and the following three conditions:

\begin{enumerate}
\item[$(S1')$] $N = n_1 + \ldots + n_s \ge 3n-1$;

\item[$(S2')$]  $N-n_1 = n_2 + \ldots + n_s \ge 2n-1$;

\item[$(S3')$] If $N-n_1-n_2\leq 2$, then  $ N\geq 4n-1$.
\end{enumerate}

For a given  large $n$, we consider a possible counterexample with the minimum $N+s$. In view of this,
it is enough to consider the lists $(n_1,\ldots,n_s)$ satisfying $(S1')$, $(S2')$ and $(S3')$ such that

\begin{enumerate}
\item[(a)] for each $1\leq i\leq s$, if $n_i>n_{i+1}$, then the list $(n_1,\ldots,n_{i-1},n_i-1,n_{i+1},\ldots,n_s)$ does not satisfy some of  $(S1')$, $(S2')$ and $(S3')$;
\item[(b)] if $s\geq 4$, then the list $(n_1,\ldots,n_{s-2},n_{s-1}+n_s)$ (possibly with the entries rearranged into a non-increasing order) does not satisfy some of $(S1')$, $(S2')$ and $(S3')$.
\end{enumerate}

{\bf Case 1:} $N-n_1-n_2\geq 3$ and  $N>3n-1$. Then $(S3')$ holds by default. If  $n_1 > n_2$, 
 then the list $(n_1-1,n_2,n_3,\ldots,n_s)$  still satisfies the conditions $(S1')$, $(S2')$ and $(S3')$, a contradiction to~(a).
Hence  $n_1=n_2$. Choose the maximum $i$ such that $n_1=n_i$. If $N-n_1>2n-1$, consider 
the list $(n_1,\ldots,n_{i-1},n_i-1,n_{i+1},\ldots,n_s)$. In this case $(S1')$ and $(S2')$ still are satisfied for this list; so by (a), $(S3')$
fails for it. As we assumed $N - n_1 -n_2 \ge 3$, we must have $i \ge 3$ and $N - n_1 -n_2 = 3$ for $(S3')$ to fail for this list; this further implies $n_1 = n_i \le 3$, so $N = n_1 + n_2 + 3 \le 9$, a contradiction. Thus in this case $N-n_1=2n-1$.
Therefore, $n_1= N-(N-n_1)\geq 3n-(2n-1)=n+1$ and hence $n_2\geq n+1$, so $N-n_1-n_2\leq (2n-1)-(n+1)=n-2$.
Then the list $(n_1,n_1,N-2n_1)$ satisfies $(S1')$--$(S3')$.
Summarizing, we get
\begin{equation}\label{n-n1=2n-1}
\mbox{\em if $N-n_1-n_2\geq 3$ and $N>3n-1$, then}\quad s=3,\;  n_2+n_3 = 2n-1\quad \text{\em  and }\quad n_1 = n_2.
\end{equation}

{\bf Case 2:} $N-n_1-n_2\geq 3$ and  $N=3n-1$. Again $(S3')$ holds by default. By $(S2')$, $n_1\leq n$, hence $N-n_1-n_2\geq n-1$.
If $s\geq 4$ and $n_{s-1}+n_s\leq n$, then let $L$ be the list obtained from $(n_1,\ldots,n_s)$ by replacing the two entries $n_{s-1}$
and $n_s$ with $n_{s-1}+n_s$ and then possibly rearrange the entries into non-increasing order. By construction, $L$ satisfies
 $(S1')$--$(S3')$, a contradiction to~(b). Hence $n_{s-1}+n_s\geq n+1$. We also have $n_{s-1} + n_s \ge n+1$ if $s = 3$, since in this case $n_{s-1} + n_s = N - n_1 \ge 2n-1$. If $s\geq 6$, then
 $N\geq 3(n_{s-1}+n_s)\geq 3n+3$, contradicting  $N=3n-1$. Thus
\begin{equation}\label{N3n-1}
\mbox{\em if $N-n_1-n_2\geq 3$ and $N=3n-1$, then}\; n_1\leq n,\quad s\leq 5,\quad n_{s-1}+n_s \geq n+1.
\end{equation}

{\bf Case 3:} $N-n_1-n_2\leq 2$. Then $N\leq 2n_1+2$, so
by $(S3')$, $2n_1+2\geq N\geq 4n-1$, implying $n_1\geq 2n-1$. If $n_1\geq 2n$, then $(S2')$ implies that 
$G\supseteq K_{2n,2n-1}$. 
If $n_1= 2n-1$, then by $(S3')$, $N-n_1\geq 2n$, so again $G\supseteq K_{2n,2n-1}$. 
Thus we can assume that
\begin{equation}\label{N4n-2}
\mbox{\em if $N-n_1-n_2\leq 2$, then}\quad G= K_{2n,2n-1}.
\end{equation}

As we have seen, 
\begin{equation}\label{Se4}
\mbox{in each of Cases $1$, $2$ and $3$ we have $s \le 5$.}
\end{equation}


Fix an arbitrary $2$-edge-coloring $E(G)=E_1\cup E_2$ of $G$. For $i\in [2]$ and $v\in V(G)$, let 
 $G_i := (V(G), E_i)$ and $d_i(v)$ denote the degree of $v$ in $G_i$.

\section{Regularity}\label{regularity}

\subsection{Applying the $2$-colored version of the Regularity Lemma}


We first choose parameter $\alpha$ so that $0 < \alpha \ll 1$ and then choose $\epsilon$ such that $\epsilon < 10^{-15}$ and $0 < \epsilon \ll \alpha$ so that the pair $(\frac{\alpha}{2}, \epsilon)$ satisfies the relation of $(\delta, \epsilon)$ in Lemma~\ref{srp} with $\frac{\alpha}{2}$ playing the role of $\delta$. Here, $\epsilon$ is the parameter for the  Regularity Lemma, and $\alpha$ is our cutoff for the edge density at which we give an edge of the reduced graph a color.

We apply Lemma~\ref{regularity-lemma} to obtain a partition $(V_0, V_1, \ldots, V_L)$ of $V(G)$, with each of $V_1, V_2, \ldots, V_L$ contained entirely in one of $V_1^*, V_2^*, \ldots, V_k^*$. Define the $k$-partite \emph{reduced graph} $G^r$ as follows:
\begin{itemize}
\item The vertices of $G^r$ are $v_i$ for $i=1, 2, \ldots, L$. A $k$-partition $(V_1', V_2', \ldots, V_k')$ of $V(G^r)$ is induced by the $k$-partition of $G$, and reordered if necessary so that $|V_1'| \ge |V_2'| \ge \ldots \ge |V_k'|$.
\item There is an edge between $v_i$ and $v_j$ iff $v_i$ and $v_j$ are in different parts of the $k$-partition, and the pair $\{V_i, V_j\}$ is $(\epsilon,G_s)$-regular for both $s=1 \text{ and } s=2$.

\item The reduced graph $G^r$ is missing at most $\epsilon \binom L2$ edges between distinct pairs $\{V_i', V_j'\}$.

\item We give $G^r$ a $2$-edge-multicoloring: two graphs $(G_1^r, G_2^r)$ whose union include every edge of $G^r$, but are not necessarily edge-disjoint. We add edge $v_i v_j \in E(G^r)$ to $G_s^r$ if $G_s$ contains at least $\alpha |V_i||V_j|$ of the edges between $V_i$ and $V_j$. Since $G = G_1 \cup G_2$ contains all $|V_i||V_j|$ edges between $V_i$ and $V_j$, each edge of $G^r$ is added to either $G_1^r$ or $G_2^r$, and possibly to both.
\end{itemize}

Let $t = |V_1| = |V_2| = \ldots = |V_L|$, $\ell_i = |V_i'|$ for $i=1, \ldots, k$, and $\ell := \frac{n - \epsilon N}{t}$; since $N \le 4n-2$, we have $\ell t \ge (1-5\epsilon)n$.

Because $|V_0| \le \epsilon N$, we have $(1-\epsilon)N \le Lt \le N$ and $n_i - \epsilon N \le \ell_i t \le n_i$. Therefore,
\begin{itemize}
\item $Lt \ge (1-\epsilon)N \ge 3n-1 - \epsilon N = 3(\ell t + \epsilon N) - 1 - \epsilon N \ge 3 \ell t - 1 + 2\epsilon n$, which means $L \ge 3 \ell-1$.
\item $Lt - \ell_1t \ge N - n_1 - \epsilon N \ge 2n-1 - \epsilon N \ge 2(\ell t + \epsilon N)-1 - \epsilon N \ge 2 \ell t - 1 +\epsilon N$, which means $L - \ell_1 \ge 2 \ell-1$.
\end{itemize}

Since the number of $V_i$'s missing at least $\sqrt{\epsilon} L$ edges is less than $\sqrt{\epsilon} L$, $G^r$ is $(\ell, s, \sqrt{\epsilon})$-suitable. We apply Theorem~\ref{t1} to the graph $G^r$ with $\gamma$ such that $10^{-3} > \gamma \gg \alpha$ and $\gamma > 2000\sqrt{\epsilon}$. Then we conclude that either $G^r$ has a monochromatic connected matching of size $(1+\gamma)\ell$, or else $V(G)$ has a $(64\gamma, i,j)$-bad partition for some $i \in [2]$ and $j \in [2]$.

\subsection{Handling a large connected matching in the reduced graph}

For every edge $v_i v_j \in G_1^r$, the corresponding pair $(V_i,V_j)$ is $(\epsilon,G_1)$-regular and contains at least $\alpha t^2$ edges of $G_1$. Let $X_{ij} \subseteq V_i$ be the set of all vertices of $V_i$ with fewer than $\frac{\alpha}{2} t$ edges of $G_1$ to $V_j$, and let $Y_{ij} \subseteq V_j$ the set of all vertices of $V_j$ with fewer than $\frac{\alpha}{2}t$ edges of $G_1$ to $V_i$. Note we have $Y_{ij} = X_{ji}$ but we keep using the notation $Y_{ij}$ for emphasising they are in different parts. Then $\frac{|E(X_{ij}, V_j)|}{|X_{ij}||V_j|} \le \frac{\alpha}{2}$, so $|X_{ij}| \le \epsilon t$ to avoid violating $(\epsilon,G_1)$-regularity; similarly, $|Y_{ij}| \le \epsilon t$. Call vertices of $V_i \cup V_j$ which are not in $X_{ij} \cup Y_{ij}$ \emph{typical} for the pair $(V_i, V_j)$ (or for the edge $v_i v_j$ of $G_1$).

Let $\mathcal{M}$ be a connected matching in $G_1^r$ of size $(1+\gamma)\ell$. Give the edges in $\mathcal{M}$ an arbitrary cyclic ordering.

If $v_{i_1}v_{j_1}$ and $v_{i_2}v_{j_2}$ are edges of $\mathcal{M}$ which are consecutive in the ordering, we shall find a path $P(j_1, i_2)$ in $G_1$ joining a vertex of $V_{j_1} \setminus Y_{i_1j_1}$ to a vertex of $V_{i_2} \setminus X_{i_2j_2}$. To do so, we begin by finding a path $P^r$ from $v_{j_1}$ to $v_{i_2}$ in $G_1^r$, then find a realization of that path in $G_1$. Pick a starting point of $P(j_1, i_2)$ typical both for the edge $v_{i_1}v_{j_1}$ and for the first edge of $P^r$. Next, choose the path greedily, making sure to satisfy the following conditions:
\begin{itemize}
\item Choose a neighbor of the previous vertex chosen which is typical for the next edge of $P^r$ (or for $v_{i_2j_2}$ when we reach the end of $P^r$).
\item Choose a vertex which has not been chosen for any previous paths.
\end{itemize}
As we construct $P(j_1, i_2)$, the last vertex we have chosen is always typical for the edge of $P^r$ we are about to realize; therefore we have at least $\frac{\alpha}{2}t$ options for its neighbors. At most $\epsilon t$ of them are eliminated because they are not typical for the next edge, and at most $L^2$ are eliminated because they have been chosen for previous paths. Since $L$ is upper bounded by $M$ which is independent of $n$, and $\epsilon \ll \alpha$, we can always choose such a vertex.

Moreover, we may choose the paths such that their total length has the same parity as $|\mathcal{M}|$. If the component of $G_1^r$ containing $\mathcal{M}$ is not bipartite, then each path can be chosen to have any parity we like. If the component of $G_1^r$ containing $\mathcal{M}$ is bipartite, then this condition is satisfied automatically: if we join the paths of $P^r$ we chose by the edges of $\mathcal{M}$, we get a closed walk, which must have even length.

Once all these paths are chosen, we combine them into a long even cycle in $G_1$. For each edge $v_i v_j$ in the matching $\mathcal{M}$, we have vertices $x \in V_i$ and $y \in V_j$, both typical for $(V_i, V_j)$, which are the endpoints of two paths we have constructed. We show that we can find a path from $x$ to $y$ using only edges of $G_1$ between $V_i$ and $V_j$ of any odd length between $t-1$ and $(1-3\epsilon)2t-1$. 

To do so, we choose any $X \subseteq V_i$ with $|X| \ge \frac t2$ that contains $x$ and at least $\frac{\alpha}{2}t$ neighbors of $y$; similarly, we choose $Y \subseteq V_j$ with $|Y| = |X|$ that contains $y$ and at least $\frac{\alpha}{2}t$ neighbors of $x$. If we want the path to have length $2Ct - 1$ where $C \in [\frac12, 1 - 3\epsilon]$, we begin by choosing $X$ and $Y$ of size $(C+3\epsilon)t$. The pair $(X,Y)$ is $(2\epsilon,G_1)$-regular with density at least $\alpha-\epsilon$, so there are at most $2\epsilon$ vertices in each of $X$ and $Y$ which have fewer than $\frac{\alpha}{2}t$ neighbors on the other side; by our construction of $X$ and $Y$, $x$ and $y$ are not among them. 

Let $X' \subseteq X$ and $Y' \subseteq Y$ be the subsets obtained by deleting these low-degree vertices, leaving at least $(C+\epsilon)t$ vertices on each side, and then deleting enough vertices from each part to make $|X'| = |Y'| = Ct$. The pair $(X', Y')$ is $(3\epsilon, G_1)$-regular, and all vertices have minimum degree at least $(\alpha -3\epsilon)t$, so by Lemma~\ref{super-regular-pair}, there is a path from $x$ to $y$ using all vertices of $X'$ and $Y'$, which has the desired length $2Ct-1$.

If we use $C = 1-3\epsilon$ for each edge $v_i v_j$ in the matching $\mathcal{M}$, then the cycle contains at least $2(1-3\epsilon)t$ vertices for each edge of $\mathcal{M}$, even ignoring the paths we constructed between them, while $|\mathcal{M}| \ge (1+10\epsilon)\ell$; therefore the total length is at least
\[
	2(1-3\epsilon)(1 + 10\epsilon)\ell t \ge 2 (1-3\epsilon)(1+10\epsilon)(1-5\epsilon)n \ge (1+\epsilon)2n.
\]
If we use $C = \frac12$ each edge $v_iv_j$, then the cycle contains only $t$ vertices for each edge of $\mathcal{M}$, giving approximately half as many edges. Up to parity, we are free to choose any length in this range, and therefore it is possible to construct a path in $G_1$ of length exactly $2n$.

\subsection{Handling a bad partition of the reduced graph}
We will show in Sections~\ref{1bad} and~\ref{2bad} how to find a long monochromatic cycle in a bad partition of $G$. In this subsection, we show that a bad partition of $G^r$ corresponds to a bad partition of $G$. 

\begin{enumerate}
\item If $X \subseteq V(G^r)$ has size $C\ell$, then the corresponding set of vertices in $G$ is
$\quad
	\bigcup_{v_i \in X} V_i.$\\
It has size $C\ell t$, which is in the range $[(1-5\epsilon)Cn,Cn]$.

\item If $|E_{G_i^r}(X)| \le \lambda \ell^2$, then each of those $\lambda \ell^2$ edges of $G_i^r$ corresponds to at most $t^2$ edges of $G_i$, for $\lambda \ell^2 t^2 \le \lambda n^2$ edges. 

Additionally, edges not in $G_i^r$ may appear in $G_i$; across all of $G_i$ there are at most $\alpha t^2 \binom{L}{2} \le \frac12 \alpha N^2 \le 10\alpha n^2$ edges that occur in this way.

Moreover, edges from at most $\epsilon \binom L2$ exceptional pairs may appear in $G_i$, contributing at most $10\epsilon n^2$ edges in total by the same calculation.

To summarize, there are at most $(\lambda + 10\alpha + 10\epsilon)n^2$ edges in $G_i$ corresponding to $E_{G_i^r}(X)$. A similar argument applies to a bound on $|E_{G_i^r}(X,Y)|$ for $X,Y \subseteq V(G^r)$.

\item There are fewer than $\epsilon N \le 5\epsilon n$ vertices from the exceptional part $V_0$, which can generally be assigned to any part of any bad partition without changing the approximate structure.
\end{enumerate}

Thus, for $1 \gg \lambda \gg \alpha \gg \epsilon \gg 0$, if $G^r$ has a $(\lambda, i, 1)$-bad partition ($i \in [2]$) $V(G^r) = W_1^r \cup W_2^r$, then $G$ has a corresponding $(2\lambda, i, 1)$-bad partition with 

\textbf{(0):} $$W_1 := \left( \bigcup_{v_i \in W_1^r} V_i \right) \cup V_0 \text{ and } W_2 := \bigcup_{v_i \in W_2^r} V_i .$$

\textbf{(i):} $$(1-2\lambda)n \le (1-\lambda)(1-5\epsilon)n \le (1-\lambda)\ell t \le |W_2| \le (1+\lambda)\ell_1 t \le (1+\lambda)n_1.$$

\textbf{(ii):} $$|E(G_i[W_1, W_2])| \le (\lambda + 10\alpha + 10\epsilon + 5 \epsilon)n^2 \le 2 \lambda n^2.$$

\textbf{(iii):} $$|E(G_{3-i}[W_1])| \le (\lambda + 10\alpha + 10\epsilon + 5 \epsilon)n^2 \le 2 \lambda n^2.$$

If $G^r$ has a $(\lambda, i, 2)$-bad partition ($i \in [2]$) $V(G^r) = V_j' \cup U_1^r \cup U_2^r$ then $G$ has a corresponding $(2\lambda, i, 2)$-bad partition with 

\textbf{(0):} $$U_1 := \bigcup_{v_i \in U_1^r} V_i \cup (V_0-V^*_j) \text{ and } U_2 := \bigcup_{v_i \in U_2^r} V_i.$$

\textbf{(i):} $$|E(G_i[V_j^*, U_1])| \le (\lambda + 10\alpha + 10\epsilon + 5 \epsilon)n^2 \le 2 \lambda n^2.$$

\textbf{(ii):} $$|E(G_{3-i}[V_j, U_2])| \le (\lambda + 10\alpha + 10\epsilon + 5 \epsilon)n^2 \le 2 \lambda n^2.$$

\textbf{(iii):} $$n_j = |V_j^*| \ge \ell_j t \ge (1-\lambda)\ell t  \ge (1-\lambda)(1-5\epsilon)n  \ge (1-2\lambda)n.$$

\textbf{(iv):} $$(1+2\lambda)n \ge (1+\lambda)n+5\epsilon n \ge (1+\lambda)\ell t+5\epsilon n \ge |U_1| \ge (1-\lambda)\ell t \ge (1-\lambda)(1-5\epsilon)n \ge (1-2\lambda)n.$$

\textbf{(v):} $$(1+\lambda)n \ge (1+\lambda)\ell t \ge |U_2| \ge (1-\lambda)\ell t \ge (1-\lambda)(1-5\epsilon)n \ge (1-2\lambda)n.$$

Therefore, a $(64 \gamma, i, j)$-bad partition of $G^r$ corresponds to a $(128 \gamma, i, j)$-bad partition of $G$ for some $i \in [2]$ and $j \in [2]$. In the next three sections we show how to find a monochromatic cycle of length exactly $2n$ when $G$ has a $(\lambda, i, j)$-bad partition for some $i \in [2]$ and $j \in [2]$, where $\lambda = 128 \gamma$.

\section{Dealing with $(\lambda,i,1)$-bad partitions when $N-n_1-n_2\geq 3$}\label{1bad}

\subsection{Setup}

Without loss of generality, let $i=1$. Recall that $d_k(v)$ is the degree of $v$ in $G_k$, where $k \in [2]$. We assume that for some $\lambda < 0.01$
, there is a partition $V(G) = W_1 \cup W_2$ such that:
\begin{align}
(1-\lambda)n \le |W_2| &\le (1+\lambda)n_1; \label{cond6.1}\\
|E(G_1[W_1, W_2])| &\le \lambda n^2; \label{cond6.2} \\
|E(G_2[W_1])| &\le \lambda n^2. \label{cond6.3}
\end{align}

If $G$ has at least $4$ parts then either $N=3n-1$ and $N-n_1 \ge 2n-1$ which implies $n_1 \le n$, or else $N-n_1 = 2n-1$, $n_1 = n_2$ and $n_3+n_4 > n$ which implies $n_1 < n$. If $G$ is tripartite, then we could have $n_1$ much larger than $n$, but in this section, we will assume $n_1 < \frac53n$. The alternative, that $G$ is tripartite and $n_1 \ge \frac53n$, is handled in Subsection~\ref{subsection:nearly-bipartite}.

We know that $|W_1| \ge N - (1+\lambda)n_1 \ge 2n-1 - \lambda n_1 \ge (2-5\lambda)n$ since $n_1 \le 2n$. For any vertex $x$, fewer than $\frac53 n$ vertices of $W_1$ can be in the same part $V_i$ of $G$ as $x$, so at least $(\frac13 - 5\lambda)n > \frac14 n$ are in other parts of $G$. In other words, we have $d(x, W_1) \ge \frac 14 n$ for all $x \in V(G)$. 

We call a vertex $x \in V(G)$ $W_1$\emph{-typical} if $d_1(x, W_1) \ge \frac34 d(x, W_1)$, and $W_2$\emph{-typical} if $d_1(x, W_1) < \frac34 d(x, W_1)$.

If $x$ is $W_1$-typical, then $d_1(x, W_1) \ge \frac34 \cdot \frac14n = \frac3{16} n$. Since
\[
	\sum_{x \in W_2} d_1(x, W_1) = |E(G_1[W_1, W_2])| \le \lambda n^2,
\]
the number of $W_1$-typical vertices in $W_2$ is at most $\frac{\lambda n^2}{3n/16} < 6 \lambda n$.

Similarly, if $x$ is $W_2$-typical, then $d_2(x, W_1) \ge \frac14 \cdot \frac14 n = \frac1{16} n$. Since
\[
	\sum_{x \in W_1} d_2(x, W_1) = 2|E(G_2[W_1])| \le 2\lambda n^2,
\]
the number of $W_2$-typical vertices in $W_1$ is at most $\frac{2\lambda n^2}{n/16} = 32\lambda n$.

Let $W_1'$ be the set of all $W_1$-typical vertices, and $W_2'$ be the set of all $W_2$-typical vertices. The partition $(W_1', W_2')$ is almost exactly the same as the partition $(W_1, W_2)$: at most $40\lambda n$ vertices have been moved from one part to the other part to obtain $(W_1', W_2')$ from $(W_1, W_2)$. Therefore, if $x \in W_1'$, we still have $d_1(x, W_1') \ge \frac34 d(x, W_1) - 40\lambda n$, and if $x \in W_2'$, we still have $d_1(x, W_1') <\frac34 d(x,W_1) + 40\lambda n$. In either case, we still have $d(x, W_1') \ge \frac14 n - 40\lambda n$ for all $x$.

Moreover, $W_1'$ and $W_2'$ still satisfy similar conditions to $W_1$ and $W_2$:
\begin{enumerate}
\item $(1-41\lambda)n \le |W_2'| \le (1+\lambda)n_1 + 40\lambda n \le (1 + 81\lambda)n_1$ (since $n_1 \ge \frac n2$ in all cases).
\item $|E(G_1[W_1', W_2'])| \le \lambda n^2 + N \cdot (40\lambda n) \le 161\lambda n^2$, since we move at most $40\lambda n$ vertices with degree less than $N$.
\item $|E(G_2[W_1'])| \le \lambda n^2 + N \cdot (6\lambda n) \le 25\lambda n^2$, since we move at most $6\lambda n$ vertices with degree less than $N$ into $W_1'$.
\end{enumerate}

For convenience, let $\delta = 200\lambda$, which is at least as large as all multiples of $\lambda$ used above.

Our goal is to find a cycle of length $2n$ in either $G_1$ or $G_2$. We decide which type of cycle we will attempt to find based on the relative sizes of $W_1'$ and $W_2'$.

Suppose that $|W_1'| \ge 2n$ and, moreover, $|W_1' \setminus V_i| \ge n$ for all $i$. In this case, we find a cycle of length $2n$ in $G_1$; this is done in Subsection~\ref{subsection:inside-w1}.

Otherwise, we must have $|W_2'| \ge n$: either $|W_1'| \le 2n-1$, and $|W_2'| = N - |W_1'| \ge n$, or else $|W_1' \setminus V_i| \le n-1$ for some $i$, and
\[
	|W_2'| \ge |W_2' \setminus V_i| = |V \setminus V_i| - |W_1' \setminus V_i| \ge (N - n_i) - (n-1) \ge (2n-1)-(n-1) = n.
\]
In this case, we find a cycle of length $2n$ in $G_2$; this is done in Subsection~\ref{subsection:w1-to-w2}.

We use the following lemma to pick out ``well-behaved'' vertices in $W_1'$ and $W_2'$. For example, we commonly apply it to $G_2[W_1']$ or to $G_1[W_1', W_2']$.

\begin{lemma}
\label{lemma:good-vertices}
Let $H$ be an $n$-vertex graph with at most $\epsilon n^2$ edges, for some $\epsilon>0$, and let $S \subseteq V(H)$.  If $S' \subseteq S$ is any subset that excludes the $k$ vertices of $S$ with the highest degree, then every $v \in S'$ satisfies $d_H(v) < \frac{2\epsilon n^2}{k}$.

Additionally, when $H$ is bipartite, and $S$ is entirely contained in one part of $H$, every $v \in S'$ satisfies $d_H(v) < \frac{\epsilon n^2}{k}$.
\end{lemma}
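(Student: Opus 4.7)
The plan is a one-line double-counting argument applied to vertex degrees. First, by the handshake lemma, $\sum_{w \in V(H)} d_H(w) = 2|E(H)| \le 2\epsilon n^2$. Fix any $v \in S'$; by definition, each of the $k$ vertices of $S \setminus S'$ has degree at least $d_H(v)$, so
\[
k\, d_H(v) \;\le\; \sum_{w \in S \setminus S'} d_H(w) \;\le\; \sum_{w \in V(H)} d_H(w) \;\le\; 2\epsilon n^2,
\]
which rearranges to $d_H(v) \le 2\epsilon n^2 / k$, giving the first bound.

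For the bipartite refinement, suppose $H$ is bipartite with parts $A$ and $B$ and $S \subseteq A$. Then every edge of $H$ contributes to exactly one term of $\sum_{w \in A} d_H(w)$, so this restricted sum equals $|E(H)| \le \epsilon n^2$ rather than $2|E(H)|$. Since $S \setminus S' \subseteq S \subseteq A$, the same estimate applied with $A$ in place of $V(H)$ yields $k\, d_H(v) \le \epsilon n^2$, hence $d_H(v) \le \epsilon n^2 / k$, saving the factor of $2$.

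The only genuine subtlety, and the one place where I expect to be slightly careful, is the strict versus non-strict inequality stated in the lemma. Turning $\le$ into $<$ would require that $S \setminus S'$ does not fully saturate the degree sum; this is handled by breaking ties in ``the $k$ vertices of $S$ with highest degree'' in favor of $v$ (so that any tie forces $v$ out of $S'$), or by noting that the hypothesis $|E(H)| \le \epsilon n^2$ has slack and the extremal equality case requires every vertex outside $S \setminus S'$ to be isolated and the $k$ excluded vertices to all have identical degree — a rigidity that is easy to rule out in context. Aside from this routine bookkeeping, the proof is essentially complete in the two displayed inequalities above, and I do not anticipate any substantive obstacle.
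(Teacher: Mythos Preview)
Your approach is essentially the same as the paper's: a double-counting of degrees via the handshake lemma, with the factor of $2$ saved in the bipartite case because vertices on one side count each edge once. The only difference is in how the strict inequality is obtained. The paper argues by contradiction: if some $v\in S'$ had $d_H(v)\ge \frac{2\epsilon n^2}{k}$, then $v$ together with the $k$ excluded vertices gives $k+1$ vertices each of degree at least $\frac{2\epsilon n^2}{k}$, whose degree sum is at least $(k+1)\cdot\frac{2\epsilon n^2}{k}>2\epsilon n^2\ge 2|E(H)|$, a contradiction. Including $v$ itself in the count is what yields strictness cleanly, and makes your discussion of tie-breaking and rigidity unnecessary.
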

\begin{proof}
In the first case, if we have $d_H(v) \ge \frac{2\epsilon n^2}{k}$ for any $v \in S'$, then we also have $d_H(v) \ge d$ for the $k$ vertices of $S$ with the highest degree, which we excluded from $S'$. The sum of degrees of these $k+1$ vertices exceeds $2\epsilon n^2$, so it is greater than twice the number of edges in $H$, a contradiction.

In the second case, if we have $d_H(v) \ge \frac{\epsilon n^2}{k}$ for any $v \in S'$, the same sum of degrees exceeds $\epsilon n^2$. But since the vertices of $S$ are all on one side of the bipartition of $H$, this sum of degrees cannot be greater than the number of edges in $H$, which is again a contradiction.
\end{proof}

\subsection{The nearly-bipartite subcase}
\label{subsection:nearly-bipartite}

In this subsection, we assume that $G$ is tripartite with $n_1 \ge \frac53 n$. Recall that when $G$ is tripartite we have $n_1 = n_2$ and $n_1 + n_3 = n_2 + n_3 = 2n-1$, and that throughout Section~\ref{1bad} we assume $N - n_1 - n_2 \ge 3$, or in this case that $n_3 \ge 3$.

\textbf{Case 1:} $|W_1 \cap V_i| \ge (1+10\lambda)n$ for $i=1$ or $i=2$. We assume $i=1$; the proof for the case $i=2$ is the same. In this case, let $X$ be an $n$-vertex subset of $V_1 \cap W_1$ avoiding the $5\lambda n$ vertices of $V_1 \cap W_1$ with the most edges of $G_2$ to $W_1 \setminus V_1$ and the $5\lambda n$ vertices of $V_1 \cap W_1$ with the most edges of $G_1$ to $W_2 \setminus V_1$. 

For any vertex $v \in X$, we have $d_2(v, W_1\setminus V_1) \le \frac{\lambda n^2}{5\lambda n} = \frac15 n$ and $d_1(v, W_2 \setminus V_1) \le \frac15 n$ by Lemma~\ref{lemma:good-vertices}.

We partition $V_2 \cup V_3$ into sets $Y_1$ and $Y_2$ by the following procedure.
\begin{enumerate}
\item The $2 \lambda n$ vertices of $W_1 \setminus V_1$ with the most edges of $G_2$ to $X$ are set aside, and the remaining vertices of $W_1 \setminus V_1$ are assigned to $Y_1$.

By Lemma~\ref{lemma:good-vertices}, any vertex $v$ assigned to $Y_1$ in this step has $d_2(v, X) \le \frac12 n$.

\item The $2\lambda n$ vertices of $W_2 \setminus V_1$ with the most edges of $G_1$ to $X$ are set aside, and the remaining vertices of $W_2 \setminus V_1$ are assigned to $Y_2$.

By Lemma~\ref{lemma:good-vertices}, any vertex $v$ assigned to $Y_2$ in this step has $d_1(v,X) \le \frac12 n$.

\item Each remaining vertex $v$ is assigned to $Y_1$ if $d_1(v, X) \ge \frac n2$ and to $Y_2$ otherwise (in which case $d_2(v, X) \ge \frac n2$).
\end{enumerate}
Since $|V_2 \cup V_3| = 2n-1$, we must have $|Y_1| \ge n$ or $|Y_2| \ge n$. Let $Y_j'$ be an $n$-vertex subset of $Y_j$, where $j \in [2]$ and $|Y_j| \ge n$. We apply Theorem~\ref{chvatal} to find a Hamiltonian cycle in the bipartite graph $H = G_j[X, Y_j']$. 

The minimum $H$-degree in $X$ is $\frac45n - 2\lambda n$, since each $v \in X$ had at most $\frac15 n$ edges to $W_j \setminus V_1$ which were not in $G_j$, and at most $2\lambda n$ vertices of $Y_j'$ did not come from $W_j \setminus V_1$ originally. The minimum $H$-degree in $Y_j'$ is $\frac n2$, so the condition of Theorem~\ref{chvatal} is satisfied: whenever $d_H(u_i) \le i$, we have $i \ge (\frac45 - 2\lambda)n$, so $d_H(v_{n-i}) \ge \frac n2 \ge (\frac15 + 2\lambda)n+1$.

\textbf{Case 2:} $|V_i \cap W_1| < (1+10\lambda)n$ for $i=1$ and $i=2$. By~\eqref{cond6.1}, we must have $|W_1| \ge N - (1+\lambda)n_1 = 2n-1 - \lambda n_1 > 2n - 3\lambda n$. Since $n_1=n_2 \ge \frac{5n}{3}$ and $n_2+n_3 = 2n-1$, fewer than $\frac13 n$ vertices of $W_1$ are in $V_3$, so at least $(\frac53 - 3\lambda)n$ of them are in $V_1 \cup V_2$; therefore $|W_1 \cap V_1| > (\frac23 - 13\lambda)n$ and $|W_1 \cap V_2| > (\frac23 - 13\lambda)n$.

Because $2n > n_1 = n_2 \ge \frac53 n$, we have $(\frac23 - 10\lambda)n < |V_i \cap W_2| < (\frac43 + 13\lambda)n$ for $i=1,2$, as well.

Next, we choose subsets $X_{ij} \subseteq V_i \cap W_j$ with $|X_{11}| = |X_{21}| = |X_{12}| = |X_{22}| = \frac n2 + 10$. To choose $X_{11}$ and $X_{21}$, avoid the $\frac{1}{20}n$ vertices with the most edges in $G_1$ to $W_2$ and the $\frac{1}{20}n$ vertices with the most edges in $G_2$ to $W_1$, so that each chosen vertex has at most $20\lambda n$ edges of each kind by Lemma~\ref{lemma:good-vertices}. To choose $X_{12}$ and $X_{22}$, avoid the $\frac{1}{10}n$ vertices with the most edges in $G_1$ to $W_1$, so that each chosen vertex has at most $10\lambda n$ such edges by Lemma~\ref{lemma:good-vertices}.

First, we observe that if $H$ is any of the graphs $G_1[X_{11}, X_{21}]$, $G_2[X_{12}, X_{21}]$, or $G_2[X_{11}, X_{22}]$, then given any vertices $v, w$ in $H$, we can find a $(v,w)$-path in $H$ on $m$ vertices, provided that $n-10 \le m \le n+10$ (this is not optimal, but it is more than we need) and that the parity of $m$ is correct. 

To do so, we apply Theorem~\ref{lasvergnas}. If $v$ and $w$ are on the same side of $H$, add a vertex $x$ to the other side adjacent to all vertices in the side containing $v$ and $w$; if not, add an edge $vw$. Then take a subgraph containing $\lceil \frac m2 \rceil$ vertices from each side, making sure to include $v, w$ and if applicable $x$. In this subgraph, the minimum degree is at least $\lceil \frac m2 \rceil - 20\lambda n$, so we can use Theorem~\ref{lasvergnas} to find a Hamiltonian cycle in this graph containing either the edge $vw$ or the edges $vx$ and $xw$. Deleting the vertex $x$ or the edge $vw$, whichever applies, creates a $(v,w)$-path in $H$ of the correct length.

Suppose that $G_2[X_{12}, X_{22}]$ contains a matching $M = \{u_1u_2, v_1 v_2\}$ of size $2$, where $u_1, v_1 \in X_{12}$ and $u_2, v_2 \in X_{22}$. In that case, we can find a $(u_1,v_1)$-path $P$ in $G_2[X_{12}, X_{21}]$ on $2\lceil \frac n2\rceil + 1$ vertices and a $(u_2, v_2)$-path $Q$ in $G_2[X_{11}, X_{22}]$ on $2\lfloor \frac n2\rfloor - 1$ vertices by the previous observation. Joining the paths $P$ and $Q$ using the edges of the matching $M$, we find a cycle of length $2n$ in $G_2$.

Now we assume $G_2[X_{12}, X_{22}]$ does not contain a matching of size $2$. If the size of a maximum matching in this graph is one, then there is a vertex cover of size one  since $G_2[X_{12}, X_{22}]$ is bipartite. We delete this vertex cover from $X_{12}$ or $X_{22}$ (it depends on where this vertex cover is). Having changed $X_{12}$ and $X_{22}$ in this way, $G_1[X_{12}, X_{22}]$ is a complete bipartite graph, so it also has the property that any two vertices in it can be joined by a path on $m$ vertices, provided that $n-10 \le m \le n+10$ and that the parity of $m$ is correct.

Note that there are at least three vertices in $V_3$.

We say that a vertex $v \in V_3$:
\begin{itemize}
\item is \emph{$j$-adjacent} to a set $S$ if it has at least two edges in $G_j$ to $S$.
\item \emph{S-connects} $G_j$ if it is $j$-adjacent to both $X_{11}$ and $X_{12}$, or if it is $j$-adjacent to both $X_{21}$ and $X_{22}$. (``S-connects'' because it is $j$-adjacent to two sets in the \emph{same} part of $V_1$ or $V_2$.)
\item \emph{C-connects} $G_1$ if it is $1$-adjacent to both $X_{11}$ and $X_{22}$, or if it is $1$-adjacent to both $X_{12}$ and $X_{21}$. (``C-connects'' because the $j$-adjacency \emph{crosses} from $V_1$ to $V_2$.)
\item \emph{C-connects} $G_2$ if it is $2$-adjacent to both $X_{11}$ and $X_{21}$, or if it is $2$-adjacent to both $X_{12}$ and $X_{22}$.
\item \emph{Folds into $G_1$} if it is $1$-adjacent to both $X_{11}$ and $X_{12}$, or if it is $1$-adjacent to both $X_{21}$ and $X_{22}$.
\item \emph{Folds into $G_2$} if it is $2$-adjacent to both $X_{11}$ and $X_{22}$, or if it is $2$-adjacent to both $X_{12}$ and $X_{21}$.
\end{itemize}

Some comments on these definitions: first, a vertex that is $j$-adjacent to at least three of $X_{11}, X_{12}, X_{21}, X_{22}$ is guaranteed to both S-connect and C-connect $G_j$. Second, a vertex that is $j$-adjacent to only two of $X_{11}, X_{12}, X_{21}, X_{22}$ for each value of $j$ may S-connect both $G_1$ and $G_2$, or C-connect $G_1$ and fold into $G_2$, or C-connect $G_2$ and fold into $G_1$. In particular, each vertex either S-connects or C-connects some $G_j$.

If there are two vertices in $V_3$ that both S-connect $G_j$, or both C-connect $G_j$, then we can find a cycle of length $2n$ in $G_j$. The cases are all symmetric; without loss of generality, suppose $v, w\in V_3$ both S-connect $G_1$. We can find a path $P$ in $G_1[X_{11}, X_{21}]$ on $2\lceil \frac n2 \rceil - 1$ vertices that starts at a $G_1$-neighbor of $v$ and ends at a $G_1$-neighbor of $w$, and a path $Q$ in $G_1[X_{12}, X_{22}]$ on $2\lfloor \frac n2 \rfloor - 1$ vertices that starts at a $G_1$-neighbor of $v$ and ends at a $G_1$-neighbor of $w$. Joining $P$ and $Q$ via $v$ at one endpoint and via $w$ on the other creates a cycle of length $2n$ in $G_1$.

If we cannot find two vertices as in the previous paragraph, then the best we can do is to find, for some $j$, a vertex $v \in V_3$ that S-connects $G_j$ and another vertex $w \in V_3$ that C-connects $G_j$. Since $v$ does not C-connect $G_j$, it must also S-connect $G_{3-j}$.

There is at least one more vertex $x \in V_3$. By assumption, it does not S-connect $G_{3-j}$ and neither S-connects nor C-connects $G_j$, so it must fold into $G_j$ (and C-connect $G_{3-j}$).

Without loss of generality, suppose that $j=1$ and $x$ has a $G_1$-neighbor in both $X_{11}$ and $X_{21}$. We add an artificial edge $e_x$ between a pair of such neighbors of $x$.

As before, we can find a path $P$ in $G_1[X_{11}, X_{21}]$ joining a neighbor of $v$ to a different neighbor of $w$; we add the requirement that it uses the edge $e_x$, which is still possible by Theorem~\ref{lasvergnas}. We can also find a path $Q$ in $G_1[X_{12}, X_{22}]$ joining a neighbor of $v$ to a different neighbor of $w$. Since $v$ S-connects $G_1$ and $w$ C-connects $G_1$, one of these paths will have even length and the other will have odd length, but we can choose them to have $2n-3$ vertices total.

Now join the paths $P$ and $Q$ using the vertices $v$ and $w$, then replace the artificial edge $e_x$ by two edges to $x$ from its endpoints. The result is a cycle of length $2n$ in $G_1$.

\subsection{Finding a cycle in $G_1$}
\label{subsection:inside-w1}

In this subsection, we are considering a $2$-edge-colored graph $G$ and a partition $W_1' \cup W_2'$ of $V(G)$ satisfying the following properties:
\begin{enumerate}
\item \label{inw1-G}
$G$ is a complete $s$-partite graph with parts $V_1, V_2, \dots, V_s$ of size $n_1, n_2, \dots, n_s$, with $s\ge 3$ and $n_1 + \ldots + n_s \le 4n$. 

\item \label{inw1-W2size}
$(1-\delta)n \le |W_2'| \le (1+\delta)n_1$.

\item \label{inw1-W1edges}
$|E(G_1[W_1', W_2'])| \le \delta n^2$ and $|E(G_2[W_1'])| \le \delta n^2$.

\item \label{inw1-degree}
If $x \in W_1'$, then $d_1(x, W_1') \ge \frac34 d(x, W_1) - \delta n$.

\item \label{inw1-W1size}
$|W_1'| \ge 2n$ and $|W_1' \setminus V_i| \ge n$ for all $i$. (This is the assumption that leads to this subsection as opposed to Subsection~\ref{subsection:w1-to-w2}.)
\end{enumerate}
We can deduce a further degree condition that holds for all vertices $x \in W_1'$:
\begin{enumerate}
\setcounter{enumi}{5}
\item \label{inw1-degree3}
By Properties~\ref{inw1-G} and~\ref{inw1-W2size}, $|W_1'| = |V(G)| - |W_2'| \le 4n - (1-\delta)n = (3+\delta)n$, so $d(x, W_1') \le (3+\delta)n$. By Property~\ref{inw1-degree}, we have $d_2(x, W_1') \le \frac14(3+\delta)n + \delta n \le (\frac34 + 2\delta)n$.
\end{enumerate}
To find a cycle of length $2n$ in $G_1$, we will choose two disjoint sets $X, Y \subseteq W_1'$ of size $n$, then apply Theorem~\ref{chvatal} to find a Hamiltonian cycle in $H = G_1[X,Y]$.

Let $a, b \in \{1,2,\ldots,s\}$ be such that $V_a \cap W_1'$ is the largest part of $G_1[W_1']$ and $V_b \cap W_1'$ is the second largest part of $G_1[W_1']$. To define $X$ and $Y$, we begin by assigning $V_a \cap W_1'$ to $X$ and $V_b \cap W_1'$ to $Y$. If either of these exceeds $n$ vertices, we choose $n$ of the vertices arbitrarily.

Continue by assigning the parts $V_i \cap W_1'$ to either $X$ or $Y$ arbitrarily, for as long as this does not make $|X|$ or $|Y|$ exceed $n$. Once this is no longer possible, then:
\begin{itemize}
\item If there are still at least two parts $V_i \cap W_1'$ left unassigned, then each of them must have more than $\max\{n-|X|,n-|Y|\}$ vertices. Therefore we can add vertices from one of them to $X$ to make $|X|=n$ (if necessary), and add vertices from the other to $Y$ to make $|Y|=n$ (if necessary). 
\item If there is only one part of $G_1[W_1']$ left unassigned, call it $V_{\text{split}} \cap W_1'$. We assign $n-|X|$ vertices of $V_{\text{split}} \cap W_1'$ to $X$ and $n-|Y|$ other vertices of $V_{\text{split}} \cap W_1'$ to $Y$. 
\item If there are no parts left unassigned, then we must have $|X|=|Y|=n$.
\end{itemize}
We must show that we do not run out of vertices in either of the last two cases. If $|V_a \cap W_1'| \le n$, then we do not run out because $|W_1'| \ge 2n$ (by Property~\ref{inw1-W1size}) and all vertices in $W_1' \setminus V_{\text{split}}$ are assigned to either $X$ or $Y$, so either $V_{\text{split}} \cap W_1'$ must contain enough vertices to fill $X$ and $Y$ or $X$ and $Y$ are already full. If $|V_a \cap W_1'| > n$, then we do not run out because $|W_1' \setminus V_a| \ge n$  (again, by Property~\ref{inw1-W1size}), and after $V_a \cap W_1'$ is assigned, all vertices of $W_1'$ are added to $Y$ until it is full.

The most difficult case for us is the one in which some part $V_{\text{split}} \cap W_1'$ is divided between $X$ and $Y$. To handle all cases at once, we assume this happens; if necessary, we choose some part $V_i \cap W_1'$ ($i\ne a,b$) to be a degenerate instance of $V_{\text{split}}$ which is entirely in $X$ or $Y$.

Let $n_x = |V_{\text{split}} \cap X|$ and $n_y = |V_{\text{split}} \cap Y|$. We assigned the largest part of $G[W_1']$ to $X$ and the second-largest to $Y$; therefore $X$ and $Y$ both contain at least $n_x + n_y$ vertices not in $V_{\text{split}}$. Since $|X|=|Y|=n$, we must have $n_x + (n_x + n_y) \le n$ and $n_y + (n_x + n_y) \le n$; therefore $n_x + n_y \le \frac23 n$, while individually $n_x \le \frac n2$ and $n_y \le \frac n2$.

We first prove some bounds on $d_1(x,Y)$ for $x \in X$ (and, by symmetry, $d_1(y, X)$ for $y \in Y$). If $x \notin V_{\text{split}}$, then $d(x, Y) = n$ (since there are no vertices of $Y$ in the same part of $G$ as $x$) while $d_2(x, W_1') \le (\frac34 + 2\delta)n$ by Property~\ref{inw1-degree3}, so $d_1(x, Y) \ge (\frac14 - 2\delta)n$. If $x \in V_{\text{split}}$, then $d(x, W_1') = (n - n_x) + (n-n_y)$, since all vertices of $W_1'$ outside $V_{\text{split}}$ have been assigned to either $X$ or $Y$, so $d_2(x, W_1') \le \frac14(2n - n_x - n_y) + \delta n$ by Property~\ref{inw1-degree}. This leaves $d_1(x, Y) \ge \frac12n - \frac34 n_y - \delta n \ge (\frac18 - \delta)n$.

If we exclude the $\frac{1}{10} n$ vertices of $X$ with the most edges to $W_1'$ in $G_2$, then by Lemma~\ref{lemma:good-vertices}, the remaining vertices $x\in X$ have $d_2(x, W_1') \le 20\delta n$. If $x \notin V_{\text{split}}$, this means $d_1(x,Y) \ge (1-20\delta)n$, and if $x \in V_{\text{split}}$, this means that $d_1(x,Y) \ge n - n_y - 20\delta n$.

Let $H = G_1[X,Y]$, let $u_1, u_2, \ldots, u_n$ be the vertices of $X$ ordered so that $d_H(u_1) \le \ldots \le d_H(u_n)$, and let $v_1, v_2, \ldots, v_n$ be the vertices of $Y$ ordered so that $d_H(v_1) \le \ldots \le d_H(v_n)$. 

Suppose $u_i \in X$ satisfies $d_H(u_i) \le i < n$. We have shown $d_1(x, Y) \ge (\frac18 - \delta)n$, so among $u_1, u_2, \ldots, u_i$, there must be a vertex not among the $\frac{1}{10}n$ vertices of $X$ with the most edges to $W_1'$ in $G_2$. For such a vertex, $d_1(x,Y) \ge n - n_y - 20\delta n$, so in particular $d_H(u_i) \ge n - n_y - 20\delta n$, which means $i \ge n - n_y - 20\delta n$. 

If we had $d_H(v_{n-i}) \le n-i$, then by repeating this argument for vertices in $Y$, we would have $d_H(v_{n-i}) \ge n - n_x - 20\delta n$, which would mean $n-i \ge n - n_x - 20\delta n$. Adding this to the inequality on $i$, we would get $n \ge 2n - n_x - n_y - 40\delta n$, which is impossible since $n_x + n_y \le \frac23 n$. So we must have $d_H(v_{n-i}) \ge n-i+1$, and by Theorem~\ref{chvatal}, $H$ contains a Hamiltonian cycle. This gives a cycle of length $2n$ in $G_1$.

\subsection{Finding a cycle in $G_2$}
\label{subsection:w1-to-w2}

In this subsection, we are considering a $2$-edge-colored graph $G$ and a partition $W_1' \cup W_2'$ of $V(G)$ satisfying the following properties:
\begin{enumerate}
\item \label{w1w2-G}
$G$ is a complete $s$-partite graph with parts $V_1, V_2, \dots, V_s$ of size $n_1, n_2, \dots, n_s$, with $s\ge 3$ and $n_1 + \ldots + n_s \le 4n$. Morever, $\frac53n > n_1 \ge \dots \ge n_s$; we considered the case $n_1 \ge \frac53n$ in Subsection~\ref{subsection:nearly-bipartite}.

\item \label{w1w2-Gcases}

Either $N - n_1 > 2n-1$ and $|V_i| \le n$ for all $i$, or $n_1 = n_2 \ge n$, $s=3$, and $N-n_1 = N-n_2 = 2n-1$.

\item \label{w1w2-W1edges}
$|E(G_1[W_1', W_2'])| \le \delta n^2$ and $|E(G_2[W_1'])| \le \delta n^2$.

\item \label{w1w2-degree}
If $x \in W_2'$, then $d(x, W_1') \ge \frac14 n - \delta n$, and $d_2(x, W_1') \ge \frac14 d(x, W_1') - \delta n$.

\item \label{w1w2-W2size}
$n \le |W_2'| \le (1+\delta)n_1$. (The lower bound  is the assumption that leads to this subsection as opposed to Subsection~\ref{subsection:inside-w1}.)
\end{enumerate}

\newcommand{\Good}{\mathsf{Good}}
\newcommand{\Bad}{\mathsf{Bad}}

Let $\Bad$ consist of the $\sqrt{\delta} n$ vertices of $W_2'$ that maximize $d_1(x, W_1')$; let $\Good = W_2' \setminus \Bad$. By Lemma~\ref{lemma:good-vertices}, $d_1(x, W_1') \le \sqrt{\delta} n$ for all $x \in \Good$.

Our strategy is to handle the vertices in $\Bad$: first by finding short vertex-disjoint paths contaning the vertices in $\Bad$, then by combining them into a single path. Finally, we extend this path to a cycle of length $2n$ in $G_2[W_1', W_2']$.

\subsubsection{Constructing paths containing each vertex of $\Bad$}

For every vertex $x\in \Bad$, we find a four-edge path $P(x)$ in $G_2$, which contains $x$, but begins and ends at a vertex of $\Good$. We construct these paths one at a time; for each vertex $x$, we must keep in mind that in each of $W_1'$ and $W_2'$, up to $2\sqrt{\delta}n$ vertices may have been used for previously chosen paths. 

This is not always possible; when it is not, we find a cycle of length $2n$ in another way.

\begin{lemma}\label{bad-or-cycle}
One of the following holds:
\begin{enumerate}
\item $G_2$ contains a collection $\{P(x) : x \in \Bad\}$ of vertex-disjoint paths of length $4$, such that for all $x \in \Bad$, $P(x)$ begins and ends at a vertex of $\Good$, and also contains $x$ and two vertices in $W_1'$.
\item $G_2$ contains a cycle of length $2n$.
\end{enumerate}
\end{lemma}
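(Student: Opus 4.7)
The plan is to process $x \in \Bad$ one at a time, greedily building each path $P(x) = g_1 w_1 x w_2 g_2$ with $g_1,g_2 \in \Good$, $w_1,w_2 \in W_1'$, and all four edges in $G_2$, using only vertices not already chosen for an earlier path. Since $|\Bad| \le \sqrt{\delta}\,n$, at every stage at most $5\sqrt{\delta}\,n$ vertices are unavailable, which is negligible compared to the degree bounds below.

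By Property~(4) of Subsection~\ref{subsection:w1-to-w2}, every $x \in \Bad \subseteq W_2'$ satisfies $d_2(x, W_1') \ge \tfrac14 d(x, W_1') - \delta n \ge \tfrac{n}{16} - 2\delta n$, so the set $A_x$ of unused $G_2$-neighbors of $x$ in $W_1'$ has size at least $\tfrac{n}{17}$. Applying Lemma~\ref{lemma:good-vertices} to $G_1[W_1', W_2']$, all but at most $\sqrt{\delta}\,n$ vertices $w \in W_1'$ satisfy $d_1(w, W_2') \le \sqrt{\delta}\,n$ and hence $d_2(w, \Good) \ge d(w, W_2') - 2\sqrt{\delta}\,n$. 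Provided $d(w, W_2') \gg \sqrt{\delta}\,n$ for most $w$ in our candidate pool $A_x$, greedy selection of distinct $w_1, w_2 \in A_x$ and distinct $g_1 \in N_{G_2}(w_1) \cap \Good$, $g_2 \in N_{G_2}(w_2) \cap \Good$ is straightforward and $P(x)$ is built.

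The main obstacle is the structural case in which a single part $V_j$ contains nearly all of $W_2'$, so $d(w, W_2') = |W_2' \setminus V_j|$ is tiny for every $w \in V_j$. In this regime, the part structure forces $x \in V_j$ for all but at most $\sqrt{\delta}\,n$ vertices of $\Bad$, and consequently $A_x \subseteq W_1' \setminus V_j$; for any such $w$ one has $d(w, W_2') \ge |W_2' \cap V_j| \ge n - \sqrt{\delta}\,n$, so the greedy argument above succeeds unchanged. The only remaining subcase is when $|A_x \setminus V_j|$ is itself too small to supply two candidates; but then overwhelmingly many $G$-edges from $x$ to $W_1' \setminus V_j$ would have to lie in $G_1$, and combined with the bound $|E(G_1[W_1', W_2'])| \le \delta n^2$ from Property~(3), this in turn forces the bipartite graph $G_2[W_1' \setminus V_j, W_2' \cap V_j]$ to be almost complete bipartite on at least $2n$ vertices. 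Choosing a balanced $(n,n)$-subgraph and verifying the Chv\'atal degree condition then invokes Theorem~\ref{chvatal} to produce a monochromatic Hamiltonian cycle of length exactly $2n$ in $G_2$, placing us in conclusion~(2) of the lemma.
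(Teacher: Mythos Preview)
Your overall architecture—build each $P(x)$ greedily, and when greed fails fall back to a direct Chv\'atal argument—matches the paper. The greedy portion is fine: the degree estimate $d_2(x,W_1')\ge \frac{n}{16}-O(\delta n)$ and the observation that all but $O(\sqrt{\delta}\,n)$ vertices $w\in W_1'$ satisfy $d_2(w,\Good)\ge d(w,W_2')-O(\sqrt{\delta}\,n)$ are exactly what is needed, and the pigeonhole ``one part $V_j$ contains nearly all of $W_2'$'' correctly isolates the obstruction.

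The gap is in your treatment of the fallback case. Two specific problems:

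\emph{(a) The single-vertex contradiction does not exist.} You write that if $|A_x\setminus V_j|$ is too small then ``overwhelmingly many $G$-edges from $x$ to $W_1'\setminus V_j$ lie in $G_1$, and combined with $|E(G_1[W_1',W_2'])|\le \delta n^2$\dots''. But $x\in\Bad$, and $\Bad$ is \emph{defined} as the $\sqrt{\delta}\,n$ vertices of $W_2'$ with the largest $d_1(\cdot,W_1')$; a single such $x$ may legitimately have $d_1(x,W_1')$ of linear size without violating the global edge bound. So nothing is forced here.

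\emph{(b) The Chv\'atal target may be too small on the $W_2'$ side.} You propose to find a Hamiltonian cycle in an $(n,n)$-subgraph of $G_2[W_1'\setminus V_j,\,W_2'\cap V_j]$. But Property~(5) only gives $|W_2'|\ge n$, so $|W_2'\cap V_j|\ge |W_2'|-O(\sqrt{\delta}\,n)$ can be strictly below $n$; after discarding the $\Bad$ vertices (which can have $G_2$-degree~$0$ to $W_1'\setminus V_j$ and would destroy the Chv\'atal condition) you may have fewer than $n$ vertices left on that side. There is no $(n,n)$-subgraph to apply Theorem~\ref{chvatal} to.

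The paper avoids both issues by first deducing, from the failure of greed, the structural facts that $G$ is tripartite with $n_1=n_2\ge (1+\tfrac1{20}-O(\sqrt{\delta}))n$ and that the \emph{other} large part $V_b$ has $|V_b\cap W_1'|>n$. It then takes $Y_1\subseteq V_b\cap W_1'$ of size exactly $n$ and partitions the remaining $2n-1$ vertices (those in $V_j\cup V_3$) into $X_1,X_2$ according to the majority colour of their edges to $Y_1$. Since $|X_1|+|X_2|=2n-1$, one of them has size $\ge n$, and Theorem~\ref{chvatal} then yields a monochromatic $C_{2n}$ (in $G_1$ or $G_2$; note the lemma's conclusion~(2) really means ``some $G_i$'', as the subsequent use makes clear). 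This colour-splitting trick is what guarantees a side of size $\ge n$ without needing $|W_2'|>n$, and it is the missing idea in your fallback.
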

\begin{proof}
We attempt to find the collection of vertex-disjoint paths, one vertex of $\Bad$ at a time.

By Property~\ref{w1w2-degree} at the beginning of this section, even if $x \in \Bad$, we have $d(x, W_1') \ge (\frac14 - \delta)n$ and $d_2(x, W_1') \ge \frac14 d(x, W_1') - \delta n$, so $d_2(x, W_1') \ge (\frac1{16} -\frac54 \delta) n$. There is a part $V_i$ with $d_2(x, W_1' \cap V_i) \ge (\frac{1}{64} - \frac{5}{16}\delta) n$.

First we consider the first case of Property~\ref{w1w2-Gcases}. That is, suppose $N - n_1 > 2n-1$; then we have $|V_i| = n_i \le n_1 \le n$, so $|W_2' \cap V_i| \le (\frac{63}{64} + \frac{5}{16}\delta)n$. But $|W_2'| \ge 2n$ in total, so there must be another part $V_j$ with $|W_2' \cap V_j| \ge (\frac{1}{64} - \frac{5}{16}\delta) n$. We can choose two vertices $v, w\in V_j$ to use as the endpoints of $P(x)$: ruling out the vertices of $V_j \cap \Bad$ (at most $\sqrt{\delta}n$) and previously used vertices of $W_2'$ in $V_j$ (at most $2\sqrt{\delta}n$) we still have a number of choices linear in $n$.

Now we know not just the center vertex $x$ of the path $P(x)$ but also its two endpoints $v$ and $w$. To complete $P(x)$, we must find a common neighbor of $v$ and $x$, and another common neighbor of $w$ and $x$. This is possible, since there are at least $(\frac{1}{64} - \frac{5}{16}\delta) n$ neighbors of $x$ in $V_1' \cap V_i$; $v$ and $w$ have edges in $G_2$ to all but at most $\sqrt{\delta}n$ of them, and we exclude at most $2\sqrt{\delta}n$ more that have been already used.

We call the method above of choosing the collection $\{P(x) : x \in \Bad\}$ the \emph{greedy strategy}. As we have seen, it always works in the first case of Property~\ref{w1w2-Gcases}; it remains to see when it works in the second case. Now, we assume that $G$ is tripartite, $n_1 = n_2 \ge n$, and $N - n_1 = N - n_2 = 2n-1$.

The greedy strategy continues to work if we can always choose the part $V_j$ from which to pick the endpoints of $P(x)$. For this choice to always be possible, it is enough that at least two parts of $G$ contain $3\sqrt{\delta}n$ vertices of $W_2'$: both of them will have vertices outside $\Bad$ not previously chosen for any path, and one of them will not be the same as $V_i$.

If this does not occur, then one part $V_a$ of $G$ contains all but $6\sqrt{\delta}n$ vertices of $W_2'$, and each of the other two parts contains fewer than $3\sqrt{\delta}n$ vertices of $W_2'$. If $V_a$ contains fewer than $\frac{1}{20}n$ vertices of $W_1'$, then the greedy strategy still works: for any $x \in \Bad$, we have $d_2(x, W_1') \ge (\frac1{16} -\frac54 \delta) n > |V_a \cap W_1'| + 2\sqrt{\delta}n$, so we can always choose a part of $G$ other than $V_a$ to play the part of $V_i$. In this case, it does not matter that only $V_a$ contains many vertices of $W_2'$, because we only need to choose the endpoints of $P(x)$ from vertices in $V_a$.

The greedy strategy fails in the remaining case: when $V_a$ contains all but $6\sqrt{\delta}n$ vertices of $W_2'$ and at least $\frac{1}{20}n$ vertices of $W_1'$. Then $|V_a| > n$, so without loss of generality, $V_a = V_1$. In this case, we do not try to find the paths $P(x)$ and instead find a cycle of length $2n$ in $G_1$ or $G_2$ directly.

We have a lower bound on $n_1 = n_2 = |V_2|$: it is $|V_2 \cap W_1'| + |V_2 \cap W_2'| \ge (1 + \frac1{20} - 6\sqrt{\delta})n$. Since $|V_1 \cap W_2'| \le 3\sqrt{\delta}n$, we have $|V_1 \cap W_1'| \ge (\frac{21}{20} - 9\sqrt{\delta})n > n$.

Let $Y_1$ be a subset of exactly $n$ vertices of $V_1 \cap W_1'$, chosen to avoid the $\sqrt{\delta}n$ vertices of $V_1 \cap W_1'$ with largest degree in $G_1[W_1', W_2']$ and the $\sqrt{\delta}n$ vertices of $V_1 \cap W_1'$ with largest degree in $G_2[V_1 \cap W_1', W_1' \setminus V_1]$. (This is possible since $(\frac{21}{20} - 11\sqrt{\delta})n > n$ as well.) In both cases, if a vertex $x \in Y_1$ has degree $d$ in the corresponding graph, we get at least $\sqrt{\delta} nd$ edges in either $G_1[W_1', W_2']$ or $G_2[W_1']$ by looking at the vertices we deleted; therefore $\sqrt{\delta} nd \le \delta n^2$ and $d \le \sqrt{\delta}n$.

Redistribute vertices of $V_2 \cup V_3$ into two parts $(X_1, X_2)$ as follows:
\begin{itemize}
\item All vertices of $W_1' \setminus V_1$, except the $\sqrt{\delta} n$ vertices $v$ maximizing $d_2(v, Y_1)$, are put in $X_1$. A vertex $v$ of this type is guaranteed to have $d_2(v, Y_1) \le \sqrt{\delta}n$.
\item All vertices of $W_2' \setminus V_1$, except the vertices in $\Bad$, are put in $X_2$. A vertex $v$ of this type is guaranteed to have $d_1(v, Y_1) \le \sqrt{\delta}n$.
\item The remaining vertices, of which there are at most $2\sqrt{\delta} n$, are assigned to $X_1$ or $X_2$ based on their edges to $Y_1$. If $d_1(v, Y_1) \ge \frac n2$, then $v$ is put into $X_1$; otherwise, $d_2(v, Y_1) \ge \frac n2$, and $v$ is put into $X_2$.
\end{itemize}
The sets $X_1, X_2, Y_1$ satisfy the following properties. For any $v \in X_1$, $d_1(v, Y_1) \ge \frac n2$. For any $v \in X_2$, $d_2(v, Y_1) \ge \frac n2$. For any $v \in Y_1$, $d_2(v, X_1) \le 4\sqrt{\delta}n$, since $d_2(v, W_1') \le \sqrt{\delta}n$ and $X_1$ contains at most $3\sqrt{\delta} n$ vertices of $W_2'$; similarly, for any $v \in Y_1$, $d_1(v, X_2) \le 4\sqrt{\delta} n$.

Since $|X_1| + |X_2| = |V_2 \cup V_3| = 2n-1$, either $|X_1| \ge n$ or $|X_2| \ge n$. 

If $|X_1| \ge n$, then we let $X_1'$ be a subset of exactly $n$ vertices of $X_1$, and find a cycle of length $2n$ in $H = G_1[X_1', Y_1]$ by applying Theorem~\ref{chvatal}. The hypotheses of the theorem are satisfied by the minimum degree conditions above: for $u \in X_1'$, $d_H(u) \ge \frac12 n$, and for $v \in Y_1$, $d_H(v) \ge (1 - 4\sqrt{\delta})n$. 

Similarly, if $|X_2| \ge n$, then we let $X_2'$ be a subset of exactly $n$ vertices of $X_2$, and find a cycle of length $2n$ in $H = G_2[X_2', Y_1]$ by applying Theorem~\ref{chvatal}. The argument is the same as in the previous paragraph.
\end{proof}

\subsubsection{Finding a cycle using Theorem~\ref{lasvergnas}}

Applying Lemma~\ref{bad-or-cycle}, each of the $\sqrt{\delta} n$ vertices $x\in \Bad$ is the center of a length-$4$ path $P(x)$. Let $A$ be the $2\sqrt{\delta} n$ vertices of $W_1'$ in these paths and $B$ be the $3\sqrt{\delta} n$ vertices of $W_2'$ in these paths (including the vertices in $\Bad$). Additionally, let $C$ be the set of $\sqrt{\delta}n$ vertices of $W_1' \setminus A$ with the most edges to $W_2'$ in $G_1$; by Lemma~\ref{lemma:good-vertices}, every $x \in W_1' \setminus (A\cup C)$ satisfies $d_1(x, W_2') \le \sqrt{\delta}n$.

Next, we will construct a bipartite graph $H$ by choosing subsets $W_1'' \subseteq W_1' \setminus (A\cup C)$ of size $n - 2\sqrt{\delta} n$, and $W_2'' \subseteq W_2' \setminus B$ of size $n-3\sqrt{\delta} n$; the edges of $H$ are the edges of $G_2[W_1'' \cup A, W_2'' \cup B]$, except that we artificially join every internal vertex of every path $P(x)$ to every vertex on the other side of $H$. We will apply Theorem~\ref{lasvergnas} to find a Hamiltonian cycle in $H$ containing all $q = 4\sqrt{\delta} n$ edges belonging to the paths $P(x)$, after choosing $W_1''$ and $W_2''$ to make sure that the hypotheses of this theorem hold.

In terms of our future choice of $(W_1'', W_2'')$, let $n_{i,j} = |V_i \cap W_j''|$. If $u \in V_i \cap W_1''$, then the degree of $u$ in $H$ is at least $n - n_{i,2} - \sqrt{\delta}n$: $u$ has at most $\sqrt{\delta}n$ edges to $W_2''$ that are in $G_1$, not $G_2$, and its degree is further reduced by the $n_{i,2}$ vertices of $W_2''$ that are also in $V_i$. Similarly, if $v \in V_i \cap W_2''$, then the degree of $v$ in $H$ is at least $n - n_{i,1} - \sqrt{\delta}n$.

Let $n_{*,1} \ge n_{**,1}$ be the two largest values of $n_{i,1}$ and let $n_{*,2} \ge n_{**,2}$ be the two largest values of $n_{i,2}$. As in the statement of Theorem~\ref{lasvergnas} let $u_1, u_2, \ldots, u_n$ be the vertices of $W_1'' \cup A$ and let $v_1, v_2, \ldots, v_n$ be the vertices of $W_2'' \cup B$, ordered by degree in $H$.

We begin with a lemma showing that some choices of $(W_1'', W_2'')$ are guaranteed to satisfy the conditions of Theorem~\ref{lasvergnas}:

\begin{lemma}\label{lemma:lv-conditions}
Theorem~\ref{lasvergnas} can be applied, letting us find a cycle of length $2n$ in $H$, if we can choose $W_1''$ and $W_2''$ to satisfy the following two conditions:
\begin{enumerate}
\item For each $i$, either $n_{i,1} + n_{i,2} \le n - 10\sqrt{\delta} n$, or $n_{i,1} = 0$.

\item For either $j=1$ or $j=2$, at most one value of $n_{i,j}$ exceeds $(\frac12 - 10\sqrt{\delta})n$.
\end{enumerate}
\end{lemma}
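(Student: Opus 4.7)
The plan is to apply Theorem~\ref{lasvergnas} to $H$ with $q=4\sqrt{\delta}n$; since the $q$ edges inside the paths $\{P(x):x\in\Bad\}$ already form $\sqrt{\delta}n$ vertex-disjoint paths, a Hamiltonian cycle of $H$ containing them yields the desired $C_{2n}$ in $G_2$. I would verify the Las Vergnas hypothesis by assuming, for contradiction, the existence of a non-edge $u_iv_j$ of $H$ satisfying $d_H(u_i)\le i+q$, $d_H(v_j)\le j+q$, and $d_H(u_i)+d_H(v_j)\le n+q$.

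Every internal vertex of every path $P(x)$ is artificially joined to the whole opposite side of $H$, so such a non-edge forces $u_i\in V_a\cap W_1''$ and $v_j\in V_b\cap(W_2''\cup\text{path endpoints})$ for some indices $a,b$, with the degree bounds $d_H(u_i)\ge n-n_{a,2}-\sqrt{\delta}n$ and $d_H(v_j)\ge n-n_{b,1}-\sqrt{\delta}n$ already established just before the lemma. If $a=b$ then $n_{a,1}>0$, so Condition 1 gives $n_{a,1}+n_{a,2}\le n-10\sqrt{\delta}n$ and hence $d_H(u_i)+d_H(v_j)\ge 2n-n_{a,1}-n_{a,2}-2\sqrt{\delta}n\ge n+8\sqrt{\delta}n>n+q$, a contradiction.

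In the remaining case $a\ne b$, I would assume without loss of generality that Condition 2 holds for $j=2$, and let $a^*$ denote the at most one index with $n_{a^*,2}>(\tfrac12-10\sqrt{\delta})n$. A short argument rules out $u_i\in V_{a^*}\cap W_1''$: otherwise $u_i$ lies in the lowest-degree tier on the $W_1''\cup A$ side (of size $n_{a^*,1}$), so $n-n_{a^*,2}-\sqrt{\delta}n\le d_H(u_i)\le i+q\le n_{a^*,1}+q$ would contradict Condition 1 on $a^*$. Hence $n_{a,2}\le(\tfrac12-10\sqrt{\delta})n$ and $d_H(u_i)\ge(\tfrac12+9\sqrt{\delta})n$. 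Together with $d_H(u_i)+d_H(v_j)\le n+q$, this yields $d_H(v_j)\le(\tfrac12-5\sqrt{\delta})n$ and hence $n_{b,1}\ge(\tfrac12+4\sqrt{\delta})n$; since two parts cannot both have $n_{i,1}\ge\tfrac12 n$, no other part on the opposite side supplies low-degree vertices, so $v_j$ lies in the unique low-degree tier $V_b\cap(W_2''\cup\text{endpoints})$. Therefore $j\le n_{b,2}+2\sqrt{\delta}n$, and Condition 1 on $b$ yields $n_{b,2}\le n-10\sqrt{\delta}n-n_{b,1}$, so $d_H(v_j)\le j+q\le n-4\sqrt{\delta}n-n_{b,1}$. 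Combined with $d_H(v_j)\ge n-n_{b,1}-\sqrt{\delta}n$, this gives $3\sqrt{\delta}n\le 0$, the desired contradiction.

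The principal obstacle is the case $a\ne b$: on its own, the bound $d_H(u_i)+d_H(v_j)\ge 2n-n_{a,2}-n_{b,1}-2\sqrt{\delta}n$ is insufficient, because Conditions 1 and 2 only loosely constrain $n_{a,2}+n_{b,1}$. Closing the loop requires the Chv\'atal-style rank constraint $d_H(v_j)\le j+q$ to be combined with the size bound $|V_b\cap(W_2''\cup\text{endpoints})|\le n_{b,2}+2\sqrt{\delta}n$ coming from Condition 1 on $b$, so that the upper and lower bounds on $d_H(v_j)$ collide.
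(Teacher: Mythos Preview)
Your argument has a genuine gap in the step ``A short argument rules out $u_i\in V_{a^*}\cap W_1''$''. You assert that if $u_i\in V_{a^*}\cap W_1''$ then $i\le n_{a^*,1}$, on the grounds that $u_i$ ``lies in the lowest-degree tier''. But the bound $d_H(u)\ge n-n_{a,2}-\sqrt{\delta}n$ is only a \emph{lower} bound on the degree; it does not determine the ordering. A vertex of $V_{a^*}\cap W_1''$ may well have actual degree larger than some vertex of $V_c\cap W_1''$ with $c\ne a^*$, and then it is preceded by vertices outside $V_{a^*}$, so $i>n_{a^*,1}$ is entirely possible. Thus the chain $d_H(u_i)\le i+q\le n_{a^*,1}+q$ is unjustified as written.

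The gap is fixable, but the repair is exactly the rank argument you carry out later for $v_j$ and which the paper uses symmetrically on both sides. From $d_H(u_i)\le i+q$ and the global minimum degree $n-n_{*,2}-\sqrt{\delta}n$ one gets $i\ge n-n_{*,2}-5\sqrt{\delta}n$; Condition~1 (applied to the part realising $n_{*,2}$) bounds the number of $W_1''$-vertices in that part by $n-n_{*,2}-10\sqrt{\delta}n$, so among $u_1,\dots,u_i$ there must be a vertex of $W_1''$ lying in some other part, whence $d_H(u_i)\ge n-n_{**,2}-\sqrt{\delta}n$. The paper then runs the identical argument on the $v_j$ side to obtain $d_H(v_j)\ge n-n_{**,1}-\sqrt{\delta}n$, and finishes in one line: the second-largest values satisfy $n_{**,1},n_{**,2}\le n/2$ automatically, and Condition~2 makes one of them at most $(\tfrac12-10\sqrt{\delta})n$, giving $d_H(u_i)+d_H(v_j)\ge n+8\sqrt{\delta}n>n+q+1$. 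Your more elaborate endgame for $v_j$ (forcing $n_{b,1}$ large, then bounding $j$ by $n_{b,2}+2\sqrt{\delta}n$ via Condition~1) is correct but unnecessary once both sides are handled by the same rank trick; note also that your case $a=b$ is then absorbed into the uniform bound and need not be treated separately.
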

\begin{proof}
Suppose that $u_i \in W_1'' \cup A$ and $d(u_i) \le i + q \le i + 4\sqrt{\delta} n$. The minimum $H$-degree of vertices in $W_1'' \cup A$ is $n - n_{*,2} - \sqrt{\delta}n$,  so we must have $i \ge n - n_{*,2} - 5\sqrt{\delta} n$. By Condition~1, at most $n - n_{*,2} - 10\sqrt{\delta} n$ vertices in $W_1''$ are in the same part as the largest part of $W_2''$; at most $2\sqrt{\delta} n$ vertices are endpoints of paths $P(x)$, so together these make up at most $n - n_{*,2} - 8\sqrt{\delta} n < i$ vertices. Therefore some of the vertices $u_1, \ldots, u_i$ are vertices of $W_1''$ in a different part, and therefore $d(u_i) \ge n - n_{**,2} - \sqrt{\delta}n$.

Similarly, suppose that $v_j \in W_2'' \cup B$ and $d(v_j) \le j + q \le j + 4\sqrt{\delta} n$. The minimum $H$-degree of vertices in $W_2'' \cup B$ is $n - n_{*,1} - \sqrt{\delta}n$, so we must have $j \ge n - n_{*,1} - 5\sqrt{\delta} n$. By Condition~1, at most $n - n_{*,1} - 10\sqrt{\delta} n + |B|$ vertices in $W_2''$ are in the same part as the largest part of $W_1''$, which is fewer than $j$. Therefore some of the vertices $v_1, \ldots, v_j$ are vertices of $W_2''$ in a different part, and therefore $d(v_j) \ge n - n_{**,1} - \sqrt{\delta}n$.

In such a case, we have $d(u_i) + d(v_j) \ge 2n - n_{**,1} - n_{**,2} - 2\sqrt{\delta}n$. We have $n_{**,1}, n_{**,2} \le \frac12 n$, and additionally by Condition~$2$, $n_{**,j} \le \frac12n - 10\sqrt{\delta} n$ for some $j$. Therefore $d(u_i) + d(v_j) \ge n + 8\sqrt{\delta}n \ge n + 4\sqrt{\delta} n + 1$, and the hypothesis of Theorem~\ref{lasvergnas} holds.
\end{proof}

It remains choose $W_1''$ and $W_2''$ so that they satisfy the conditions of Lemma~\ref{lemma:lv-conditions}, and to deal separately with cases where this is impossible.

\textbf{First, we consider the case in which all parts of $G$ have size at most $\frac54n$.} (By Property~\ref{w1w2-Gcases}, this automatically holds when $G$ has more than $3$ parts: if so, all parts of $G$ have size at most $n$.) Choose $W_2''$ arbitrarily. $W_1'$ must contain at least $N - (1+\delta)n_1 \ge N - n_1 - \delta n_1 \ge 2n-1 - 2\delta n$ vertices, of which only $2\sqrt{\delta}n$ vertices have been used by paths and $\sqrt{\delta} n$ more have been thrown away as $C$; therefore we have at least $2n-1 - 3\sqrt{\delta}n - 2\delta n$ choices for vertices in $W_1''$.

We set aside vertices of $W_1'$ which we forbid from being in $W_1''$. From each part, $V_i$, forbid either at least $|V_i| - (1 - 10\sqrt{\delta})n$ vertices, or else all vertices of $V_i \cap W_1'$, whichever is smaller. This forbids at most $(\frac14 + 10\sqrt{\delta}) n$ vertices from each part, and at most $10\sqrt{\delta} n$ vertices in the case $n_i \le n$. There are at most two parts with $n_i > n$, so we forbid at most $(\frac12 + 50\sqrt{\delta})n$ vertices. Now Condition~1 of Lemma~\ref{lemma:lv-conditions} will be satisfied no matter what: for each part $i$, we will either have $n_{i,1} + n_{i,2} \le (1 - 10\sqrt{\delta})n$, or else $n_{i,1} = 0$.

Next, we attempt to ensure that Condition~2 of Lemma~\ref{lemma:lv-conditions} holds. Call a part $V_i$ of $G$ \emph{$W_1''$-rich} if, after excluding the forbidden vertices, and vertices of $A \cup C$, there are still at least $20\sqrt{\delta}n$ vertices of $W_1'$ left in $V_i$; call it \emph{$W_1''$-poor} otherwise.

If there are at least three $W_1''$-rich parts, then we can choose $20\sqrt{\delta}n$ vertices from each of them for $W_1''$, and complete the choice of $W_1''$ arbitrarily. Condition~2 of Lemma~\ref{lemma:lv-conditions} must now hold for $j=1$: if we had $n_{*,1} \ge (\frac12 - 10\sqrt{\delta})n$ and $n_{**,1} \ge (\frac12 - 10\sqrt{\delta})n$, then together these two parts would contain all but $20\sqrt{\delta}n$ vertices of $W_1''$. This is impossible, since there is a third $W_1''$-rich part containing at least that many vertices of $W_1''$.

If there are not at least three $W_1''$-rich parts, we give up on Lemma~\ref{lemma:lv-conditions}, and satisfy the conditions of Theorem~\ref{lasvergnas} by a different strategy.

If $V_i$ is $W_1''$-poor, it must have many vertices of $W_2''$. More precisely, $V_i$ has at least $\min\{n, n_i\} - 10\sqrt{\delta}n$ vertices that we have not forbidden. Among these, there are up to $3\sqrt{\delta}n$ vertices which are in $A \cup C$, up to $3\sqrt{\delta}n$ vertices which are in $B$, and fewer than  $20\sqrt{\delta}n$ vertices that can be added to $W_1''$, so the remaining $\min\{n, n_i\} - 16\sqrt{\delta}n$ vertices must be in $W_2' \setminus B$.

Moreover, when $G$ is tripartite, $n_i \ge \frac34n-1$ for any part, so if a part is $W_1''$-poor, it contains at least $\frac34n - 16\sqrt{\delta}n-1$ vertices of $W_2' \setminus B$. When $G$ has more than three parts, at least two parts must be $W_1''$-poor; any two parts $V_i, V_j$ have $n_i + n_j > n$, so together, two $W_1''$-poor parts have at least $n - 32\sqrt{\delta}n$ vertices of $W_2'\setminus B$. In either case, there are one or two $W_1''$-poor parts which together contain at least $\frac23n$ vertices of $W_2' \setminus B$.

We change our choice of $W_2''$, if necessary, to include at least $\frac23n$ vertices from this $W_1''$-poor part or parts; otherwise, the choice is still arbitrary. Meanwhile, we choose no vertices from these parts from $W_1''$; this rules out at most $40\sqrt{\delta}n$ vertices in addition to our previous restrictions. Completing the choice of $W_1''$ arbitrarily, we are left with a pair $(W_1'', W_2'')$ that satisfies Condition~1 of Lemma~\ref{lemma:lv-conditions}, but possibly not Condition~2.

From Condition~$1$, we know that if $v_j \in W_2''$ satisfies $d(v_j) \le j+q$, we have $d(v_j) \ge n - n_{**,2} -\sqrt{\delta} n \ge \frac12 n - \sqrt{\delta}n$. Additionally, we know that for any $u_i \in W_1''$, $d(u_i) \ge \frac23n - \sqrt{\delta}n$, since there are at least $\frac23n$ vertices of $W_2''$ in a different part of $G$. Then $d(u_i) + d(v_j) \ge \frac76n - 2\sqrt{\delta} n \ge n + q + 1$, satisfying the hypothesis of Theorem~\ref{lasvergnas}.

\textbf{Next, we consider the case where $G$ has at most $3$ parts, with $N=3n-1$  and  $n_1 = n_2 > \frac54 n$.} Since $n_2\leq \frac{N}{2}=\frac{3n-1}{2}$, we have $n_3\geq (2n-1)-n_2> \frac13n$.

We begin this case by assuming that one of $W_1' \setminus (A \cup C)$ or $W_2' \setminus B$ intersects each part of $G$ in at least $20\sqrt{\delta} n$ vertices, and the other has at least $30\sqrt{\delta} n$ vertices outside each part of $G$; we will consider departures from this assumption later. This implies that for $j=1$ or $j=2$, we can choose $20\sqrt{\delta} n$ vertices from each part to add to $W_j''$, and match these by choosing $60\sqrt{\delta} n$ vertices to add to $W_{3-j}''$ with no more than $30\sqrt{\delta} n$ of these from one part. (No $V_i$ has more than $50\sqrt{\delta} n$ vertices chosen from it at this point.)

Then proceed by an iterative strategy. At each step, choose one vertex from $W_1' \setminus (A\cup C)$ not previously added to $W_1''$, and a vertex from $W_2' \setminus B$ not previously added to $W_2''$, so that these vertices are in different parts of $G$. Then add them to $W_1''$ and $W_2''$ respectively. This step is always possible when $|W_1'' \cup A|, |W_2'' \cup B| < n$: in this case, at least two parts still have unchosen vertices, since $|V_1|, |V_2| \ge \frac54n$ but fewer than $n$ vertices have been chosen. Additionally, choosing a pair of vertices, one from $W_1'$ and one from $W_2'$, is only impossible if $W_2'\setminus B$ has no more vertices, in which case $W_2''$ has reached its desired size.

Stop when $|W_2'' \cup B|=n$. When this happens, $W_1''$ still needs $\sqrt{\delta} n$ more vertices, and these can be chosen arbitrarily.

This process guarantees that Conditions 1 and 2 of Lemma~\ref{lemma:lv-conditions} hold. Before we begin iterating, we have chosen $60\sqrt{\delta} n$ vertices, but at most $50\sqrt{\delta} n$ from each part. After we begin iterating, we add at most one vertex from each part at each step. Therefore in the end, $n_{i,1} + n_{i,2} \le n - 10\sqrt{\delta} n$ for each $i$, satisfying Condition~1. Moreover, for some $j$, we added at least $20\sqrt{\delta} n$ vertices from each part to $W_j''$, ensuring that at most one value of $n_{i,j}$ can exceed $(\frac12 - 10\sqrt{\delta})n$ and satisfying Condition~2. 

Now we consider alternatives to our initial assumptions in this case. We cannot have $W_1' \setminus (A\cup C)$ have fewer than $30\sqrt{\delta} n$ vertices outside $V_i$ for any $i$, since it contains at least $2n-1 - 4\sqrt{\delta} n - 2\delta n$ vertices, and no $V_i$ is larger than $\frac53 n$. But it is possible that one of $V_1$ or $V_2$ contains all but $30\sqrt{\delta} n$ vertices of $W_2' \setminus B$; without loss of generality, it is $V_1$.

If so, add all vertices of $W_2' \setminus B$ in $V_1$ to $W_2''$, and choose the rest of $W_2''$ arbitrarily. The set $V_2 \cup V_3$  has $2n-1$ vertices, at most $30\sqrt{\delta} n + |B| = 33\sqrt{\delta} n$ of which are in $W_2'$, so we can pick all $n$ vertices of $W_1''$ from $V_2 \cup V_3$. Choose at least $10\sqrt{\delta} n$ of them from $V_3$ to satisfy Condition~1 of Lemma~\ref{lemma:lv-conditions} for $i=2$. Condition~1 also holds for $i=1$ (since $n_{i,1}=0$) and $i=3$ (since $n_3 < \frac34n$); Condition~$2$ holds for $j=2$.

Finally, we also violate the assumptions at the beginning of this case when neither $W_1' \setminus (A\cup C)$ nor $W_2'\setminus B$ have at least $20\sqrt{\delta} n$ vertices from each part of $G$. It is impossible that both of them have at most $20\sqrt{\delta} n$ vertices from $V_3$, so one of them must have at most $20\sqrt{\delta} n$ vertices from one of $V_1$ or $V_2$. 

If one of them (without loss of generality, $V_1$) contains at most $20\sqrt{\delta} n$ vertices of $W_1' \setminus (A \cup C)$, it must have at least $n$ vertices of $W_2'\setminus B$, since $|V_1| \ge \frac54n$, so choose all remaining vertices out of $W_2''$ from there. Outside $V_1$, we have at least $(2n-1 - 4\sqrt{\delta} n - 2\delta n) - 20\sqrt{\delta} n$ vertices of $W_1'\setminus (A\cup C)$, which leaves at most $24\sqrt{\delta} n + 2\delta n$ vertices we \emph{cannot} choose for $W_1''$. Choose $n$ vertices outside $V_1$ for $W_1''$, including at least $10\sqrt{\delta} n$ vertices of $V_3$. This satisfies Condition~1 for $i=1$ (since $n_{i,1} = 0$), $i=2$ (since $n_{i,2}=0$ and $n_{i,3} < n - 10\beta n$), and $i=3$ (since $n_3 < \frac34n$); Condition~$2$ holds for $j=2$.

If one of $V_1$ or $V_2$ (without loss of generality, $V_1$) contains at most $20\sqrt{\delta} n$ vertices of $W_2' \setminus B$, choose $n - 30\sqrt{\delta} n$ vertices of $W_1''$ from $V_1$ (satisfying Condition~1 for $i=1$ and Condition~2 by taking $j=1$). If $V_3$ contains at least $30\sqrt{\delta} n$ vertices of $W_1' \setminus (A\cup Z)$, take the remaining vertices of $W_1''$ from $W_3$. Otherwise, $V_3$ contains at least $60\sqrt{\delta} n$ vertices of $W_2'\setminus B$; choosing as many vertices as possible from $V_1 \cup V_3$ to add to $W_2''$, and the remaining vertices of $W_1''$ arbitrarily, we end up choosing no more than $n - 10\sqrt{\delta} n$ vertices from $V_2$. So Condition~1 holds for $i=2$ either because $n_{i,1}=0$ or because $n_{i,1}+n_{i,2} \le n - 10\sqrt{\delta} n$; Condition~1 holds for $i=3$ because $n_3 < \frac34n$.

\section{Dealing with $(\lambda,i,2)$-bad partitions when $N-n_1-n_2\geq 3$}\label{2bad}

A {\em cherry} is a path on three vertices. The {\em center} of a cherry is the vertex with degree $2$.

Suppose $N-n_1-n_2\geq 3$. By~(\ref{n-n1=2n-1})--(\ref{N4n-2}), we have two cases:

\textbf{1)} $N>3n-1$,  $s=3$,  $n_2+n_3 = 2n-1$ and  $ n_1 = n_2$ (i.e.,~(\ref{n-n1=2n-1}) holds), or

\textbf{2)} $N=3n-1$, $n_1\leq n$, $s\leq 5$,   and if $s\geq 4$, then $n_{s-1}+n_s \geq n+1$ (i.e.,~(\ref{N3n-1}) holds).





\subsection{The case when (\ref{n-n1=2n-1}) holds}


By (\ref{n-n1=2n-1}), $n_1=n_2>n$, $s=3$, and  $0<n_3 = 2n-1 - n_2 < n$.

\begin{lemma}\label{2n-1holds}
Let $G = K_{n_1, n_2, n_3}$ with $n_1 = n_2$ and $n_2 + n_3 = 2n-1$ be $2$-edge-colored with a $(\lambda, i, 2)$-bad partition. Then $G$ has a monochromatic cycle of length $2n$.
\end{lemma}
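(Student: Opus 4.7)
The plan is to exploit the color structure of the bad partition. Without loss of generality take $i=1$, so the bad partition provides a part $V_j$ with $n_j \ge (1-\lambda)n$ and a partition $V(G)\setminus V_j = U_1\cup U_2$ with $|U_1|,|U_2|\in[(1-\lambda)n,(1+\lambda)n]$ satisfying $|E(G_1[V_j,U_1])|\le \lambda n^2$ and $|E(G_2[V_j,U_2])|\le \lambda n^2$. Consequently the red bipartite graph $G_1[V_j,U_2]$ and the blue bipartite graph $G_2[V_j,U_1]$ are each nearly complete. Since $n_1=n_2$ and $n_2+n_3=2n-1$ force $n_2\ge n>n_3$, we split into Case~A ($V_j=V_3$, which compels $n_3\ge(1-\lambda)n$ and hence $n_2\le(1+\lambda)n$) and Case~B ($V_j\in\{V_1,V_2\}$; by the symmetry $n_1=n_2$ we assume $V_j=V_1$).

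In Case~B, $|V_1|=n_2\ge n$ and $|U_1|+|U_2|=2n-1$, so at least one of $|U_1|,|U_2|$ has size $\ge n$. By symmetry between the colors we may assume $|U_2|\ge n$ and aim for a red $C_{2n}$. After discarding from $V_1$ the vertices with unusually many red edges to $U_1$ or blue edges to $U_2$, and from $U_2$ those with unusually many blue edges to $V_1$, Lemma~\ref{lemma:good-vertices} guarantees each surviving vertex has red-degree to the opposite side close to $n$. Choosing $X\subseteq V_1$ and $Y\subseteq U_2$ of size $n$ from the survivors, Theorem~\ref{chvatal} produces a Hamiltonian cycle in $G_1[X,Y]$, that is, a red $C_{2n}$. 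If instead $|U_1|\ge n$, the same argument with the colors swapped yields a blue $C_{2n}$.

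In Case~A, $|V_3|=n_3<n$, so a bipartite Hamiltonian cycle inside $G_1[V_3,U_2]$ has length at most $2n_3<2n$. The shortfall $\Delta := n-n_3\le \lambda n$ must be covered by inserting $\Delta$ extra vertices from $V_1\cup V_2\setminus U_2$ into the cycle. Because edges from $V_3$ to such vertices in color $1$ are scarce, each inserted vertex $w$ must enter the cycle through a red cherry with center $w$ (say $w\in V_1$) and both leaves in $V_2$, the two cherry edges replacing a single cycle edge between two consecutive vertices of $V_2$ on the cycle. We form an auxiliary bipartite graph $H$ whose one side is $V_3$ together with the $\Delta$ inserted centers, and whose other side is an $n$-element subset $Y\subseteq V_1\cup V_2$ chosen with care, then apply Theorem~\ref{lasvergnas} with the $2\Delta$ cherry edges as the prescribed vertex-disjoint paths forced into a Hamiltonian cycle of $H$; this produces a red $C_{2n}$ of $G$. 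If the red bipartite graph between $V_1$ and $V_2$ is too sparse to supply such cherries, then the blue bipartite graph between $V_1$ and $V_2$ is dense, and the symmetric construction based on the blue-dense graph $G_2[V_3,U_1]$ produces a blue $C_{2n}$.

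The main obstacle lies in Case~A: the shortfall of $\Delta$ vertices on the $V_3$-side must be covered by edges between $V_1$ and $V_2$, and these are not controlled by the bad partition, so neither color is a priori dominant there. The resolution is a color dichotomy: by averaging, one color contains at least $\tfrac12 n_2^2$ of the edges between $V_1$ and $V_2$, and that density suffices to produce the required system of vertex-disjoint cherries. Coordinating the choice of $Y$ and the cherry system so that all degree, disjointness, and parity hypotheses of Theorem~\ref{lasvergnas} hold simultaneously is the technical heart of the proof.
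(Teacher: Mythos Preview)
Your overall architecture---split on whether $V_j$ is the small part $V_3$ or one of the large parts $V_1,V_2$, then insert a few ``cherry'' vertices and invoke Theorem~\ref{lasvergnas}---matches the paper's, but both cases are handled too optimistically and each has a genuine gap.

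\textbf{Case B.} You discard the atypical vertices from $V_1$ and from $U_2$, then pick $n$ survivors on each side. But under~(\ref{n-n1=2n-1}) we only know $n_1=n_2\ge n+1$, and the number of vertices of $V_1$ with (say) blue degree $\ge 0.1n$ to $U_2$ can be as large as $\Theta(\lambda n)\gg 1$. So after discarding you may have fewer than $n$ vertices left in $V_1$, and neither Chv\'atal nor Berge applies. The paper runs into exactly this obstruction: its ``$j\ne 3$'' analysis works by the simple argument only when $n_1\ge n+b$ (with $b=|B|\le 5\lambda n$), and for $n+1\le n_1\le n+b-1$ it has to fall back on the full four-claim machinery used for $j=3$. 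Your proposal does not account for this narrow but nonempty range.

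\textbf{Case A.} The dichotomy ``either there are $\Delta$ red cherries, or blue between $V_1$ and $V_2$ is dense and the symmetric construction works'' is too coarse. First, the cherries you need have centers in $U_1$ and leaves in $U_2$ (not ``$V_1$ versus $V_2$''), and your phrase about ``replacing a single cycle edge between two consecutive vertices of $V_2$'' is incoherent: in the bipartite base cycle no two $U_2$-vertices are consecutive. More importantly, even if red cherries are scarce---so that most $U_1$-vertices have high blue degree to $U_2$---the \emph{symmetric} blue construction requires $|U_1|\ge n$ on the $Y$-side, and this can fail. The paper therefore does \emph{not} use the mirror construction. Instead it proves four separate claims: many red cherries $U_2\to U_1$ give a red $C_{2n}$ (Claim~\ref{situation1}); otherwise, when $|U_2|\ge n-b$, a bespoke blue construction (Claim~\ref{situation2}) builds a blue $C_{2n}$ by first taking a long blue path in $G_2[V_3,U_2]$ and then absorbing the remaining $U_2$-vertices through $U_1$; and when $|U_2|<n-b$, one must look at the colour of edges \emph{inside} $U_1$ (between $V_1\cap U_1$ and $V_2\cap U_1$) and branch again (Claims~\ref{situation3} and~\ref{situation4}). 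Your averaging observation that one colour has $\ge \tfrac12 n_2^2$ edges between $V_1$ and $V_2$ does not distinguish between edges crossing $U_1$--$U_2$ and edges inside $U_1$ or inside $U_2$, and it is precisely this distinction that drives the case split.
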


In this section, we prove Lemma~\ref{2n-1holds}, but postpone technical details of how the monochromatic cycles are constructed in each of four cases; these details are given in Claims~\ref{situation1}--\ref{situation4}.

\begin{proof}[Proof of Lemma~\ref{2n-1holds}]
Without loss of generality, let $i = 2$; we call color $1$ red and color $2$ blue. 

We begin by assuming that in the $(\lambda,2,2)$-bad partition $(V_j,U_1,U_2)$, $j=3$. Later, in Subsection~\ref{j-not-3}, we discuss the modifications to the proof when $j\ne 3$.

Since $(V_j,U_1,U_2)$  is a $2$-bad partition, we know the following conditions hold:
\begin{enumerate}
\item[(i)] $|V_3| \ge (1-\lambda)n$.
\item[(ii)] $(1-\lambda)n \le |U_1| \le (1+\lambda)n$.
\item[(iii)] $(1-\lambda)n \le |U_2| \le (1+\lambda)n$.
\item[(iv)] $E(G_2[V_3,U_1]) \le \lambda n^2$.
\item[(v)] $E(G_1[V_3, U_2]) \le \lambda n^2$.
\end{enumerate}
If a vertex $u_1$ in $U_1$ has blue degree at least $\frac{n_{3}}{2}$ to $V_{3}$ then we move $u_1$ to $U_2$. If a vertex $u_2$ in $U_2$ has red degree at least $\frac{n_{3}}{2}$ to $V_3$ then we move $u_2$ to $U_1$. Since there are at most $3 \lambda n$ vertices in $U_1$ with blue degree at least $\frac{n_3}{2}$ to $V_3$ and there are at most $3 \lambda n$ vertices in $U_2$ with red degree at least $\frac{n_3}{2}$ to $V_3$, we moved at most $3 \lambda n$ vertices out of $U_1$ and $U_2$ respectively and moved at most $3 \lambda n$ vertices into $U_1$ and $U_2$ respectively. Thus, we may assume $|U_1| \ge |U_2|$, $|U_1| = n+a_1$, $|U_2| = n+a_2$, and $a_1 \ge 0$. 

Note that (iv) and (v) change to:
\begin{enumerate}
\item[(iv)] $|E(G_2[V_3,U_1])| \le 4 \lambda n^2$,
\item[(v)] $|E(G_1[V_3,U_2])| \le 4 \lambda n^2$.
\end{enumerate}

Let $|V_3| = n - a_3$, where $ a_3 \le 10 \lambda n$. Let $B$ be the set of vertices in $V_3$ with blue degree at least $0.9n$ to $U_1$ and $|B| = b$. Let $R$ be the set of vertices in $V_3$ with blue degree at most $0.05n$ to $U_1$. By Condition~(iv), we know 
\[
	|B| \le 5\lambda n \text{ and } |R| \ge n - a_3 - 80 \lambda n.
\]
Let $C$ be a maximum collection of vertex-disjoint red cherries with center in $U_2$ and leaves in $U_1$. If there at least $m := a_3 + b$ cherries in $C$, then we use them, together with the edges between $U_1$ and $V_3$, to find a red cycle of length $2n$; this is done in Claim~\ref{situation1}.

Otherwise, we assume that $|C| \le m-1$: there are at most $m-1$ red cherries from $U_2$ to $U_1$. Every vertex in $U_2 - V(C)$ has red degree at most $2m-1$ to $U_1$, since otherwise we have a larger collection of red cherries.  

When $|U_2| = n + a_2 \ge n-b$, we can find a blue cycle using edges between $U_2$ and $V_3$, as well as enough edges between $U_1$ and $B$ to make up for the size of $U_2$ when $|U_2| < n$. This is done in Claim~\ref{situation2}. 

Otherwise, we assume that $|U_2| \le n-b-1$; in other words,
\begin{equation}\label{a2b}
a_2 \le -(b + 1).
\end{equation}
Our goal is now to use edges within $U_1$ to find a monochromatic cycle. Without loss of generality, we may assume that $|V_1 \cap U_1| \ge |V_2 \cap U_1|$. We first argue that $U_1 \cap V_2$ cannot be too small.

Earlier, we defined $|U_1| = n+a_1$, $|U_2| = n+a_2$, $|V_3| = n-a_3$. Since $|V_1| + |V_3| = |V_2| + |V_3| = 2n-1$ and $U_1 \cup U_2 = V_1 \cup V_2$, we have
\[
	2n + a_1 + a_2 = |V_1| + |V_2| = 4n-2 - 2|V_3| = 2n + 2a_3 - 2
\]
or 
\begin{equation}\label{a1a2a3}
a_1+a_2 = 2a_3 - 2.
\end{equation}
Therefore
\begin{align*}
|U_1 \cap V_2| &\ge |U_1| - |V_1| = |U_1| - \frac{|U_1| + |U_2|}{2}
	= n + a_1 - n - \frac{a_1 + a_2}{2} \\
	&= \frac{a_1-a_2}{2} 
	= a_3 - a_2 - 1
	= (b + a_3) + (-b - a_2) - 1.
\end{align*}
There are two possibilities for the vertices of $U_1 \cap V_2$:
\begin{itemize}
\item There are at least $m = b + a_3$ vertices in $U_1 \cap V_2$ which have red degree at least $0.1n$ to $U_1 \cap V_1$. In this case, we use Claim~\ref{situation3} to find a red cycle of length exactly $2n$.

\item There are at least $m' := -b - a_2$ vertices in $U_1 \cap V_2$ which have blue degree at least $|U_1 \cap V_1| - 0.1n \ge 0.4n$ to $U_1 \cap V_1$. In this case, we use Claim~\ref{situation4} to find a blue cycle of length exactly~$2n$.
\end{itemize}
One of these must hold, since $|U_1 \cap V_2| \ge m + m' - 1$, while by \eqref{a2b}, $m' = -b - a_2 \ge 1$: therefore there are either $m$ vertices for Claim~\ref{situation3} or $m'$ vertices for Claim~\ref{situation4}. In either case, we obtain a monochromatic cycle of length exactly $2n$, completing the proof.
\end{proof}

\subsubsection{The case of many cherries: $|C| \ge m$}

Recall that $C$ is a maximum collection of vertex-disjoint red cherries with centers in $U_2$ and leaves in $U_1$; $m= b - a_3$, where $b = |B|$ and $a_3 = n - |V_3|$.

\begin{claim}\label{situation1}
If $|C| \ge m$, then we have a red cycle of length exactly $2n$.
\end{claim}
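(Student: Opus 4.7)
The plan is to construct the red $C_{2n}$ by combining $m = a_3 + b$ of the cherries in $C$ with a red alternation between $V_3 \setminus B$ and $U_1$. The arithmetic works out: $|V_3 \setminus B| = n-m$, so a plain alternating cycle on $V_3 \setminus B$ and $U_1$ would contribute $2(n-m)$ vertices, and each cherry insertion $x_i y_i x_i'$ replaces a single $U_1$-slot by a $3$-vertex path, adding $2$ vertices per cherry, for a total of $2(n-m) + 2m = 2n$. This cycle will use every vertex of $V_3 \setminus B$, every chosen cherry center $y_i \in U_2$, both leaves $x_i, x_i' \in U_1$ of each chosen cherry, and $n - 2m$ further ``single'' vertices of $U_1$.

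Concretely, first pick $m$ cherries from $C$, preferring cherries whose leaves $x_i, x_i'$ have blue degree less than $\sqrt{\lambda}\, n$ to $V_3$; call such leaves \emph{good}. From $|E(G_2[V_3, U_1])| \le 4\lambda n^2$ at most $O(\sqrt{\lambda}\, n)$ vertices of $U_1$ can fail to be good, so whenever $|C| \ge m + O(\sqrt{\lambda}\, n)$ we simply select $m$ cherries all of whose leaves are good; the boundary case $|C|$ barely exceeding $m$ is handled by an exchange argument that swaps a bad leaf for another good red neighbor of the cherry center in $U_1$. Second, form the auxiliary bipartite graph $\hat{H}$ with parts $L = V_3 \setminus B$ and $R$, both of size $n - m$, where $R$ consists of $n - 2m$ ``single'' vertices from $U_1 \setminus \bigcup_i \{x_i, x_i'\}$ chosen to have small blue degree to $V_3$, together with $m$ ``super-leaves'' $\hat{x}_1, \dots, \hat{x}_m$, each $\hat{x}_i$ contracting the cherry $x_i y_i x_i'$ into a single node. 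The edges of $\hat{H}$ are the red $G_1$-edges from $L$ to the single part of $R$, together with an edge between $v \in L$ and $\hat{x}_i$ whenever both $v x_i$ and $v x_i'$ are red in $G$. A Hamilton cycle in $\hat{H}$ then expands, by replacing each super-leaf $\hat{x}_i$ with the path $x_i y_i x_i'$ (in either orientation, thanks to the stringent edge condition), into the desired red $C_{2n}$ in $G$.

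The remaining step is to verify that $\hat{H}$ has a Hamilton cycle, which follows from Theorem~\ref{chvatal}: by the choice of $R$, every single vertex of $R$ has degree at least $n - m - O(\sqrt{\lambda}\, n)$ in $\hat{H}$, every good super-leaf has degree at least $n - m - O(\sqrt{\lambda}\, n)$ by inclusion-exclusion on the red $V_3 \setminus B$-neighborhoods of its two leaves, and all but $O(\sqrt{\lambda}\, n)$ vertices of $L$ have comparably high degree; the Chv\'atal condition is then an easy check. The main obstacle throughout is precisely the possibility of a super-leaf of low degree, caused by a cherry leaf with large blue degree to $V_3$; the preferential selection of cherries in the first step, and if necessary the exchange argument, is designed exactly to keep every super-leaf's degree in $\hat{H}$ above the Chv\'atal threshold.
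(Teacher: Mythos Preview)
Your overall plan---combine $m$ cherries with a near-Hamilton bipartite structure on $(V_3\setminus B)\times U_1$---matches the paper's, but the implementation via contracting each cherry into a ``super-leaf'' has a genuine gap.

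The problem is the super-leaf degree. You declare $v\hat x_i$ an edge of $\hat H$ only when \emph{both} $vx_i$ and $vx_i'$ are red. Now, each leaf $x_i\in U_1$ is only guaranteed red degree $> n_3/2$ to $V_3$ (this is all the rebalancing step gives). Inclusion--exclusion in $L=V_3\setminus B$ then yields
\[
\deg_{\hat H}(\hat x_i)\ \ge\ 2\Bigl(\tfrac{n_3}{2}-b\Bigr) - (n-m)\ =\ -b,
\]
which is vacuous: a super-leaf can have degree $0$ in $\hat H$, killing Hamiltonicity outright. Your remedy is the ``exchange argument'': if a leaf $x_i$ is bad (blue degree $\ge\sqrt{\lambda}\,n$ to $V_3$), swap it for a good red neighbour of the center $y_i$ in $U_1$. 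But nothing in the setup bounds $d_R(y_i,U_1)$ from below---a cherry center $y_i\in U_2$ may have exactly two red neighbours in $U_1$, both bad, so no swap is available. Since $m\le 15\lambda n$ while up to $4\sqrt{\lambda}\,n$ vertices of $U_1$ can be bad, when $|C|=m$ you may be forced to use such a cherry, and the argument breaks.

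The paper sidesteps this entirely. Instead of contracting, it keeps the centers $u_1,\dots,u_m$ as separate vertices, sets $X=(V_3\setminus B)\cup\{u_1,\dots,u_m\}$ and $Y\subseteq U_1$ with $\{v_1,\dots,v_{2m}\}\subseteq Y$, adds artificial edges from each $u_i$ to all of $Y$, and then applies Las~Vergnas (Theorem~\ref{lasvergnas}) with the $2m$ cherry edges as the prescribed set. The artificial edges make the centers' degrees harmless; the cherry \emph{leaves} are just ordinary vertices of $Y$ with red degree $>n_3/2 - b \ge 0.4n$ to $X$, so no ``goodness'' selection or exchange is needed at all. Since every center already has degree~$2$ within the prescribed edge set, no artificial edge can appear in the resulting Hamilton cycle, and one reads off a genuine red $C_{2n}$.
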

\begin{proof}
We do the following steps. Let $C' \subseteq C$ be a collection of $m$ red cherries with centers in $U_2$ and leaves in $U_1$. Let $ \{u_1, \ldots, u_m\} = V(C') \cap U_2$ and $\{v_1, \ldots, v_{2m}\} = V(C') \cap U_1$ such that each $v_{2i-1}u_{i}v_{2i}$ is a cherry with center $u_i$, where $1 \le i \le m$. 

To find a cycle of length $2n$ in $G_R$ that contains the edges of $C'$, we will apply Theorem~\ref{lasvergnas} to an appropriately chosen bipartite graph.

First, create an auxiliary graph $G'_R$ by starting with $G_R$ and adding every edge between $\{u_1, \dots, u_m\}$ and $U_1$. This will help us  to satisfy the degree conditions of Theorem~\ref{lasvergnas}; however, these artificial edges will never be used by a cycle containing all the edges of $C'$, since each of $\{u_1, \dots, u_m\}$ already has degree $2$ in $C'$.

Second, let $X = (V_3 - B) \cup \{u_1, u_2, \dots, u_m\}$ (a set of $n$ vertices total) and let $Y \subseteq U_1$ be any set of size $n$ such that $\{v_1, \dots, v_{2m}\} \subseteq Y$. We check that the hypotheses of Theorem~\ref{lasvergnas} apply to $G'_R[X,Y]$.

Order vertices in $X$ and $Y$ separately by their degree from smallest to largest. Since vertices in $Y$ have red degree at least $\frac{n_3}{2}-b \ge 0.4n$ to $X$ and at most $100\lambda n \ll 0.001n$ vertices in $Y$ have blue degree at least $0.04n$ to $X$, the smallest index $k$ such that $d_R(y_k) \le k+q$ satisfies $d_R(y_k) \ge 0.95n$. Since vertices in $X$ have blue degree at most $0.9n$ to $U_1$, they have red degree at least $n-0.9n = 0.1n > 0.09n$ to $Y$. The smallest index $j$ such that $d_R(x_j) \le j+q$ satisfies $d_R(x_j) \ge 0.09n$. By Theorem~\ref{lasvergnas} and $0.09n+0.95n > n+q+1$, we can find a Hamiltonian cycle in $G'_R[X,Y]$ of length $2n$ containing the edges of $C'$, which is a cycle of length $2n$ in $G_R$.
\end{proof}

\subsubsection{The case of large $U_2$: $|U_2| \ge n-b$}

Recall that $|U_2| = n + a_2$, $B$ is the set of vertices in $V_3$ with blue degree at least $0.9n$ to $U_1$, and $b = |B|$. 

\begin{claim}\label{situation2}
If $b \ge -a_2$ (in other words, if $|U_2| = n+a_2 \ge n-b$), then we have a blue cycle of size exactly $2n$.
\end{claim}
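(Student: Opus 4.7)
The plan is to construct the blue cycle $C_{2n}$ as a Hamiltonian cycle in a bipartite auxiliary graph $H$, applying Theorem~\ref{lasvergnas} with a small pre-selected set of forced edges. The hypothesis $b \ge -a_2$ is exactly what lets us complete the ``$Y$-side'' of the cycle with $-a_2$ vertices from $U_1$, accessed through $B$, when $|U_2| = n+a_2 < n$.

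First, using $b \ge -a_2$ and the fact that every $v \in B$ has at least $0.9n$ blue neighbors in $U_1$, I would greedily select a blue matching $\{v_i u_i : 1 \le i \le -a_2\}$ in $G_2[B, U_1]$; set $Y_1 = \{u_1, \ldots, u_{-a_2}\}$. Next, let $X = V_3 \cup X_1$ and $Y = U_2 \cup Y_1$, where $X_1 \subseteq U_1 \setminus Y_1$ satisfies $|X_1| = a_3$, so that $|X| = |Y| = n$. The set $X_1$ is chosen to lie in a single part of $V_1 \cup V_2$ opposite to the part containing most of $Y_1$ (so that $X_1$-to-$Y_1$ edges exist in $G$), and among such $U_1$-vertices we take those minimizing red edges to $U_2 \cup Y_1$, via Lemma~\ref{lemma:good-vertices}. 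Let $H$ consist of all blue edges of $G$ between $X$ and $Y$. Finally, apply Theorem~\ref{lasvergnas} to $H$ with $q = -a_2$ and forced edges $\{v_i u_i\}$ (which form $-a_2$ vertex-disjoint paths of length one). The resulting Hamilton cycle in $H$ is a blue cycle of length exactly $2n$ in $G$.

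The main technical obstacle is verifying the degree hypothesis of Theorem~\ref{lasvergnas}: whenever $xy \notin E(H)$, $d_H(x) \le \mathrm{rank}(x) + q$, and $d_H(y) \le \mathrm{rank}(y) + q$, we need $d_H(x) + d_H(y) \ge n + q + 1 = n - a_2 + 1$. For typical $v \in V_3$ and $u \in U_2$, the bounds $|E(G_1[V_3, U_2])| \le 4\lambda n^2$ and Lemma~\ref{lemma:good-vertices} give $d_H(v) \ge n + a_2 - O(\lambda n)$ and $d_H(u) \ge n - a_3 - O(\lambda n)$, whose sum comfortably exceeds $n - a_2 + 1$ since $|a_2|, |a_3| = O(\lambda n)$. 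The delicate case is a non-edge involving a vertex of $Y_1$, whose blue degree to $X$ can be small: then its rank in $Y$ is correspondingly small, so the rank-condition forces its non-edge partner to sit at a high rank in $X$, where by our choice of $X_1$ the blue degree is near-maximal. This matches the flavor of the analysis in Claim~\ref{situation1}. Once the degree hypothesis is verified, Theorem~\ref{lasvergnas} delivers the Hamilton cycle, and we are done.
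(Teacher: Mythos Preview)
Your proposal has a genuine gap in the verification of the Las Vergnas hypothesis when $a_2<0$. The vertices $u_i\in Y_1\subseteq U_1$ have almost no blue neighbours in $X=V_3\cup X_1$: condition~(iv) gives $|E(G_2[V_3,U_1])|\le 4\lambda n^2$, so a typical (and possibly every) $u\in U_1$ satisfies $d_B(u,V_3)=O(\lambda n)$, while $|X_1|=a_3=O(\lambda n)$ as well. Hence $d_H(u_i)$ can be as small as $1$ (just the forced edge $v_iu_i$ itself). Now pair such a $u_i$ with $x_n\in X$ of maximal $H$-degree: both rank conditions $d_H(x_n)\le n+q$ and $d_H(u_i)\le 1+q$ hold, yet $d_H(x_n)+d_H(u_i)\le n+1<n+q+1$, so Theorem~\ref{lasvergnas} is inapplicable. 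Your remark that a low-degree $u_i$ ``forces its non-edge partner to sit at a high rank in $X$'' misreads the theorem: the two rank conditions are completely independent, and a high rank for $x_j$ cannot push $d_H(x_j)$ above $n$.

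The analogy with Claim~\ref{situation1} fails exactly here. In that claim each forced vertex is the \emph{centre} of a cherry and so has degree~$2$ among the forced edges; one can then add artificial edges and the Hamilton cycle will never use them. In your setup each $u_i$ has degree~$1$ among the forced edges, so its second cycle-neighbour must be a real blue edge into $X$, and none need exist beyond $v_i$. The paper's proof avoids this entirely: it never puts $U_1$-vertices into a bipartite Hamiltonicity problem. Instead it threads the $|a_2|$ needed $B$-vertices into a short explicit path $P_2=r_0z_1r_1\cdots z_{|a_2|}r_{|a_2|}$ with connectors $r_i\in V_1\cap U_1$ (available since each $z_i\in B$ has $d_B(z_i,U_1)\ge 0.9n$), builds a long path via Theorem~\ref{berge} on $G_B[X,Y]$ with $X\subseteq V_3\setminus B$ and $Y\subseteq U_2$, absorbs the remaining $U_2$-vertices through $V_1\cap U_1$ using that $U_2\setminus V(C)$ has red degree at most $1$ into $U_1\setminus V(C)$, and glues the three pieces by hand.
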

\begin{proof}
Let $c := |C|$; let $V(C) \cap U_2 = \{u_1,\ldots,u_{c}\}$ and $V(C) \cap U_1 = \{v_1,v_2,\ldots,v_{2c}\}$. Let $B_2$ be the collection of vertices in $V_3-B$ with red degree at most $0.1n$ to $U_2$. By Condition~(v), 
\[
	q:=|B_2| \ge n - a_3 - 40\lambda n - b.
\]
Since $2n_1 = |U_1| + |U_2| = 2n+a_1+a_2,$ we know 
\[
|U_2 \cap V_2| = n_1 - |U_1 \cap V_2| \ge n_1 - \frac{n+a_1}{2} = n+ \frac{a_1+a_2}{2}-\frac{n}{2}-\frac{a_1}{2}=\frac{n+a_2}{2}
\]
and thus 
\begin{equation}\label{u2v1}
|U_2 \cap V_1| \le n+a_2 - \frac{n+a_2}{2} =\frac{n+a_2}{2}.
\end{equation}

\textbf{Step 1:} We first include $0.8n$ vertices in $V_3$ and $0.8n$ vertices in $U_2$ (all of $V_1 \cap U_2$ and $V(C)$) by Theorem 1.

\textbf{Details:} Since $|B_2| \ge n-a_3 - 40 \lambda n - b$, we take a set $X \subseteq B_2$ such that $|X| = 0.8n$. By~\eqref{u2v1}, we can take a set $Y \subseteq U_2$ such that $V_1 \cap U_2 \subseteq Y$, $V(C) \in Y$, and $Y = 0.8n$.

Now we consider $G_B[X,Y]$ and we order vertices in $X$ and $Y$ separately by their degree from smallest to largest. Since vertices in $Y$ have blue degree at least $0.8n - \frac{n_3}{2} > 0.2n$ to $X$, the smallest index $k$ such that $d_B(y_k) \le k+1$ satisfies  $d_B(y_k) \ge 0.2n-1$. Since vertices in $X$ have red degree at most $0.1n$ to $U_2$, they have blue degree at least $0.8n-0.1n = 0.7n$ to $Y$. The smallest index $j$ such that $d_B(x_j) \le j+1$ satisfies $d_B(x_j) \ge 0.7n-1$. By Theorem~\ref{berge} and $0.7n-1+0.2n-1 > 0.8n+2$, we can find a Hamiltonian red path $P_1'$ from $x \in X$ to some vertex $y \in Y - V_1 - V(C)$ in $G_B[X,Y]$ of length $1.6n-1$.

Since $x \in X \subseteq B_2$, $$d_B(x,U_2-Y) \ge n+a_2-0.8n-0.1n > 0.05n.$$ We extend the path $P_1'$ to $P_1$ of length $1.6n$ by adding a blue edge $xy'$ such that $y' \in U_2 - Y$.

\textbf{Step 2:} Use $\min\{0,-a_2\}$ vertices in $B$ to obtain a blue path. (We can skip this step if $a_2 \ge 0$.)

\textbf{Details:} Assume $a_2<0$; since $b \ge -a_2$, let $Z:=\{z_1, \ldots, z_{|a_2|}\} \subseteq B$. 

Since $$|V_1 \cap U_1| \ge \frac{n+a_1}{2} \ge |V_2 \cap U_1|,$$ each vertex in $B$ has blue degree at least $0.9n - |V_2 \cap U_1|$ to $U_1 \cap V_1$. Therefore, $$0.9n - |V_2 \cap U_1| \ge 0.9n - (n+a_1 - |V_1 \cap U_1|) = |V_1 \cap U_1| - a_1 - 0.1n \ge \frac{3}{4} |V_1 \cap U_1|.$$

We can find for each pair $(z_i,z_{i+1})$ a common neighbor $r_i \in V_1 \cap U_1 - V(C)$ where $1 \le i \le |a_2|-1$, a blue neighbor $r_0$ of $z_1$, a blue neighbor $r_{|a_2|}$ of $z_{|a_2|}$ such that $r_0, \ldots, r_{|a_2|}$ are all distinct.

We obtain a blue path $$P_2=r_0z_1r_1\ldots z_ir_i \ldots z_{|a_2|}r_{|a_2|}$$ of length $2|a_2|$.

Since $y'$ has at most one red neighbor to $U_1-V(C)$, at least one of $\{r_0, r_{|a_2|}\}$ is a blue neighbor of $y'$. We may assume $r_{|a_2|}y'$ is blue.

\textbf{Step 3:} Include the rest of vertices in $U_2$ to $U_1$.

\textbf{Details:} We proceed differently depending on whether $a_2 \ge 0$.

\begin{itemize}
\item If $a_2<0$ then we do the following. Let $K:=(U_2 - Y - \{y'\}) \cup \{y\} = \{y, f_1, \ldots, f_{k-1}\}$. Note that $k = |K| = n+a_2 - 0.8n = 0.2n + a_2$ and $K \subseteq V_2 \cap U_2 - V(C)$. Since each vertex in $K$ has at most one red neighbor to $U_1 - V_2 - V(C) - \{r_0, r_1, \ldots, r_{|a_2|}\}$, we find for $(y,f_1)$ a blue common neighbor $h_0 \in U_1 - V_2 - V(C) - \{r_0, r_1, \ldots, r_{|a_2|}\}$ and each pair $(f_i,f_{i+1})$ a distinct blue common neighbor, $h_i$, in $U_1 - V_2 - V(C) - \{r_0, r_1, \ldots, r_{|a_2|}\}$ where $ 1 \le i \le k-2$. We obtain a blue path $$P_3 = yh_0f_1\ldots f_ih_if_{i+1} \ldots f_{k-1}$$ of size $2k - 2 = 0.4n + 2a_2 - 2$.

We may assume $f_{k-1}r_0$ is blue since $f_{k-1}$ has only one red neighbor to $V_1 \cap U_1 - V(C)$ and there are many choices when we choose $r_0$ to connect with $z_1$. 

Finally, we connect $P_2$ and $P_1$ by adding the edge $r_{|a_2|}y'$, glue the paths $P_1$ and $P_3$ at $y$, then add the edge $f_{k-1}r_0$ to complete a blue cycle of length exactly $$2|a_2|+1+1.6n+0.4n + 2a_2-2+1=2n.$$

\item If $a_2 \ge 0$ then  in the previous argument we take $K = \{y,y',f_1, \ldots, f_{k-2}\}$ of size $0.2n+1$ and find common neighbors $h_0$ for $(y,f_1)$, $h_i$ for $(f_i,f_{i+1})$ where $1 \le i \le k-3$, and $h_{k-2}$ for $(f_{k-2},y')$. 
\end{itemize}
In either case, we obtain a path 
\[
	P_3= yh_0f_1\ldots f_ih_if_{i+1} \ldots f_{k-2}h_{k-2}y'
\]
of size $2k - 2 = 0.4n$. We glue $P_1$ and $P_3$ at $y$ and $y'$ to obtain a blue cycle of length exactly $1.6n+0.4n = 2n.$
\end{proof}

\subsubsection{Handling many vertices in $U_1 \cap V_2$  incident to red edges}

We will find a red cycle. 
Note that the size of $V_2 \cap U_1$ is at least $n+a_1-n_1.$

\begin{claim}\label{situation3}
If there are at least $m=b+a_3$ vertices in $U_1 \cap V_2$ of red degree at least $0.1n$ to $U_1 \cap V_1$, then we have a red cycle of length exactly $2n$.
\end{claim}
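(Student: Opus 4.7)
The plan is to adapt the strategy of Claim~\ref{situation1}, now using cherries inside $U_1$ (with centers in $U_1\cap V_2$ and leaves in $U_1\cap V_1$) instead of cherries with centers in $U_2$. From the hypothesized $m=b+a_3$ vertices in $U_1\cap V_2$ of red degree at least $0.1n$ to $U_1\cap V_1$, and since $2m=O(\lambda n)\ll 0.1n$, a greedy procedure yields $m$ vertex-disjoint red cherries $v_{2i-1}u_iv_{2i}$ with centers $u_i\in U_1\cap V_2$ and leaves $v_{2i-1},v_{2i}\in U_1\cap V_1$. Set $X=(V_3-B)\cup\{u_1,\ldots,u_m\}$, so that $|X|=(n-a_3-b)+(a_3+b)=n$.

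Next, let $Y$ be an $n$-subset of $U_1\setminus\{u_1,\ldots,u_m\}$ containing all $2m$ cherry leaves and avoiding the $\sqrt{\lambda}\,n$ vertices of $U_1$ with the largest blue degree to $V_3$. By Lemma~\ref{lemma:good-vertices} applied to $G_2[V_3,U_1]$ (which has at most $4\lambda n^2$ edges), each $y\in Y$ satisfies $d_B(y,V_3)\le 4\sqrt{\lambda}\,n$, hence $d_R(y,V_3-B)\ge n-a_3-b-4\sqrt{\lambda}\,n$. As in Claim~\ref{situation1}, construct an auxiliary graph $G'_R$ from $G_R$ by adding every edge between $\{u_1,\ldots,u_m\}$ and $Y$; in any Hamiltonian cycle of $G'_R[X,Y]$ that contains the $2m$ cherry edges, these artificial edges are unused, since each $u_i$ already has degree $2$ from its cherry.

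Finally, apply Theorem~\ref{lasvergnas} to $G'_R[X,Y]$ with $q=2m$. In $X$, each $u_i$ has degree $|Y|=n$, each $x\in R\cap X$ has red degree at least $0.95n$ to $Y$, and the at most $80\lambda n$ vertices of $V_3-B-R$ have red degree at least $0.1n$ to $Y$; since $80\lambda n+q\ll 0.1n$, the bound $d_R(x_i)\le i+q$ cannot hold for any $x_i\in V_3-B-R$, so whenever it does hold we have $d_R(x_i)\ge 0.9n$. In $Y$, every vertex has degree at least $n-5\sqrt{\lambda}\,n$ in $G'_R[X,Y]$ (counting both its red edges to $V_3-B$ and the $m$ artificial edges from the $u_i$), so whenever $d_R(y_j)\le j+q$ we also have $d_R(y_j)\ge n-5\sqrt{\lambda}\,n$. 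Hence every non-adjacent pair $(x_i,y_j)$ satisfying both degree bounds satisfies $d_R(x_i)+d_R(y_j)\ge 0.9n+n-O(\sqrt{\lambda}\,n)\ge n+q+1$, verifying the hypothesis of Theorem~\ref{lasvergnas}. The resulting Hamiltonian cycle in $G'_R[X,Y]$ has length $2n$, uses no artificial edges, and is therefore a red cycle in $G$ of length exactly $2n$. The main technical obstacle is a narrow boundary case in which $|U_1\setminus\{u_1,\ldots,u_m\}|<n$, arising only when $a_3=0$ and $a_1=b-1$ (from $a_1+a_2=2a_3-2$ and $a_2\le-b-1$); this requires a small local adjustment, such as extending one cherry into a longer alternating path or absorbing a $B$-vertex into $Y$ together with appropriate artificial edges.
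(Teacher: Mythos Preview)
Your strategy of directly mimicking Claim~\ref{situation1}---placing the cherry centers $u_1,\ldots,u_m$ in $X$ alongside $V_3-B$ and applying Theorem~\ref{lasvergnas} with artificial edges---is sound and in fact cleaner than the paper's proof. The paper takes a longer detour: it first matches the cherry leaves $v_2,\ldots,v_{2m}$ into $R\subseteq V_3$, then finds common red neighbours $g_i\in U_1$ for consecutive matched vertices, assembling a path $P_1$ of length $6m-3$; only afterwards does it apply Theorem~\ref{berge} to sets of size $n-3m+2$ and glue. Your one-shot Las~Vergnas application avoids this stitching entirely.

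There is, however, a genuine gap in your construction of $Y$. You require $Y$ to avoid the $\sqrt{\lambda}\,n$ vertices of $U_1$ with highest blue degree to $V_3$, but you have only $|U_1\setminus\{u_1,\ldots,u_m\}|=n+a_1-m$ vertices to choose from, and $a_1+a_2=2a_3-2$ together with $a_2\le -b-1$ gives only $a_1\ge m+a_3-1$. Since $a_3$ can be as small as $2$ (when $n_1=n+1$), this leaves just $n+1$ candidates---not enough to discard $\sqrt{\lambda}\,n$ of them and still reach $|Y|=n$. The fix is to \emph{not} insist on avoiding those vertices. By the standing hypothesis every $y\in U_1$ has $d_R(y,V_3)\ge n_3/2$, so every $y\in Y$ has degree at least $(n-a_3)/2-b+m=(n+a_3)/2\ge n/2$ in $G'_R[X,Y]$, while at most $|B'|\le 80\lambda n$ vertices of $Y$ have degree below $0.95n$. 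Then whenever $d(y_j)\le j+q$ one has $j\ge n/2-q>80\lambda n$, forcing $d(y_j)\ge 0.95n$, and the Las~Vergnas inequality follows exactly as in your argument for the $X$-side.

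Finally, your ``narrow boundary case'' is a red herring. Since $n_3<n$ strictly here, $a_3\ge 1$ always (indeed $a_3\ge 2$), so $a_1\ge m$ and $|U_1\setminus\{u_1,\ldots,u_m\}|\ge n$ without exception; the case $a_3=0$ never arises. The only real obstruction is the one above, and it is repaired by a degree argument, not by any local modification of the cherry paths.
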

\begin{proof}
Let $B'$ be the collection of vertices in $U_1$ with blue degree at least $0.05n$ to $V_3$. By (iv), we have $$|B'| \le 80\lambda n.$$

\textbf{Step 1:} We first find a collection of red cherries $C_3$ with center in $U_1 \cap V_2$ and leaves in $U_1 \cap V_1 - B'$ of size $b + a_3=:m$.

\textbf{Details:} Since there are at least $m$ vertices in $U_1 \cap V_2$ of red degree at least $0.1n$ to $U_1 \cap V_1$ and $0.1n - 80 \lambda n \gg 2m$, we can find a collection of red cherries $C_3$ with centers in $U_1 \cap V_2$ and leaves in $U_1 \cap V_1 - B'$ of size $m$. Let $V(C_3) \cap V_2 = \{u_1, \ldots, u_m\}$ and $V(C_3) \cap V_1 = \{v_1, \ldots, v_{2m}\}$.

Let $R \subseteq V_3$ be the collection of vertices in $V_3$ with blue degree at most $0.05n$ to $U_1$.

\textbf{Step 2:} Then by Hall's Theorem we find matching $M$ for $V(C_3) \cap U_1$ to $R$ and then find common neighbor back to connect those vertices.

\textbf{Details:} Since $\{v_2, \ldots, v_{2m}\} \cap B' = \emptyset$, each of them has red degree at least $n - a_3 -0.05n - 80 \lambda n > 0.9n$ to $R$. Thus, we can find a matching $M$ for $\{v_2, \ldots, v_{2m}\}$ such that $V(M) \cap V_3 = \{w_2, \ldots, w_{2m}\}$ and each $v_iw_i$ is a matching edge, where $2 \le i \le 2m$.

Since $V(M) \cap V_3 \subseteq R$, we can find for each pair $(w_{2i}, w_{2i+1})$ a common red neighbor $g_i \in U_1$, where $1 \le i \le m-1$.

Therefore, we obtained a path $$P_1=v_1u_1v_2w_2g_1w_3v_3u_2v_4w_4 \ldots v_{2m-1}u_mv_{2m}w_{2m}$$ of length $6m-3$.

\textbf{Step 3:} We use Theorem~\ref{berge} to get a path saturating all vertices left in $V_3 - B - V(M)$.

\textbf{Details:} 
Let $X = V_3 - B - \{w_2, \ldots, w_{2m-1} \}$ and we know $$|X| = n - a_3 - b - (2m-2)=n-3m+2.$$ Choose $Y \subseteq U_1 - \{u_1, \ldots, u_m\} - \{v_2, \ldots, v_{2m} \}-\{g_1, \ldots, g_{m-1} \}$ such that $v_1 \in Y$. By~\eqref{a1a2a3}, $$a_1 = -a_2 + 2 a_3 - 2 \ge b+1 + a_3 + a_3 - 2 = m+a_3 - 1 \ge m$$ and thus $$n+a_1 - m - (2m-1) - (m-1) \ge n-3m+2.$$ Hence we can require $|Y| = n - 3m +2$.

Now we consider $G_R[X,Y]$ and we order vertices in $X$ and $Y$ separately by their degree from smallest to largest. Since vertices in $U_1$ have red degree at least $\frac{n_3}{2}$ to $V_3$, they have red degree at least $\frac{n_3}{2} - b - (2m-1) > 0.4n$ to $X$.

By Condition~(iv), there are at most $80 \lambda n$ vertices in $U_1$ with blue degree at least $0.05n$ to $V_3$. Thus, at least $|Y| - 80 \lambda n$ vertices in $Y$ have red degree at least $|X| - 0.05n > 0.94n$ to $X$, the smallest index $k$ such that $d_R(y_k,X) \le k+1$ satisfies $d_R(y_k,X) \ge 0.94n-1$. Since vertices in $X$ have blue degree at most $0.9n$ to $U_1$, they have red degree at least $n+a_1-m-(2m-1)-(m-1)-0.9n > 0.09n$ to $Y$. The smallest index $j$ such that $d_R(x_j,Y) \le j+1$ satisfies $d_R(x_j,Y) \ge 0.09n-1$. By Theorem~\ref{berge} and $0.09n-1+0.94n-1 > n+2$, we can find a Hamiltonian red path $P_2$ from $v_1$ to $w_{2m}$ in $G_R[X,Y]$ of length $$2(n-3m+2)-1 = 2n-6m+3.$$

We glue $P_1$ and $P_2$ at $v_1$ and $w_{2m}$ to obtain a red cycle of size exactly 
\[
	6m-3+ 2n - 6m + 3 = 2n. \qedhere
\]
\end{proof}

\subsubsection{Handling many vertices in $U_1 \cap V_2$  incident to blue edges}  

In this case, there should be many disjoint blue cherries inside $U_1$, and we will find a blue cycle.

\begin{claim}\label{situation4}
If there are at least $-a_2 - b $ vertices in $V_2 \cap U_1$ of blue degree at least $|V_1 \cap U_1| - 0.1n \ge 0.4n$ to $U_1 \cap V_1$, then we find a blue cycle of length exactly $2n$.
\end{claim}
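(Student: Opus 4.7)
The plan is to mirror the construction of Claim~\ref{situation3}, with red replaced by blue and with $V_3$ and $U_2$ swapping roles. By~(v) the bipartite blue graph $G_B[V_3,U_2]$ is dense, while by~(iv) $G_B[V_3,U_1]$ is sparse, so the main body of the blue cycle must live in $V_3\cup U_2$ rather than $V_3\cup U_1$; the blue edges from $V_1\cap U_1$ to $V_3$ that do exist are concentrated on $B$, and these will now play the ``bridge'' role that abundant red edges $V_3\leftrightarrow U_1$ played before.

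First, as in Step~1 of Claim~\ref{situation3}, I will greedily pick $m'=-a_2-b$ vertex-disjoint blue cherries $v_{2i-1}u_iv_{2i}$ with each $u_i\in V_2\cap U_1$ (of blue degree $\ge |V_1\cap U_1|-0.1n$ to $V_1\cap U_1$ by hypothesis) and $v_{2i-1},v_{2i}\in V_1\cap U_1$. Next, I will extend these into a blue path $P_1$ by greedily matching each leaf $v_j$ ($2\le j\le 2m'$) to a distinct blue neighbor $w_j\in B$---using that each $w\in B$ has blue degree at least $\tfrac34|V_1\cap U_1|$ to $V_1\cap U_1$, as already derived in Step~2 of Claim~\ref{situation2}---and bridging each consecutive pair $w_{2i},w_{2i+1}$ by a common blue neighbor $g_i\in U_2$, possible because $G_B[V_3,U_2]$ has density close to $1$ by~(v). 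The resulting path
\[
P_1=v_1u_1v_2w_2g_1w_3v_3u_2v_4w_4g_2w_5\cdots v_{2m'-1}u_{m'}v_{2m'}w_{2m'}
\]
has length $6m'-3$ and endpoints $v_1\in V_1\cap U_1$ and $w_{2m'}\in V_3$. Finally, I will close the cycle by applying Theorem~\ref{berge} to a bipartite blue subgraph $G_B[X,Y]$ with $X\subseteq V_1$, $Y\subseteq V_2\cup V_3$, $|X|+|Y|=2n-6m'+4$, $v_1\in X$, $w_{2m'}\in Y$, where $X$ consists mostly of $V_1\cap U_2$ (whose blue neighborhood in $V_3$ is large by~(v)) and $Y$ of $V_3\setminus(B\cup V(P_1))$ together with enough vertices of $V_2\cap U_2$ to balance the sizes, so that the Berge degree hypothesis holds after removing a handful of low-blue-degree vertices via Lemma~\ref{lemma:good-vertices}.

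The main obstacle will be the endpoint $v_1\in V_1\cap U_1$: unlike its analog in Claim~\ref{situation3}, it has only sparse blue edges to $V_3$ (through $B$) and no guaranteed blue edges to $U_2$, so the Berge degree hypothesis may fail at $v_1$ in the naive setup. I plan to handle this by prepending an extra leg $w_1v_1$ with $w_1\in B$, thereby moving both endpoints of $P_1$ into $V_3$ and allowing the Berge step to be carried out purely inside the blue-dense graph $G_B[V_3\setminus B,U_2]$. A secondary hurdle is that this modification, combined with the leaf-matching, needs $|B|\ge 2m'$; when $|B|$ is substantially smaller than $m'$, I will have to replace some $B$-connectors by splices through $V_2\cap U_2$, relying on a sub-case analysis governed by \eqref{a1a2a3} and \eqref{a2b}. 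Verifying that one of these variants works in every regime, and that the degree conditions on $(X,Y)$ are met via Lemma~\ref{lemma:good-vertices} in each sub-case, will be the most delicate part of the proof.
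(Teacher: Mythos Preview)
Your plan has a genuine structural gap. You propose to connect the cherry leaves $v_j\in V_1\cap U_1$ through vertices $w_j\in B$, but $|B|=b\le 5\lambda n$ while you need $2m'$ such connectors, and $m'=-a_2-b$ is not bounded by $b$ (indeed $b$ can be $0$ while $m'\ge 1$). You acknowledge this and propose a fallback ``splice through $V_2\cap U_2$,'' but the only information \eqref{a1a2a3} and \eqref{a2b} give is arithmetic; they say nothing about blue edges from $V_2\cap U_2$ to $V_1\cap U_1$. The missing ingredient is the consequence of the maximality of $C$: every vertex of $U_2\setminus V(C)$ has red degree at most $2m-1$ to $U_1$, hence blue degree at least $|V_1\cap U_1|-(2m-1)$ to $V_1\cap U_1$. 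This is exactly what makes $V_2\cap U_2$ usable as connectors, and the paper builds the entire proof around it: the cherries are linked via common blue neighbours $w_i\in V_2\cap U_2\setminus V(C)$ from the outset (not through $B$), and the same fact drives Step~4, which absorbs the leftover vertices of $U_2\cap V_2$ back through $V_1\cap U_1$.

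There is a second problem with your closing Berge step. You want to run it in $G_B[V_3\setminus B,\,U_2]$ after prepending a leg so both endpoints lie in $B\subset V_3$; but then your endpoints are not in the bipartite graph you name, and even allowing them, the $V_3$-side has only $n-a_3$ vertices available while the path you need has roughly $n-3m'+2$ vertices on that side. Since $a_3$ can be of order $\lambda n$ and $m'$ can be as small as $1$, this fails whenever $a_3>m'$. The paper avoids this by running Berge on a fixed $0.9n\times 0.9n$ piece of $G_B[V_3,U_2]$ (Step~3), handling $B$ separately as a short path through $V_1\cap U_1$ (Step~2), and mopping up the remaining $0.1n+a_2-m'$ vertices of $U_2\cap V_2$ via common blue neighbours in $V_1\cap U_1$ (Step~4). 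The four pieces are then glued so that the lengths $4m'+2b+1.8n+(0.2n+2a_2-2m'-2)+2$ telescope to exactly $2n$, using $m'=-a_2-b$. Your Claim~\ref{situation3}-style template does not adapt cleanly here because the roles of $B$ and $R$ are not symmetric: $R$ is almost all of $V_3$, while $B$ is a tiny exceptional set.
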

\begin{proof}
\textbf{Step 1:} We find $m' = -a_2 - b$ blue cherries from $U_1 \cap V_2$ to $U_1 \cap V_1$. Possibly avoiding bad vertices. Then find common neighbors in $V_2 \cap U_2$ to connect those cherries.

\textbf{Details:} Since vertices in $U_2 \cap V_2 - V(C)$ have red degree at most one to $V_1 \cap U_1 - V(C)$, there are at most $|V_2 \cap U_2| \le \lambda n^2$ red edges between $V_2 \cap U_2 -V(C)$ and $V_1 \cap U_1 - V(C)$. Therefore, there are at most $20 \lambda n$ vertices in $V_1 \cap U_1 - V(C)$ with red degree at least $0.05n$ to $V_2 \cap U_2 - V(C)$ and at least $|U_1 \cap V_1| - |V(C) \cap U_1| - 20 \lambda n$ vertices in $U_1 \cap V_1 - V(C)$ with blue degree at least $|V_2 \cap U_2| - |V(C)| - 0.05n > \frac{3}{4}|V_2 \cap U_2|$ to $V_2 \cap U_2 - V(C)$, we call those vertices $B_3$. 

Since there are $m'$ vertices in $V_2 \cap U_1$ of blue degree at least $|V_1 \cap U_1| - 0.1n - |V(C)| - 20 \lambda n > 0.3n$ to $B_3$, we find $m'$ blue cherries, $C_4$, with center in $V_2 \cap U_1$ and leaves in $B_3$. Let $V(C_4) \cap V_2 = \{u_1, \ldots, u_{m'}\}$ and $V(C_4) \cap V_1 = \{v_1, \ldots, v_{2m'}\}$.

We can find for each pair $(v_{2i},v_{2i+1})$ a common blue neighbor, $w_i$, in $V_2 \cap U_2 - V(C)$, where $1 \le i \le m'-1$. We also find for $v_1$ a blue neighbor $w_0$ and $v_{2m'}$ a blue neighbor $w_{m'}$ distinct from $\{w_1, \ldots, w_{m'-1}\}$ and $V(C)$.

We obtain a blue path $$P_1=w_0v_1u_1v_2w_1\ldots v_{2m'-1}u_{m'}v_{2m'}w_{m'}$$ of length $4m'$.

\textbf{Step 2:} We find for vertices in $B$ common neighbors in $V_1 \cap U_1$, avoiding vertices already used.

\textbf{Details:} Since 
\begin{equation}\label{v2}
|V_1 \cap U_1| \ge \frac{n+a_1}{2} \ge |V_2 \cap U_1|,
\end{equation}
 each vertex in $B$ has blue degree at least $0.9n-2m' - |V_2 \cap U_1|$ to $U_1 \cap V_1 - V(C) $. Therefore, $$0.9n - 2m' - |V_2 \cap U_1| \ge 0.9n - 2m' - (n+a_1 - |V_1 \cap U_1|) = |V_1 \cap U_1| - a_1 - 2m' - 0.1n $$ $$\ge \frac{3}{4} |V_1 \cap U_1|.$$
We can find for each pair $(z_i,z_{i+1})$ a common neighbor $r_i $ where $1 \le i \le b-1$, a blue neighbor $r_0$ of $z_1$, a blue neighbor $r_{b}$ of $z_{b}$ such that $r_0, \ldots, r_{b}$ are all distinct and in $V_1 \cap U_1 - V(C)$.

We obtain a blue path $$P_2=r_0z_1r_1\ldots z_ir_i \ldots z_{b}r_{b}$$ of length $2b$.

\textbf{Step 3:} Take $0.9n$ vertices in $V_3$ and $0.9n$ vertices in $U_2$ including $V_1 \cap U_2$ and $V(C)$. Use Theorem~\ref{berge} to find a path.

\textbf{Details:} Recall that $B_2$ is the collection of vertices in $V_3$ with red degree at most $0.1n$ to $U_2$ and $|B_2| \ge n-a_3 - 40\lambda n - b$. Since $|B_2| \ge n-a_3 - 40 \lambda n - b$, we take a set $X \subseteq B_2$ such that $|X| = 0.9n$. By~\eqref{v2}, $|V_1 \cap U_2| \le 0.6n$ and we can take a set $Y \subseteq U_2 - \{w_0, w_1, \ldots, w_{m'-1}\}$ such that $V_1 \cap U_2 \subseteq Y$, $V(C) \subseteq Y$, $w_{m'} \in Y$, and $Y = 0.9n$.

First we find a blue edge $v'u'$ with $v' \in X$ and $u' \notin Y$. Now we consider $G_B[X,Y]$ and we order vertices in $X$ and $Y$ separately by their degree from smallest to largest. Since vertices in $Y$ have blue degree at least $0.9n - \frac{n_3}{2} > 0.3n$ to $X$, the smallest index $k$ such that $d_B(y_k,X) \le k+1$ satisfies $d_B(y_k,X) \ge 0.3n-1$. Since vertices in $X$ have red degree at most $0.1n$ to $U_2$, they have blue degree at least $0.9n-0.1n = 0.8n$ to $Y$. The smallest index $j$ such that $d_B(x_j,Y) \le j+1$ satisfies $d_B(x_j,Y) \ge 0.8n-1$. By Theorem~\ref{berge} and $0.8n-1+0.3n-1 > 0.9n+2$, we can find a Hamiltonian blue path $P_3'$ from $w_{m'}$ to $v'$ in $G_B[X,Y]$ of length $1.8n-1$. We then extend the path $P_3'$ to $P_3$ by adding the edge $v'u'$. Thus, the path $P_3$ has length $1.8n$.

\textbf{Step 4:} Finally, the rest of vertices in $U_2 \cap V_{2}$ have large blue degree to $V_{1} \cap U_1$, and we find common neighbors to include them.

\textbf{Details:}
Let $K:=(U_2 - Y - \{w_0,w_1, \ldots, w_{m'-1}\})= \{u', f_1, \ldots, f_{k-1}\}$. Note that $k = |K| = n+a_2-0.9n-m' = 0.1n+a_2-m'$ and $K \subseteq V_{2} \cap U_2-V(C)$. Since each vertex in $K$ has at most one red neighbor to $V_1 \cap U_1 - V(C) - \{u_1, \ldots, u_{m'}\}- \{v_1, \ldots, v_{2m'}\}- \{r_0, \ldots, r_b\}$, we find for $(u',f_1)$ a distinct blue common neighbor $h_0$, each pair $(f_i,f_{i+1})$ a distinct blue common neighbor, $h_i$, in $V_1 \cap U_1 - V(C) - \{u_1, \ldots, u_{m'}\}-\{v_1, \ldots, v_{2m'}\} - \{r_0, \ldots, r_b\}$ where $ 1 \le i \le k-2$. We may assume that $r_0f_{k-1}$ is blue (since $f_{k-1}$ has at most one red neighbor to $V_1 \cap U_1$ and $z_1$ has very large blue degree to $V_1 \cap U_1$, if $r_0f_{k-1}$ was not blue then we choose $r_0$ such that $r_0f_{k-1}$ is blue).

We obtain a blue path $$P_4 = u'h_0f_1\ldots f_ih_if_{i+1} \ldots h_{k-2}f_{k-1}$$ of size $2k-2 = 0.2n + 2a_2 - 2m'-2$.

Finally, we add the edge $r_bw_0$ to connect $P_2$ and $P_1$, glue $P_1$ and $P_3$ at $w_{m'}$, glue $P_3$ and $P_4$ at $u'$, and add the edge $r_0f_{k-1}$ to complete the cycle of length 
$$
	1+4m'+2b+1.8n+0.2n+2a_2-2m'+1 = 2n.
$$
\end{proof}

\subsubsection{Changes of the proof when $j \ne 3$}\label{j-not-3}

When $j\ne 3$, essentially the same proof works, with minor modifications.

Without loss of generality, we assume $j = 1$. We use the same setup as in the case when $j=3$ but replace every place of $V_3$ by $V_1$ and $n_3$ by $n_1$.

\textbf{Case 1:} $ n_1 \ge n+b.$

Since $n_1 \ge n+b \text{ and } |U_1| \ge n,$ we take a set of vertices $X \subseteq V_1 - B$ of size $n$ and a set of vertices $Y \subseteq U_1$ of size $n$. 

Now we consider $G_R[X,Y]$ and we order vertices in $X$ and $Y$ separately by their degree from smallest to largest. Since vertices in $Y$ have red degree at least $0.5n_1$ to $X$ and there are at most $80 \lambda n$ vertices with blue degree at least $0.05n$ to $V_1$, the smallest index $k$ such that $d_R(y_k,X) \le k+1$ is at least $0.95n-1$. Since vertices in $X$ have blue degree at most $0.9n$ to $U_1$, they have red degree at least $0.1n$ to $Y$. The smallest index $j$ such that $d_R(x_j,Y) \le j+1$ is at least $0.1n-1$. By Theorem~\ref{lasvergnas} and $0.1n-1+0.95n-1 > n+1$, there is a Hamiltonian cycle in $G_R[X,Y]$ of length $2n$.

\textbf{Case 2:} $n+1 \le n_1 \le n+b - 1.$

We still assume $n_1 = n - a_3$ with $a_3 < 0$. It is included in Case 1 by replacing $n_3$ with $n_1$, $V_3$ with $V_1$, $V_1$ with $V_2$, and $V_2$ with $V_3$. Note that in this case we have $$n+a_1 + n + a_2 = 2n - 1 $$ and thus $$a_1+a_2 = -1.$$ Equation~\eqref{u2v1} changes to 
$$|U_2 \cap V_3|= n_3 - |U_1 \cap V_3| \ge 2n-1-n+a_3-\frac{n+a_1}{2}=\frac{n}{2}-1+a_3-\frac{a_1}{2}$$ and thus $$|U_2 \cap V_2| \le n+a_2 - (\frac{n}{2}-1+a_3-\frac{a_1}{2})= \frac{n}{2}+1+a_2-a_3+\frac{a_1}{2}=\frac{n}{2}-a_3-\frac{a_1}{2}.$$
When choosing between Claim~\ref{situation3} and Claim~\ref{situation4}, we still have
\[
	|U_1|-|V_2| \ge n+a_1-n+a_3 = a_1 + a_3 = -1-a_2+a_3 = (b + a_3) + (-b - a_2) - 1
\]
and therefore one of the two claims can still be applied. 

\subsection{The case when (\ref{N3n-1}) holds}

In this case, we have
\begin{equation}\label{3n-1}
n_1+n_2+\ldots+n_s = 3n-1
\end{equation}
and
\begin{equation}\label{>=2n-1}
n_2+\ldots+n_s \ge 2n-1.
\end{equation}

By (\ref{Se4}),  $s \le 5$.

\begin{lemma}\label{3n-1holds}
Let $G = K_{n_1, n_2, \dots, n_s}$ satisfying \eqref{3n-1} and \eqref{>=2n-1} be $2$-edge-colored with a $(\lambda, i, 2)$-bad partition. Then $G$ has a monochromatic cycle of length $2n$.
\end{lemma}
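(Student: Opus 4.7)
The plan is to adapt the strategy of Lemma~\ref{2n-1holds}, which simplifies considerably here because every $n_i \le n$, so all three sets $V_j, U_1, U_2$ of the bad partition have size within $O(\lambda n)$ of $n$. First, WLOG assume $i=1$ (swapping colors if needed), so color~$1$ (red) edges are concentrated between $V_j$ and $U_2$ and color~$2$ (blue) edges are concentrated between $V_j$ and $U_1$. Clean up the partition by moving each vertex of $U_1$ with red degree at least $n_j/2$ to $V_j$ into $U_2$, and symmetrically for $U_2$; an averaging argument based on conditions (i)--(ii) of the bad partition shows only $O(\lambda n)$ vertices are moved, and the bad partition conditions survive with a mildly worse constant. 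After cleanup, every vertex of $U_1$ has blue majority to $V_j$ and every vertex of $U_2$ has red majority to $V_j$.

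Second, write $|V_j|=n+\alpha$, $|U_1|=n+\beta$, $|U_2|=n+\gamma$ with $\alpha+\beta+\gamma=-1$ and $|\alpha|,|\beta|,|\gamma|=O(\lambda n)$, using $N=3n-1$ together with condition (iii)--(v) of the bad partition. By swapping the roles of red and blue if necessary (which swaps $U_1$ and $U_2$), assume $\gamma\ge\beta$, so $\gamma\ge-\tfrac12-\tfrac{\alpha}{2}$; we target a red cycle $C_{2n}$ living mostly in $G_1[V_j,U_2]$.

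In the easy subcase $|V_j|\ge n$ and $|U_2|\ge n$, select $n$-vertex subsets $X\subseteq V_j$ and $Y\subseteq U_2$ by discarding the $\sqrt{\lambda}\,n$ vertices of lowest red degree to the opposite side. By Lemma~\ref{lemma:good-vertices}, every remaining vertex has red degree at least $(1-O(\sqrt{\lambda}))n$ across $(X,Y)$, so Theorem~\ref{chvatal} applied to $G_1[X,Y]$ delivers the required red Hamiltonian cycle. In the harder subcase one of $|V_j|<n$ or $|U_2|<n$ holds (but not both, since $\gamma\ge\beta$ forces $|U_2|\ge|U_1|$). Say $|V_j|=n-k$ with $1\le k=O(\lambda n)$: we seek $k$ vertices in $U_1$ with red degree at least $0.1\,n$ to $U_2$ and attach each into the cycle via a short red detour through a pair of $U_2$-vertices, using Theorem~\ref{lasvergnas} to enforce these $k$ paths within a near-complete red bipartite graph on $(V_j\cup\{k\text{ surrogates}\},U_2')$ with $|U_2'|=n$. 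If no such collection of $k$ surrogates exists, then at most $O(\lambda n)$ vertices of $U_1$ carry any red edge to $U_2$, so $G_2[U_1,U_2]$ is nearly complete blue bipartite; since $|U_1|+|U_2|\ge 2n$, applying Theorem~\ref{chvatal} to an $n+n$ blue bipartite subgraph of $G_2[U_1,U_2]$ yields a blue $C_{2n}$. The symmetric treatment handles $|U_2|<n$ via red cherries centered in $V_j$.

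The main obstacle will be the bookkeeping in the harder subcase: one must verify that the at most $O(\lambda n)+1$ surrogate vertices can be inserted so that the hypotheses of Theorem~\ref{lasvergnas} hold on the augmented bipartite graph, and one must carefully track parity and size when the surrogate collection cannot be found on one side (forcing a color switch). These issues parallel Claims~\ref{situation1}--\ref{situation4}, but are lighter here because $|V_j|,|U_1|,|U_2|$ each lie in $[(1-O(\lambda))n,(1+O(\lambda))n]$, so no part ever has to absorb more than $O(\lambda n)$ vertices from outside its ``natural'' side.
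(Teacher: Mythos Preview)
Your overall architecture (clean up the partition, try for a red Hamiltonian cycle in $G_1[V_j,U_2]$, and use ``surrogate'' cherries when $|V_j|<n$) mirrors the paper's approach. However, the fallback step when surrogates fail has a genuine gap.

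You write: ``If no such collection of $k$ surrogates exists, then at most $O(\lambda n)$ vertices of $U_1$ carry any red edge to $U_2$, so $G_2[U_1,U_2]$ is nearly complete blue bipartite; since $|U_1|+|U_2|\ge 2n$, applying Theorem~\ref{chvatal} \ldots\ yields a blue $C_{2n}$.'' This is where the argument breaks. The sets $U_1$ and $U_2$ are \emph{not} parts of the original multipartite graph: they partition $V(G)\setminus V_j$, which is a union of several $V_k$'s. A vertex $u\in U_1\cap V_k$ has no edge in $G$ at all to any vertex of $U_2\cap V_k$, so $G[U_1,U_2]$ can be very far from complete bipartite (for instance, if each of $V_2,\dots,V_s$ is split roughly in half between $U_1$ and $U_2$, every vertex of $U_1$ has $G$-degree only about $|U_2|/2$ into $U_2$). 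Hence ``few red edges from $U_1$ to $U_2$'' does not give you a dense blue bipartite graph, and Theorem~\ref{chvatal} cannot be applied as stated. Your ``symmetric treatment \ldots\ via red cherries centred in $V_j$'' for the case $|U_2|<n$ suffers from the same confusion about where the non-edges of $G$ live.

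The paper avoids this by exploiting the multipartite structure \emph{inside} $U_1$. It lets $V_k$ be the part maximizing $|V_k\cap U_1|$ and looks at edges of $G$ between $V_k\cap U_1$ and $U_1\setminus V_k$; these are genuine edges of $G$ since they go between different parts. One then shows $|U_1\setminus V_k|\ge (b+a_3)+(-a_2-b)-1$ and splits according to whether at least $b+a_3$ of these vertices have large red degree to $V_k\cap U_1$ (red cherries internal to $U_1$, giving a red $C_{2n}$ via the analogue of Claim~\ref{situation3}) or at least $-a_2-b$ have large blue degree there (blue cherries internal to $U_1$, giving a blue $C_{2n}$ via the analogue of Claim~\ref{situation4}). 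This intra-$U_1$ dichotomy is the missing idea in your plan; without it you cannot handle the case where $U_1$ and $U_2$ each intersect the same part $V_k$ substantially.

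A smaller point: in your easy subcase you ``discard the $\sqrt{\lambda}\,n$ vertices of lowest red degree'' from each of $V_j,U_2$ before picking $n$-subsets, but when $|V_j|=n$ or $|U_2|=n$ exactly you cannot discard anything, and a vertex of $V_j$ may have arbitrarily low red degree to $U_2$ (the bad-partition condition bounds the \emph{total} number of wrong-colour edges, not each vertex). The paper handles this via the exceptional set $B\subseteq V_j$ of size $b\le 5\lambda n$ and then tracks the defect $m=a_3+b$ throughout, rather than assuming it is zero.
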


\begin{proof}
Without loss of generality, let $i=2$. 
 By the definition of a $2$-bad partition, there is a $j \in [s]$ such that

\begin{enumerate}
\item[(i)] $n \ge |V_j| \ge (1-\lambda)n$.
\item[(ii)] $(1-\lambda)n \le |U_1| \le (1+\lambda)n$.
\item[(iii)] $(1-\lambda)n \le |U_2| \le (1+\lambda)n$.
\item[(iv)] $E(G_2[V_j,U_1]) \le \lambda n^2$.
\item[(v)] $E(G_1[V_j, U_2]) \le \lambda n^2$.
\end{enumerate}

We begin by assuming that $s=4$. Later, in Subsection~\ref{s-not-4}, we describe the modifications to the proof when $s\ne 4$. We also assume that $j = 1$, but avoid using the fact that $n_1 \ge n_2, n_3, n_4$, so this is done without loss of generality. 

In the case when (\ref{N3n-1}) holds we have $n_i \le n$ for all $i$; we also know that $n_2 \ge n_3 \ge n_4$, $n_1 = |V_j| \ge (1-\lambda)n$, and 
\[
	|U_1|+|U_2|=n_2+n_3+n_4 = 3n-1-n_1 \le 2n+\lambda n - 1,
\]
so $n_2 \ge \frac{n_2+n_3+n_4}{3} \ge \frac{2n}{3}$.

We move vertices as we did in the previous section so that for any $u \in U_1$, $d_R(u,V_1) \ge \frac{n_1}{2}$ and for any $v \in U_2$, $d_B(v,V_1) \ge \frac{n_1}{2}$. Note that (iv) and (v) change to (iv) $|E(G_2[V_1,U_1])| \le 4 \lambda n^2$ and (v) $|E(G_1[V_1,U_2])| \le 4 \lambda n^2$.
 
Let $|U_1| = n + a_1$, $|U_2| = n + a_2$, and $|V_1| = n - a_3$. Let $B$ be the set of vertices in $V_1$ with blue degree at least $0.9n$ to $U_1$, and let $b := |B|$. By Condition~(iv), we know $b \le 5 \lambda n$.

Let $C$ be a maximum collection of vertex-disjoint red cherries with center in $U_2$ and leaves in $U_1$. If there are at least $m := a_3 + b$ cherries in $C$, then we use them, together with the edges between $U_1$ and $V_1$, to find a red cycle of length $2n$. This is done in exactly the same way as in Claim~\ref{situation1}, except with $V_1$ playing the role of $V_3$.

Otherwise, we assume that $c := |C| \le m-1$, which means every vertex in $U_2 - V(C)$ has red degree at most $2m-1$ to $U_1$. 

When $|U_2| = n + a_2 \ge n - b$, we can find a blue cycle in almost the same way as in Claim~\ref{situation2}; the updated proof is given in Claim~\ref{situation2-2}.
 
Otherwise, we may assume that $|U_2| \le n-b-1$, in which case \eqref{a2b} holds.

As before, to proceed, we want to use edges within $U_1$. Let $k$ be such that $|V_k \cap U_1|$ is maximized. This intersection is still at most $|V_k| \le n$, while $|U_1| = n+a_1$, so $|U_1 - V_k| \ge a_1$.

Since $(n+a_1) + (n+a_2) = |U_1| + |U_2| = 3n-1 - |V_1| = 2n+a_3 - 1$, we have $a_1 + a_2 = a_3 - 1$, and therefore
\[
	|U_1 - V_k| \ge a_3 - a_2 - 1 = (b + a_3) + (-a_2 - b) - 1.
\]
There are two possibilities.
\begin{itemize}
\item There are at least $m = b + a_3$ vertices in $U_1 - V_k$ of red degree at least $0.1n$ to $U_1 \cap V_k$. In this case, we will find a red cycle of length exactly $2n$ by Claim~\ref{situation3-2}.

\item There are at least $m' = -a_2 - b$ vertices in $U_1 - V_k$ of blue degree at least $|U_1 \cap V_k| - 0.1n \ge 0.2n$ to $U_1 \cap V_k$. In this case, we find a blue cycle of length exactly $2n$ by Claim~\ref{situation4-2}.
\end{itemize}
One of these must hold, since $|U_1 - V_k| \ge m + m' - 1$, while by \eqref{a2b}, $m' \ge 1$; therefore there are either $m$ vertices for Claim~\ref{situation3-2} or $m'$ vertices form Claim~\ref{situation4-2}. In either case, we obtain a monochromatic cycle of length exactly $2n$, completing the proof.
\end{proof}

\subsubsection{The case of large $U_2$: $|U_2| \ge n - b$}\label{s2} 

\begin{claim}\label{situation2-2}
If $|U_2| = n+ a_2 \ge n - b$, then we have a blue cycle of size exactly $2n$.
\end{claim}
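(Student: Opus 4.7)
The plan is to mimic the four-step structure of the proof of Claim~\ref{situation2}. The key change: in Claim~\ref{situation2} the three parts were $V_1, V_2, V_3$ with $V_3$ the bad part ($j=3$) and with $V_1$ being the one of $V_1, V_2$ satisfying $|V_1 \cap U_1| \ge |V_2 \cap U_1|$. In the present $s=4$ setting, the bad part $V_1$ (here $j=1$) plays the role that the old $V_3$ did, and we let $V_k$ with $k \in \{2,3,4\}$ maximizing $|V_k \cap U_1|$ play the role of the old $V_1$; the remaining parts $V_\ell$ ($\ell \ne 1, k$) jointly play the role of the old $V_2$.

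With this correspondence, Step~1 applies Theorem~\ref{berge} to a bipartite graph on $X \cup Y$ of size $1.6n$, where $X$ is an $0.8n$-subset of $V_1$ drawn from the large set $B_2$ of vertices with red degree at most $0.1n$ to $U_2$, and $Y$ is an $0.8n$-subset of $U_2$ containing $V(C)$ and all of $V_k \cap U_2$; this produces a long blue path $P_1$, which we extend by one edge to a vertex $y' \in U_2 \setminus Y$. Step~2 (only if $a_2 < 0$) weaves in $-a_2$ vertices of $B$ via distinct common blue neighbors in $V_k \cap U_1$. Step~3 handles the $\approx 0.2n + a_2$ remaining vertices of $U_2 \setminus Y$; these now all lie outside $V_k$, and the maximality of $C$ forces each of them to have red degree at most $2m-1$ to $U_1$, so consecutive pairs have many common blue neighbors in $V_k \cap U_1$. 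Step~4 concatenates everything, using the edge at $y'$ and the shared endpoint $y$, into a blue cycle of length exactly $2n$, identically to the old proof.

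The only nontrivial verification is the inclusion $V_k \cap U_2 \subseteq Y$ in Step~1, i.e., that $|V_k \cap U_2| + |V(C)| \le 0.8n$. In Claim~\ref{situation2} the analogous bound was \eqref{u2v1} and required $n_1 = n_2$; here we use pigeonhole on $|U_1| \ge (1-\lambda)n$ divided among the three non-bad parts to get $|V_k \cap U_1| > 0.3n$, whence $|V_k \cap U_2| \le |V_k| - |V_k \cap U_1| \le n - 0.3n$, comfortably less than $0.8n - |V(C)|$ since $|V(C)| \le 3m = O(\lambda n)$. All the minimum-degree and common-neighbor estimates then carry over once $|V_k \cap U_1|$ is linear in $n$; the remaining risk is essentially bookkeeping in tracking which vertices in $V_k \cap U_1$ have been used across Steps~2 and~3, which is handled by the greedy selection as in Claim~\ref{situation2}.
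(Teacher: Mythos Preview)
Your proposal is correct and follows the same four-step architecture as the paper's proof. The one genuine difference is which non-bad part you single out: you choose $V_k$ maximizing $|V_k\cap U_1|$ and place $V_k\cap U_2$ into $Y$, whereas the paper chooses $V_p$ maximizing $|V_p\cap U_2|$ and places the \emph{complement} $(V-V_p)\cap U_2$ into $Y$. These are dual bookkeeping choices: the paper's bound $|(V-V_p)\cap U_2|\le (n+a_2)-0.33n$ comes from pigeonhole in $U_2$, while your bound $|V_k\cap U_2|\le n_k-|V_k\cap U_1|\le n-0.3n$ comes from pigeonhole in $U_1$ together with $n_k\le n$. Either way one obtains a set of size at most about $0.7n$ to place in $Y$, a single designated part in which the Step~3 remainder lives (the paper's $V_p$, your complement of $V_k$), and a linear-in-$n$ pool in $U_1$ outside that part for the common-neighbor steps. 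Your choice even aligns with the index $k$ the paper itself introduces immediately after Claim~\ref{situation2-2} for Claims~\ref{situation3-2}--\ref{situation4-2}, so arguably it is the more uniform labeling; the paper's choice of $p$ has the mild advantage that the Step~3 vertices lie in a \emph{single} part $V_p$ rather than possibly two, but as you note this is harmless since the common neighbors are drawn from $V_k$.
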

\begin{proof}
Since $|U_2| = n+a_2 \ge n-4\lambda n$, we know that the largest among $V_2 \cap U_2$, $V_3 \cap U_2$, $V_4 \cap U_2$ has size at least $0.33n$. We assume $|V_p \cap U_2|$ is the largest and 
\begin{equation}\label{vi-2}
|V_p \cap U_2| \ge 0.33n.
\end{equation}
By~\eqref{vi-2} and $|V_p| \le n$, we have $$|V_p \cap U_1| \le 0.67n$$ and there is a $q \in \{2,3,4\}-\{p\}$ such that 
\begin{equation}\label{vj-2}
|V_q \cap U_1| \ge 0.16n.
\end{equation}

\textbf{Step 1:} We first include say $0.8n$ vertices in $V_1$ and $0.8n$ vertices in $U_2$ (all of $(V-V_p) \cap U_2$ and $V(C)$) by Theorem 1.

\textbf{Details:} The details are almost the same with the Step 2 in Claim~\ref{situation2} except every place of $n_3$ is replaced by $n_1$, every place of $V_3$ is replaced by $V_1$, $V_1$ is replaced by $(V-V_p)$.

$\bullet$ If $a_2 \ge 0$, then we do not need step 2 and go to step 3 directly.

\textbf{Step 2:} Use $|a_2|$ vertices in $B$ to obtain a blue path.

\textbf{Details:} Since $b \ge |a_2|$, let $Z:=\{z_1, \ldots, z_{|a_2|}\} \subseteq B$. 

By~\eqref{vj-2} and each vertex $v$ in $B$ has blue degree at least $0.9n \gg \frac{1}{2}|U_1|$ to $U_1$, we can find for each pair $(z_i,z_{i+1})$ a blue common neighbor $r_i \in U_1 - V(C)$ where $1 \le i \le |a_2|-1$, a blue neighbor $r_0$ of $z_1$ such that $r_0 \in V_q\cap U_1 - V(C)$, a blue neighbor $r_{|a_2|}$ of $z_{|a_2|}$ such that $r_{|a_2|} \in V_q\cap U_1 - V(C)$ and $r_0, \ldots, r_{|a_2|}$ are all distinct.

Since $y'$ has at most one red neighbor to $U_1-V(C)$, we choose $r_{|a_2|}$ to be in $V_q \cap U_1 - V(C)$ and such that $r_{|a_2|}y'$ is blue. 

We obtain a blue path $$P_2=r_0z_1r_1\ldots z_ir_i \ldots z_{|a_2|}r_{|a_2|}$$ of length $2|a_2|$.

\textbf{Step 3:}  Include the rest of vertices in $U_2$ to $U_1$ by Theorem~\ref{berge}.

\textbf{Details:} The details are almost the same with the Step 2 in Claim~\ref{situation2} except every place of $V_2$ is replaced by $V_p$.
\end{proof}

\subsubsection{Handling many vertices in $U_1 - V_k$  incident to red edges}

\begin{claim}\label{situation3-2}
If there are at least $m=b+a_3$ vertices in $(V-V_k) \cap U_1$ of red degree at least $0.1n$ to $V_k \cap U_1$, then we have a red cycle of length exactly $2n$.
\end{claim}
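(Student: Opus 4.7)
The plan is to mirror the proof of Claim~\ref{situation3} almost verbatim, with $V_1$ playing the former role of $V_3$, the part $V_k\cap U_1$ of $U_1$ playing the role of the old $V_1\cap U_1$, and $(V-V_k)\cap U_1$ playing the role of the old $V_2\cap U_1$. The construction breaks into the same three stages: build $m=b+a_3$ vertex-disjoint red cherries inside $U_1$; use a matching into $V_1$ plus common red neighbours in $U_1$ to extend these cherries into a single red path $P_1$; and then close the cycle with a red Hamiltonian path in a carefully chosen bipartite subgraph between (most of) $V_1$ and (most of) $U_1$.

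For Step~1, let $B'\subseteq U_1$ be the set of vertices with blue degree at least $0.05n$ to $V_1$; by Condition~(iv), $|B'|\le 80\lambda n$. Because $0.1n-|B'|\gg 2m$ and there are at least $m$ vertices of $(V-V_k)\cap U_1$ with red degree $\ge 0.1n$ to $V_k\cap U_1$, we can greedily extract $m$ vertex-disjoint red cherries $C_3$ with centres $u_1,\ldots,u_m\in (V-V_k)\cap U_1$ and leaves $v_1,\ldots,v_{2m}\in (V_k\cap U_1)\setminus B'$. For Step~2, let $R:=V_1\setminus B$; then every leaf $v_i\notin B'$ has red degree at least $n-a_3-0.05n-|B|\ge 0.9n$ to $R$, so by Hall's theorem (or just greedily) we find a red matching $M=\{v_iw_i:2\le i\le 2m\}$ with $w_i\in R$. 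Since each $w_i\in R$ has red degree at least $|U_1|-0.05n$ to $U_1$, we can find for every pair $(w_{2i},w_{2i+1})$ a distinct common red neighbour $g_i\in U_1\setminus V(C_3)$, producing the path
\[
P_1=v_1u_1v_2w_2g_1w_3v_3u_2v_4w_4\cdots v_{2m-1}u_mv_{2m}w_{2m}
\]
of length $6m-3$.

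For Step~3, set $X=V_1\setminus B\setminus\{w_2,\ldots,w_{2m-1}\}$, so $|X|=n-a_3-b-(2m-2)=n-3m+2$, and choose $Y\subseteq U_1\setminus\{u_1,\ldots,u_m,v_2,\ldots,v_{2m},g_1,\ldots,g_{m-1}\}$ with $v_1\in Y$ and $|Y|=n-3m+2$. Such a $Y$ exists because the arithmetic identity $a_1+a_2=a_3-1$ (from $|U_1|+|U_2|=3n-1-|V_1|$) together with the subcase hypothesis $a_2\le -b-1$ yields $a_1\ge a_3+b=m$, so $|U_1|-4m+2\ge n-3m+2$. In $G_1[X,Y]$ every vertex of $X\setminus B$ has red degree at least $|Y|-0.9n$ to $Y$ (since its blue degree to $U_1$ is at most $0.9n$), giving a Berge-type lower bound of roughly $0.09n$; every vertex of $Y$ that is not in $B'$ has red degree at least $|X|-0.05n\ge 0.94n$ to $X$, and $|B'|\le 80\lambda n$. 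Hence Theorem~\ref{berge} applies and yields a Hamiltonian $(v_1,w_{2m})$-path $P_2$ in $G_1[X,Y]$ of length $2n-6m+3$. Gluing $P_1$ and $P_2$ at $v_1$ and $w_{2m}$ produces a red cycle of length exactly $(6m-3)+(2n-6m+3)=2n$.

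The only place where the present setting differs substantively from Claim~\ref{situation3} is the arithmetic controlling the size of $|U_1|$ relative to $m$; this is the step I expect to be the main obstacle, since previously we had $a_1+a_2=2a_3-2$ and now we have $a_1+a_2=a_3-1$. Fortunately the hypothesis $a_2\le -b-1$ of this subcase still gives $a_1\ge m$, which is exactly what is needed to accommodate $P_1\cup P_2$ inside $U_1$; all remaining degree estimates carry over unchanged because Condition~(iv) of the bad partition, the bound $|B|\le 5\lambda n$, and the degree-moving step that guarantees $d_R(u,V_1)\ge\tfrac{n_1}{2}$ for all $u\in U_1$ are identical to those used in Claim~\ref{situation3}.
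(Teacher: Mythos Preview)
Your approach is exactly the paper's: reproduce Claim~\ref{situation3} with $V_1$ in place of $V_3$, $V_k\cap U_1$ in place of the old $V_1\cap U_1$, and $(V-V_k)\cap U_1$ in place of $V_2\cap U_1$, and use the updated identity $a_1+a_2=a_3-1$ to get $a_1\ge m$. All three steps and the final gluing match the paper verbatim.

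There is, however, one genuine slip in Step~2. You set $R:=V_1\setminus B$ and then assert that ``each $w_i\in R$ has red degree at least $|U_1|-0.05n$ to $U_1$''. That does not follow: membership in $V_1\setminus B$ only says the blue degree to $U_1$ is below $0.9n$, so you can only conclude red degree $>|U_1|-0.9n\approx 0.1n$. With that bound you \emph{cannot} guarantee a common red neighbour for the pair $(w_{2i},w_{2i+1})$, since $0.1n+0.1n-|U_1|<0$. The paper avoids this by defining $R$ to be the set of vertices in $V_1$ with blue degree at most $0.05n$ to $U_1$; Condition~(iv) still gives $|R|\ge n-a_3-80\lambda n$, which is ample for the matching $M$, and now each $w_i$ genuinely has red degree $\ge |U_1|-0.05n$, so common neighbours $g_i$ exist. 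With that one-line correction to the definition of $R$, your argument goes through and coincides with the paper's.
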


\begin{proof}
 Let $B'$ be the collection of vertices in $U_1$ with blue degree at least $0.05n$ to $V_1$. Since there are at most $\lambda n^2$ blue edges between $U_1$ and $V_{1}$, we have $$|B'| \le 20\lambda n.$$

\textbf{Step 1:} We first find a collection of red cherries $C_3$ with center in $U_1 \cap (V-V_k)$ and leaves in $U_1 \cap V_k - B'$ of size $m$.

\textbf{Details:} The details are almost the same except we replace every place of $V_2$ by $V-V_k$, $V_1$ by $V_k$ and $V_3$ by $V_1$.

\textbf{Step 2:} By Hall's Theorem we find matching $M$ for $V(C_3) \cap U_1$ to $R$ and then find common neighbor back to connect those vertices.

\textbf{Details:} The details are almost the same except we replace every place of $V_3$ by $V_1$ and $n_3$ by $n_1$.

\textbf{Step 3:} Use Theorem~\ref{berge} to get a path saturating all vertices left in $V_1 - B - V(M)$.

\textbf{Details:} Let $X = V_1 - B - \{w_2, \ldots, w_{2m-1} \}$ and we know $|X| = n - a_3 - b - (2m-2)=n-3m+2$. We have $a_1 = a_3 - a_2 - 1 = m - a_2 - b - 1 \ge m$, and therefore
\[
	n+a_1-m-(2m-1)-(m-1)=n+a_1-4m+2 \ge n-3m+2.
\]
We can take $Y \subseteq U_1 - \{u_1, \ldots, u_m\} - \{v_2, \ldots, v_{2m} \}-\{g_1, \ldots, g_{m-1} \}$ such that $v_1 \in Y$ and $|Y| = n - 3m + 2$.

The rest of details are almost the same except we replace every place of $V_3$ by $V_1$ and $n_3$ by $n_1$.

\end{proof}   

\subsubsection{Handling many vertices in $U_1 - V_k$  incident to blue edges}\label{s4}  

In the case when many vertices in $U_1 - V_k$ are incident to blue edges, there should be many disjoint blue cherries inside $U_1$, and we find a blue cycle.

\begin{claim}\label{situation4-2}
If there are at least $m'=-a_2-b$ vertices in $U_1 - V_k$ of red degree at least $0.1n$ to $U_1 \cap V_k$, then we have a blue cycle of length exactly $2n$.
\end{claim}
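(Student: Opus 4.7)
The plan is to adapt the proof of Claim~\ref{situation4} essentially verbatim, with $V_k$ replacing the role of $V_1\cap U_1$ (the large intersection inside $U_1$), a chosen part $V_p$ with $p\in\{2,3,4\}\setminus\{k\}$ replacing $V_2$, and $V_1$ (the special $V_j$ of the bad partition) replacing $V_3$. Since $|U_2|\ge(1-\lambda)n$ and $|V_k\cap U_2|\le n$, averaging guarantees some $p\ne k$ with $|V_p\cap U_2|$ linear in $n$; this is the part whose vertices in $U_2$ will serve as the interior connectors of the blue path. A direct calculation, using that each $u\in V_p\cap U_2\setminus V(C)$ has $d_R(u,U_1)\le 2m-1$, shows that every such $u$ has blue degree at least $|V_k\cap U_1|-2m+1$ to $V_k\cap U_1$, so the common-neighbor lookups in $V_p\cap U_2\setminus V(C)$ will not run into trouble.

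Then I would run four stages exactly as in Claim~\ref{situation4}. First, build $m'=-a_2-b$ vertex-disjoint blue cherries with centers $u_1,\dots,u_{m'}$ chosen from the hypothesised $m'$ vertices of $U_1\setminus V_k$ and leaves $v_1,\dots,v_{2m'}$ in $V_k\cap U_1\setminus V(C)$, after discarding the $O(\sqrt{\lambda}n)$ vertices of $V_k\cap U_1$ with excessive red degree to $V_p\cap U_2\setminus V(C)$. Joining consecutive leaves via blue common neighbors $w_i\in V_p\cap U_2\setminus V(C)$ and picking distinct blue neighbors $w_0,w_{m'}$ for $v_1,v_{2m'}$ yields a blue path $P_1$ of length $4m'$. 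Second, use that each $z\in B$ has blue degree $\ge 0.9n$ to $U_1$ to greedily build $P_2=r_0z_1r_1\cdots z_br_b$ of length $2b$ with $r_j\in V_k\cap U_1\setminus V(C)$. Third, apply Theorem~\ref{berge} to $G_B[X,Y]$, where $X\subseteq V_1$ of size $0.9n$ consists of vertices with red degree at most $0.1n$ to $U_2$ (abundant by Condition~(v)) and $Y\subseteq U_2\setminus\{w_0,\dots,w_{m'-1}\}$ of size $0.9n$ contains $V(C)\cup\{w_{m'}\}\cup((V\setminus V_p)\cap U_2)$; extending the resulting Hamiltonian blue $(w_{m'},v')$-path by a blue edge $v'u'$ with $u'\in V_p\cap U_2\setminus Y$ gives a path $P_3$ of length $1.8n$. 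Finally, process the residual set $K=(V_p\cap U_2)\setminus Y\setminus\{w_0,\dots,w_{m'-1}\}=\{u',f_1,\dots,f_{|K|-1}\}$ of size $0.1n+a_2-m'$ into a blue path $P_4$ of length $2|K|-2$ via blue common neighbors in $V_k\cap U_1$, using that each $f_j$ has at most $2m-1$ red neighbors there. Choosing $r_0$ so that $r_0f_{|K|-1}$ is blue, the edges $r_bw_0$ and $r_0f_{|K|-1}$ together with the gluings at $w_{m'}$ and $u'$ close the four pieces into a blue cycle of length
\begin{equation*}
2b+1+4m'+1.8n+0.2n+2a_2-2m'-2+1=2n+2(b+m'+a_2)=2n,
\end{equation*}
since $m'=-a_2-b$.

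The main obstacle I anticipate is the degenerate case, absent from the tripartite setting of Claim~\ref{situation4}, in which no part $V_p$ with $p\ne k$ has a linear-sized intersection with $U_2$; in that situation $V_k$ absorbs most of both $U_1$ and $U_2$, and the interior connectors must either be distributed across two parts of $V\setminus V_k$ or obtained by a bipartite Berge/Chv\'atal construction analogous to the fallback at the end of Claim~\ref{situation2-2}. Once this split is handled, what remains is bookkeeping: keeping disjoint the $O(\sqrt{\lambda}n)$-sized sets $B$, $V(C)$, the discarded ``bad'' vertices of $V_k\cap U_1$, and the chosen connectors $\{w_i\},\{r_j\},\{h_j\}$ against the linear error budget provided by the lower bounds on $|V_k\cap U_1|$ and $|V_p\cap U_2|$.
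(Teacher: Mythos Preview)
Your proposal is essentially the paper's proof: the paper's $V_\ell$ is exactly your $V_p$, the four steps are the same, and the length bookkeeping is identical. The only real difference is that the paper lets the interior connectors $w_1,\dots,w_{m'-1}$ range over all of $(V\setminus V_k)\cap U_2$ rather than restricting to $V_p\cap U_2$, but since $m'=O(\lambda n)$ and $|V_p\cap U_2|\ge 0.16n$ your restriction is harmless.

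The ``degenerate case'' you worry about cannot occur, and this is the one place where your plan is overcautious. Since we are in the setting of~\eqref{N3n-1}, every part satisfies $n_i\le n$; in particular $|V_k|\le n$. By the choice of $k$ maximizing $|V_k\cap U_1|$, we have $|V_k\cap U_1|\ge |U_1|/(s-1)\ge 0.33n$ (or $\ge 0.24n$ when $s=5$, where~\eqref{n2n3n4n5} gives $n_k\le n/2+\lambda n$ anyway). Hence $|V_k\cap U_2|\le n_k-|V_k\cap U_1|\le 0.67n$, so $|U_2\setminus V_k|\ge 0.32n$, and averaging over the remaining parts yields some $\ell\ne k$ with $|V_\ell\cap U_2|\ge 0.16n$. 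This is exactly the paper's~\eqref{vku2} and~\eqref{vell}. So no fallback argument is needed; once you observe this bound, your four-step construction goes through verbatim.
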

\begin{proof} 
Since $V_k \cap U_1$ is the largest among $V_2 \cap U_1$, $V_3 \cap U_1$, and $V_4 \cap U_1$, we know 
\begin{equation}\label{vku2}
|V_k \cap U_1| \ge 0.33n \text{, } |V_k \cap U_2| \le 0.67n \text{ and } |U_2 - V_k| \ge 0.32n.
\end{equation}
\textbf{Step 1:} We find $m'$ blue cherries from $U_1 \cap (V-V_k)$ to $U_1 \cap V_k$, possibly avoiding bad vertices. Then we find common neighbors in $U_2$ to connect those cherries.

\textbf{Details:} The details are almost the same until the following sentence except that we replace every place of $V_2$ by $V-V_k$ and $V_1$ by $V_k$.

For all  pairs $(v_{2i},v_{2i+1})$ we can find  distinct common blue neighbors, $w_i$, in $(V-V_k) \cap U_2 - V(C)$, where $1 \le i \le m'-1$. 

By~\eqref{vku2}, there is an $\ell \in \{2,3,4\}- \{k\}$ such that 
\begin{equation}\label{vell}
|V_{\ell} \cap U_2| \ge 0.16n.
\end{equation}
 
We also find for $v_1$ a blue neighbor $w_0 \in V_{\ell} \cap U_2$ and $v_{2m'}$ a blue neighbor $w_{m'} \in V_{\ell} \cap U_2$ distinct from $\{w_1, \ldots, w_{m'-1}\}$ and $V(C)$.

We obtain a blue path $$P_1=w_0v_1u_1v_2w_1\ldots v_{2m'-1}u_{m'}v_{2m'}w_{m'}$$ of length $4m'$.

\textbf{Step 2:} We find for vertices in $B$ common neighbors in $V_k \cap U_1$, avoiding vertices already used.

\textbf{Details:} By~\eqref{vku2} and each vertex $v$ in $B$ has red degree at most $0.1n+a_1$ to $U_1$, $v$ has at least 
\begin{equation}\label{vku1}
|V_k \cap U_1|-2m' - 0.1n - a_1 > 0.6|V_k \cap U_1 - V(C)|
\end{equation}
to $U_1 \cap V_k - V(C)$. 
We can find for each pair $(z_i,z_{i+1})$ a common neighbor $r_i $ where $1 \le i \le b-1$, a blue neighbor $r_0$ of $z_1$, a blue neighbor $r_{b}$ of $z_{b}$ such that $\{r_0, \ldots, r_{b}\} \subseteq V_k \cap U_1 - V(C)$ are all distinct and $w_0r_b$ is blue.

We obtain a blue path $$P_2=r_0z_1r_1\ldots z_ir_i \ldots z_{b}r_{b}$$ of length $2b$.

\textbf{Step 3:} Take $0.9n$ vertices in $V_1$ and $0.9n$ vertices in $U_2$ including $(V-V_{\ell}) \cap U_2$ and $V(C)$. Use Theorem~\ref{berge} to find a path.

\textbf{Details:}  The details are almost the same except we replace every place of $V_1$ by $V-V_{\ell}$, $V_3$ by $V_1$ and $n_3$ by $n_1$.

\textbf{Step 4:} Finally, the rest of vertices in $U_2 \cap V_{\ell}$ have large blue degree to $(V-V_{\ell}) \cap U_1$, and we find common neighbors to include them.

\textbf{Details:} The details are almost the same except we replace every place of $V_1$ by $V-V_{\ell}$, $V_2$ by $V_{\ell}$, $V_3$ by $V_1$ and $n_3$ by $n_1$.
\end{proof}

\subsubsection{Changes of the proof when $s\ne 4$}\label{s-not-4}

When $s\ne 4$, essentially the  proof for $s=4$ works, with minor modifications.

\textbf{Case 1:} $s=3$. Then $n_2+n_3 \ge 2n-1$ implies $n_1 \ge n_2 \ge n$ and therefore 
\[
	n_1=n_2=n \text{ and } n_3=n-1.
\]
This case is addressed in Lemma~\ref{2n-1holds}.

\textbf{Case 2:} $s=5$. 
If $j=2$, then since $n_4+n_5>n$, $n_1 \ge n_2 \ge (1-\lambda)n$ and $n_3 > \frac{n}{2}$, we have $$N = n_1 + n_2 + n_3 + n_4 + n_5 \ge 2(1-\lambda)n + \frac{3n}{2} > 3n,$$ which is a contradiction with the case. By a similar argument, $j \notin \{3,4,5\}$. Thus, we may assume $j=1$. 

The argument is almost the same as when $s=4$. We only mention their difference.

In this case, $n_4+n_5>n$ implies that 
\begin{equation}\label{n1n2n3n4}
n_1 \ge n_2 \ge n_3 \ge n_4 > \frac{n}{2}, 
\end{equation}
 thus 
\begin{equation}\label{n2n3}
n_2+n_3 = 3n-1-n_1-n_4-n_5 < n + \lambda n -1.
\end{equation}
By \eqref{n1n2n3n4} and \eqref{n2n3}, we have 
\begin{equation}\label{n2n3n4n5}
\frac{n}{2}-\lambda n \le n_5 \le n_4 \le n_3 \le n_2 \le \frac{n}{2}+\lambda n.
\end{equation}

In Section \ref{s2}, in~\eqref{vi-2} we can only guarantee  $|V_p \cap U_2| \ge 0.24n$ instead of $0.33n$. By~\eqref{n2n3n4n5}, we can find a $q \in \{2,3,4,5\}-\{p\}$ such that $|V_q \cap U_1| \ge 0.16n$.

In Section \ref{s4}, in~\eqref{vku2}  we can only guarantee the largest $|V_k \cap U_1| \ge 0.24n$. Equation~\eqref{vell} still holds with $\ell \in \{2,3,4,5\}-\{k\}$. Everything else is the same.

\section{Completion of the proof of Theorem~\ref{tC2n}}\label{k2n2n-1}

In the previous three sections, we proved Theorem~\ref{tC2n} in the cases when $N-n_1-n_2\geq 3$. By~(\ref{N4n-2}),
in the case $N-n_1-n_2\leq 2$, it is sufficient
to show that for every $2$-edge-coloring of $K_{2n,2n-1}$, there is a monochromatic cycle of length exactly $2n$.
Thus, the next lemma completes the proof of Theorem~\ref{tC2n}.

\begin{lemma}\label{t2n2n-1}
If $n$ is sufficiently large, then for every $2$-edge-coloring of $K_{2n,2n-1}$, there is a monochromatic cycle of length exactly $2n$.
\end{lemma}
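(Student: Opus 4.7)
The plan is to run exactly the same machine as in Sections~\ref{setupM}--\ref{2bad} on the bipartite host $G = K_{2n,2n-1}$, with parts $V_1^*$ (size $2n$) and $V_2^*$ (size $2n-1$). A pleasant simplification is that every cycle in $G$ is automatically even, so no parity issues arise. Choose $\epsilon \ll \alpha \ll \gamma \ll 1$, apply Lemma~\ref{regularity-lemma} with each cluster inside a single part, and form the bipartite reduced graph $G^r$. As in Section~\ref{regularity}, $G^r$ is $(\ell,2,\sqrt{\epsilon})$-suitable with $\ell \approx n/t$, so Theorem~\ref{t1} applies. If some $G^r_k$ contains a monochromatic connected matching of size $(1+\gamma)\ell$, the construction of Subsection~5.2 produces a monochromatic $C_{2n}$ in $G_k$ by the usual tunability argument (any even length in a range comfortably containing $2n$ is achievable). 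So we may assume $G$ has a $(\lambda,i,j)$-bad partition with $\lambda = 128\gamma$, and WLOG $i=1$ (red).

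If $j=2$, the partition is $V_{j_0}^* \cup U_1 \cup U_2$. The definition forces $V_{j_0}^*$ to be one of the parts of $G$, so $V_{j_0}^* \in \{V_1^*, V_2^*\}$ and $|U_1|+|U_2| \in \{2n-1, 2n\}$, which guarantees that at least one of $|U_1|,|U_2|$ is $\ge n$. If $|U_2|\ge n$, the red bipartite graph between $V_{j_0}^*$ and $U_2$ has at most $\lambda n^2$ blue edges; after discarding a few high-blue-degree vertices via Lemma~\ref{lemma:good-vertices} and choosing $n$-subsets $A\subseteq V_{j_0}^*$ and $B\subseteq U_2$, the minimum degree in $G_1[A,B]$ is $n - O(\sqrt{\lambda}n)$, so Theorem~\ref{chvatal} supplies a red Hamiltonian cycle of length $2n$; the symmetric argument with blue works if $|U_1|\ge n$.

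If $j=1$, we have $V(G)=W_1\cup W_2$ with red edges sparse across $(W_1,W_2)$ and blue edges sparse inside $G[W_1]$. Writing $a_k := |W_1 \cap V_k^*|$, the case $\min(a_1,a_2) \ge n$ yields a near-complete blue bipartite subgraph on $W_1 \cap V_1^*$ versus $W_1 \cap V_2^*$, and Theorem~\ref{chvatal} delivers a blue $C_{2n}$. Otherwise WLOG $a_1 < n$; then $|W_2 \cap V_1^*| = 2n - a_1 > n$, and the size bound $|W_1| \ge (2-2\lambda)n - 1$ forces $a_2 \ge (1-3\lambda)n$. The bipartite graph $G[W_2 \cap V_1^*,\, W_1 \cap V_2^*]$ lies across the partition and is near-complete blue, so when $a_2 \ge n$ we again get a blue $C_{2n}$ via Theorem~\ref{chvatal}.

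The main obstacle is the boundary regime $(1-3\lambda)n \le a_2 < n$: to reach length $2n$ we must augment $W_1 \cap V_2^*$ with $n - a_2$ vertices from $W_2 \cap V_2^*$, but the edges between $W_2 \cap V_1^*$ and $W_2 \cap V_2^*$ lie inside $W_2$ and receive no constraint from the bad partition. The plan is a color dichotomy paralleling Sections~\ref{1bad} and~\ref{2bad}: if a positive fraction of these internal $W_2$-edges are blue, cherry-insertion and path-extension arguments in the spirit of Claims~\ref{situation2} and~\ref{situation2-2} absorb the $n-a_2$ extra vertices via Theorem~\ref{berge} or Theorem~\ref{lasvergnas}, yielding a blue $C_{2n}$; otherwise these edges are overwhelmingly red, and combined with the sparsity of red across $(W_1,W_2)$ this forces a sufficiently dense red bipartite structure between $W_2 \cap V_1^*$ and an $n$-subset of $V_2^*$ built from $(W_2 \cap V_2^*) \cup $ a few $W_1 \cap V_2^*$ vertices, on which a final Chv\'atal-style application gives a red $C_{2n}$. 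Carrying out this boundary analysis — essentially the bipartite specialization of the extremal case work already executed in Sections~\ref{1bad} and~\ref{2bad} — is the technical heart of the proof.
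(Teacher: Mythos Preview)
Your high-level plan matches the paper exactly: apply regularity, invoke Theorem~\ref{t1}, and in the absence of a large connected matching analyze the bad partitions. Your treatment of the $(\lambda,1,2)$-bad partition is fine and is essentially what the paper does (it refers to Case~1 of Subsection~\ref{j-not-3}). There is a harmless slip in your $(\lambda,1,1)$ discussion: when $\min(a_1,a_2)\ge n$, the bipartite graph $G[W_1\cap V_1^*,W_1\cap V_2^*]$ is near-complete \emph{red}, not blue (blue is sparse inside $W_1$); this is the paper's Case~1.1.

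The real issue is your handling of the boundary regime. You propose a density dichotomy on the colour of edges inside $W_2$, and for the ``overwhelmingly red'' branch you want a red Chv\'atal-type application between $B:=W_2\cap V_1^*$ and an $n$-set built from $D:=W_2\cap V_2^*$ together with a few vertices of $C:=W_1\cap V_2^*$. But red is sparse across $(W_1,W_2)$, so those added $C$-vertices typically have only $O(\lambda n)$ red neighbours in $B$; the Chv\'atal hypothesis does not follow. The paper does \emph{not} argue by density here: it analyzes the \emph{matching number} of $G_2[B,D]$. If there is a blue matching of size two, two short blue paths in $G_2[A,D]$ and $G_2[B,C]$ (each found via Theorem~\ref{berge}) are glued by the matching edges. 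If the blue matching has size at most one, then deleting a single vertex makes $G_1[B,D]$ complete red, and the paper combines this with the near-complete red structure on $G_1[A,C]$ (and, in the tightest subcases~1.7--1.8, with a small set $R$ of $C$-vertices that happen to have many red edges to $B$) to build the red $C_{2n}$. Your appeal to Sections~\ref{1bad}--\ref{2bad} as a template is also off: those sections assume $N-n_1-n_2\ge 3$, so the cherry-absorption Claims~\ref{situation2} and~\ref{situation2-2} do not specialize to the bipartite host; the boundary work in Section~\ref{k2n2n-1} is genuinely new. Replace your density dichotomy with the matching-based one and you will be aligned with the paper's proof.
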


\begin{proof}
Let $G = K_{2n,2n-1}$. From Section~\ref{regularity}, we know that if the reduced graph $G^r$ has a connected matching of size at least $(1+\gamma)n$, then we can find a monochromatic cycle of length exactly $2n$. Suppose $G^r$ has no connected matching of size $(1+\gamma)n$ and thus, by Section~\ref{regularity} again, $G$ has a $(\lambda, i,j)$-bad partition for some $i \in [2]$ and $j \in [2]$.

Without loss of generality, we assume $i = 1$ and discuss separately cases $j=1$ and $j=2$.

\textbf{Case 1:} $G$ has a $(\lambda, 1,1)$-bad partition.
 By the setup in Section~\ref{1bad}, we have a partition $W_1 \cup W_2$ of $V(G)$ such that

\textbf{(i):} $(1-\lambda)n \le |W_2| \le (1+\lambda)n_1 = (1+\lambda)\cdot 2n$.

\textbf{(ii):} $|E(G_1[W_1, W_2])| \le \lambda n^2$.

\textbf{(iii):} $|E(G_2[W_1])| \le \lambda n^2$.

\textbf{(iv):} For every $v \in W_1$, $d_1(v, W_1) > 0.74 d(v, W_1)$.

\textbf{(v):} For every $w \in W_2$, $d_1(w, W_1) < 0.8d(w, W_1)$, i.e., $d_2(w,W_1) \ge 0.2 d(w, W_1)$.

We know $|W_1| = N - |W_2| = 4n-1 - |W_2|$, so by Condition~\textbf{(i)}, 
\begin{equation}\label{w1}
(2 - 3\lambda)n \le |W_1| \le (3 + \lambda)n.
\end{equation}

For simplicity, let $A:=W_1 \cap V_1$, $B:=W_2 \cap V_1$, $C:= W_1 \cap V_2$ and $D:=W_2 \cap V_2$.

\textbf{Case 1.1:} $|A| \ge n$ and $|C| \ge n$. Let $X \subseteq A$ and $Y \subseteq C$ such that $|X|=|Y|=n$. By Condition~\textbf{(iv)}, we know that each $x \in X$ has $$d_1(x,Y) \ge d_1(x, W_1)-(|C|-|Y|) \ge 0.74 \cdot d(x, W_1)+n-|C|$$
$$=0.74|C|+n-|C|=n-0.26|C| \ge n - 0.26(2n-1) \ge 0.4n.$$
Similarly, we know that each $y \in Y$ has $$d_1(y, X) \ge 0.4n.$$
By Condition~\textbf{(iii)}, we know that the number of vertices in $X$ with at least $0.95n$ edges to $Y$ in $G_1$ is at least $n - 20 \lambda n$ and the number of vertices in $Y$ with at least $0.95n$ edges to $X$ in $G_1$ is at least $n - 20 \lambda n$. Therefore, if we order vertices in $X$ by their degrees in non-decreasing order, say the ordering follows from $d(x_1) \le \ldots \le d(x_n)$, then the smallest index $i$ such that $d(x_i) \le i+1$ has the property that $d(x_i) \ge 0.95n$. Similarly, if we order vertices in $Y$ by their degree in non-decreasing order, say the ordering follows from $d(y_1) \le \ldots \le d(y_n)$, then the smallest index $j$ such that $d(y_j) \le j+1$ has the property that $d(y_j) \ge 0.95n$. Since $d(x_i)+d(y_j) > n+2$, by Theorem~\ref{berge}, we know $G_1[X, Y]$ is Hamiltonian bi-connected and we can find a cycle in $G_1$ of length exactly $2n$.

\begin{remark}\label{case1.1}
The same proof shows  that there is a red cycle of length exactly $\min\{|A|, |C|\}$.
\end{remark}

\textbf{Case 1.2:} $|A| \le (1-30\lambda)n$. By equation~\eqref{w1} and $|V_1|=2n$, 
\begin{equation}\label{size1.2}
|C| \ge (1+27\lambda)n \text{ and } |B| \ge (1+30\lambda)n.
\end{equation}
By Condition~\textbf{(iii)}, there are at most $20 \lambda n$ vertices in $C$ with red degree at least $0.05n$ to $B$. Let $C'$ be the $20 \lambda n$ vertices in $C$ of largest red degree to $B$. Let $Y$ be a subset of $C-C'$ with size $n$. Similarly, let $B'$ be the $20 \lambda n$ vertices in $B$ of largest red degree to $C$ and we define $X \subseteq B - B'$ of size $n$. We show there is a blue cycle of length exactly $2n$ in $G_2[X,Y]$.

By the definitions of $X$ and $Y$, we know that $d_2(x, Y) \ge 0.95n$ for $x \in X$ and $d_2(y, X) \ge 0.95n$ for $y \in Y$. By a similar argument with the last paragraph of \textbf{Case 1.1}, we can find a blue cycle of length exactly $2n$ in $G_2[X,Y]$.

\textbf{Case 1.3:} $|C| \le (1-30\lambda)n$. We find a blue cycle by an argument similar to \textbf{Case 1.2}.

\textbf{Case 1.4:} $|A| \ge (1+30\lambda) n$ and $|D| \ge n$. By Condition~\textbf{(iii)}, there are at most $20 \lambda n$ vertices in $A$ of red degree at least $0.05n$ to $D$. Let $X'$ be the $20 \lambda n$ vertices in $A$ of largest red degree to $D$. By Condition~\textbf{(v)}, we know $d_2(y, W_1) \ge 0.2 d(y,W_1)$ for $y \in D$.


By Condition~\textbf{(ii)}, there are at most $20 \lambda n$ vertices in $D$ of red degree at least $0.05n$ to $A$. Let $R$ be the $20 \lambda n$ vertices in $D$ of largest red degree to $A$. Since $d_2(v, W_1) \ge 0.2 d(v, W_1) > 0.2n$ for each $v \in R$ and $|R| = 20 \lambda n =:m$, we can order vertices in $R$ so that $R=\{r_1, \ldots, r_{m}\}$ and find for $R$ a distinct collection of blue cherries to $A-X'$. We may assume the other ends of the cherries are $S = \{s_1, \ldots, s_{2m}\}$ so that each $s_{2i-1}r_is_{2i}$ is a cherry. Since $S \subseteq A-X'$, each $s_i$ has blue degree at least $|D|-0.05n$ to $D$ and we can find for each $(s_{2i}, s_{2i+1})$ a distinct common blue neighbor $f_i$ in $D-R$, where $1 \le i \le m-1$. and thus form a blue path $$P_1 = s_1r_1s_2f_1s_3\ldots s_{2m}$$ from $s_1$ to $s_{2m}$. We then extend the path $P_1$ by finding a blue neighbor $r_0$ of $s_1$ in $D-R$ distinct from each vertex chosen in $P_1$. Note now $P_1$ has length $4m-1$ from $r_0$ to $s_{2m}$.

Let $X \subseteq (A - X' - V(P_1)) \cup \{s_{2m}\}$ such that $s_{2m} \in X$ and $|X| = n-2m+1$. Let $Y \subseteq (D-R-V(P_1)) \cup \{r_0\}$ such that $|Y| = n-2m+1$. Since $d_2(y, X) \ge 0.9n$ for $y \in Y$ and $d_2(x, Y) \ge 0.9n$ for $x \in X$, we claim that $G_2[X,Y]$ is Hamiltonian bi-connected by a similar argument with the last paragraph of \textbf{Case 1.2}. Therefore, we can find a blue path $P_2$ of length $2n-4m+1$ from $r_0$ to $s_{2m}$.

Finally, we glue $P_1$ and $P_2$ at $r_0$ and $s_{2m}$ to complete a blue cycle of length exactly $2n$. 

\textbf{Case 1.5:} $|C| \ge (1+30\lambda) n$ and $|B| \ge n$. It is similar to \textbf{Case 1.4}.

\textbf{Case 1.6:} $|B| \ge n$ and $|D| \ge n$. 

$\bullet$ If there is no blue edge in $G[B,D]$, then $G_1[B,D]$ is a complete bipartite graph and thus we can find a red cycle of length exactly $2n$.

$\bullet$ If there is a blue matching of size $2$ in $G_2[B,D]$, say the two matching edges are $v_1v_2$ and $u_1u_2$, where $v_1, u_1 \in V_1$ and $v_2, u_2 \in V_2$, then by \textbf{Case 1.2} and \textbf{Case 1.3}, we know $|A| \ge (1-30\lambda)n$ and $|C| \ge (1-30 \lambda)n$. By Condition~\textbf{(ii)}, there are at most $20 \lambda n$ vertices in $A$ such that the red degree to $D$ is at least $0.05n$ and there are at most $20 \lambda n$ vertices in $D$ such that the red degree to $A$ is at least $0.05n$. Similarly, there are at most $20 \lambda n$ vertices in $C$ such that the red degree to $B$ is at least $0.05n$ and there are at most $20 \lambda n$ vertices in $B$ such that the red degree to $C$ is at least $0.05n$.

Let $A' \subseteq A$ be the $|A|-20 \lambda n$ vertices with the largest blue degree to $D$, $D' \subseteq D$ be the $|D|-20 \lambda n$ vertices with the largest blue degree to $A$, $C' \subseteq C$ be the $|C|-20 \lambda n$ vertices with the largest blue degree to $B$ and $B' \subseteq B$ be the $|B|-20 \lambda n$ vertices with largest blue degree to $C$. 

By Condition~\textbf{(v)}, $d_2(u_2,W_1) \ge 0.2 d(u_2, W_1) \ge 0.19n$. We find a blue neighbor $w_1 \in A'$ of $u_2$. Let $A'' \subseteq A$ such that $w_1 \in A''$ and $|A''| = \lfloor n/2 \rfloor$. Let $D'' \subseteq D'$ such that $v_2 \in D''$ and $|D''| = \lfloor n/2 \rfloor$. By $A'' \subset A'$ and $D'' \subseteq D'$, $d_2(v, A'') \ge 0.4n$ for every $v \in D''$ and $d_2(v, D'') \ge 0.4n$ for every $v \in A''$. Since $0.4n+0.4n > 0.5n+1$, we can use Theorem~\ref{berge} to find a blue path $P_1$ of length $2 \cdot (\lfloor n/2 \rfloor - 1)$ from $v_2$ to $w_1$ and then extend $P_1$ by adding $w_1u_2$. Similarly, we can find a blue path $P_2$ with vertices in $B \cup C$ from $v_1$ to $u_1$ of length exactly $2 \cdot (\lceil n/2 \rceil - 1)$.

Finally, we connect $P_1$ and $P_2$ by adding the edge $v_1v_2$ and $u_1u_2$ to form a blue cycle of length exactly $2n$.

\begin{remark}\label{twoedges}
The argument also works whenever all of $A,B,C,D$ are of size in $[n-100\lambda,n+100\lambda n]$.
\end{remark}

$\bullet$ If the size of a maximum matching in $G_2[B,D]$ is exactly one, then let $v_1v_2$ be a blue edge, and say $\{v_2\} \subseteq D$ be a smallest vertex cover in $G_2[B,D]$ (the case $\{v_1\}$ is a smallest vertex cover has a similar proof and is simpler). If we delete $v_2$, then the remaining graph is a complete bipartite graph in $G_1$. If $|D| \ge n+1$ then we can find a red cycle of length $2n$ in $G_1[B,D-\{v_2\}]$. Thus, we may assume $|D|=n$ and $|C|=n-1$.


Let $B'' \subseteq B$ such that $|B''|=n$. We find a blue cycle in $G_2[B'',C \cup \{v_2\}]$. By Condition~\textbf{(v)}, $d_2(v,C) \ge 0.2d(v,C) \ge 0.19n$ for each $v \in B''$. We also know that each vertex $v_c$ in $C \cup \{v_2\}$ can have red degree at most one to $B$ (so it has blue degree at least $n-1$ to $B''$) since otherwise with vertices in $D-\{v_2\}$ we can find a red cycle of length $2n$. Since $n-1+0.19n > n+1$, we can use Theorem~\ref{berge} to find a blue cycle of length exactly $2n$.

\textbf{Case 1.7:} $n+1 \le |A| \le (n+30\lambda n)$ and $n \le |D| \le n+30 \lambda n$. By Remark~\ref{twoedges}, the size of a maximum matching in $G_2[B,D]$ is at most one. Let $v_1v_2 \in G_2$ such that $v_1 \in B$ and $v_2 \in D$. We may also assume that $\{v_2\}$ is a minimum vertex cover of $G_2[B,D]$ (the case $\{v_1\}$ is a smallest vertex cover has a similar proof and is simpler). Let $R \subseteq A$ be the set of vertices with red degree at least $0.8n$ to $D$. By Condition~\textbf{(ii)}, we know $|R| \le 2\lambda n$. 

We first show that $|D| = n$. Assume not, i.e., $|D| \ge n+1$. Then $|D-\{v_2\}| \ge n$. 

If $|A-R| \ge n$, then we find a blue cycle of length $2n$ in $G_2[A-R,D]$. To do so, take a subset $A' \subseteq A-R$ of size $n$ and $D' \subseteq D-\{v_2\}$ of size $n$. By Condition~\textbf{(v)}, $d_2(v, A) \ge 0.2 d(v,A) \ge 0.2n$ for $v \in D$ and thus $d_2(v, A') \ge 0.19n$ for $v \in D'$. By the definition of $A'$, we know $d_2(v, D') \ge 0.2n$ for $v \in A'$. By Condition~\textbf{(ii)}, we also know there are at most $20 \lambda n$ vertices in $A'$ of red degree at least $0.05n$ to $D$ and thus if we order vertices in $A'$ and $D'$ in non-decreasing order respectively, say $A' = \{u_1, \ldots, u_n\}$ and $D' = \{w_1, \ldots, w_n\}$, then the smallest index such that $d_2(u_i) \le i+1$ has $d_2(u_i) \ge 0.95n$ and the smallest index such that $d_2(w_j) \le j+1$ has $d_2(u_j) \ge 0.19n$. Since $0.95n+0.19n > n+1$, we can use Theorem~\ref{berge} to find a blue cycle of length exactly $2n$ in $G_2[A',D']$.

If $|A-R| \le n-1$, then we find a red cycle of length exactly $2n$ in $G_1[B \cup R, D-\{v_2\}]$. To do so, note that 1) $|B \cup R| = 2n - |A-R| \ge n+1$, 2) $G_1[B,D-\{v_2\}]$ is a red complete bipartite graph and 3) each vertex in $R$ has degree at least $0.8n-1$ to $D-\{v_2\}$. We can use Theorem~\ref{berge} to find a red cycle of length exactly $2n$, since this red graph is very dense and has both parts large enough.

\begin{remark}\label{adlarge}
The proof also shows that we can find a monochromatic cycle whenever $|A| \in [n-100\lambda n,n+100 \lambda n]$ and $n+1 \le |D| \le (1+100\lambda)n$.
\end{remark}

We assume $|D|=n$ from now on. Since vertices in $R$ has red degree at least $0.8n$ to $D$, if there are at least two vertices in $R$, say $r_1$ and $r_2$, then we find a red common neighbor $w \in D$ for $r_1$ and $r_2$. Note that by Remark~\ref{case1.1}, $G_1[A,C]$ is Hamiltonian-bi connected. Therefore, we can find a red cycle of length exactly $2n$ from a path $P_1$ from $r_1$ to $r_2$ of length $2n-2$ glued with the path $P_2 = r_1wr_2$. The only case remained is $|R|\le 1$. Then we have $|A-R| \ge n$ and we find a blue cycle of length $2n$ by the same argument as in two paragraphs ahead of this paragraph.

\begin{remark}\label{diff}
Note that the last sentence of the previous paragraph shows why we need $|A| \ge n+1$.
\end{remark}

The only  uncovered  case is :\\
\textbf{Case 1.8:} $n \le  |C| \le (1+30\lambda)n$ and $(1-30\lambda)n \le |A| \le n-1$.
We define $R$ to be vertices in $C$ with red degree at least $0.8n$ to $B$. By Remark~\ref{twoedges}, we may assume that the size of a maximum matching in $G_2[B,D]$ is at most one. 

If $|C-R| \ge n$, then we find a blue cycle of length exactly $2n$ in $G_2[B,C-R]$. Thus, we may assume that 
\begin{equation}\label{c-r}
|C-R| \le n-1.
\end{equation}

$\bullet$ If there is no edge in $G_2[B,D]$, then $G_1[B,D]$ is a complete bipartite graph and we are done if $|D \cup R| \ge n$. Thus, we may assume that $|D \cup R| \le n-1$.  Since $|C-R|+|R|+|D| = 2n-1$, $|C-R| \ge n$  and we have a contradiction.

$\bullet$ If the size of a maximum matching in $G_2[B,D]$ is exactly one, say $v_1v_2$ is such a matching with $v_1 \in B$ and $v_2 \in D$, then one of $\{v_1\}$ or $\{v_2\}$ is a minimum vertex cover of $G_2[B,D]$. We may assume that $\{v_2\}$ is a minimum vertex cover of $G_2[B,D]$ and the case when $\{v_1\}$ is a minimum vertex cover share similar proof and is simpler.

Since $G_1[B,D-\{v_2\}]$ is a complete bipartite graph, we are done if $|D| \ge n+1$. Thus, we may assume $|D| \le n$. Moreover, if $|D \cup R - \{v_2\}| \ge n$ then we can find a red cycle of length $2n$ in $G_1[D \cup R - \{v_2\},B]$, hence we may assume $$|D|+|R|-1 \le n-1.$$ But we also know that $|D|+|R|+|C-R| = 2n-1$. Thus, $$|C-R| \ge n-1,$$ and by~\eqref{c-r} we know $$|C-R|=n-1\quad \text{ and }\quad |D \cup R| = n.$$ If $v_2$ has at least two red edges to $B$ then we can find a red cycle in $G_[B,D \cup R]$ by first consider the two edges incident with $v_2$. Thus, $v_2$ has at most one red edge to $B$ and thus has at least $|B|-1$ blue edges to $B$. We can find a blue cycle in $G_2[(C-R)\cup \{v_2\}, B]$.


\medskip
\textbf{Case 2:} $G$ has a $(\lambda, 1,2)$-bad partition.
 This case is covered in \textbf{Case 1} in Subsection~\ref{j-not-3} (with the same proof).
\end{proof}

\section{Proof of Theorem~\ref{tCgeq2n} on monochromatic $C_{\geq 2n}$}\label{cc2n}	

For large $n$, we need to prove the theorem for every
 $N$-vertex complete $s$-partite graph $G$ with parts $(V_1^*, V_2^*,\ldots, V_s^*)$ such that the numbers $n_i=|V_i^*|$ satisfy $n_1 \ge n_2 \ge \ldots \ge n_s$ and Conditions~(\ref{jj1}),~(\ref{jj2}),~(\ref{jf1}) and~(\ref{jm1}).

Consider a possible counterexample $G$ with a $2$-edge-coloring $f$ and the minimum $N+s$. If $N-n_1-n_2\geq 3$, then
restriction~(\ref{jm2}) does not apply, so by Theorem~\ref{tC2n}, $G$ has a monochromatic $C_{2n}$, a contradiction. If $N-n_1-n_2\leq 2$ and~(\ref{jm2}) holds, then again by Theorem~\ref{tC2n}, $G$ has a monochromatic $C_{2n}$. Hence we need to consider only the case that
$N-n_1-n_2\leq 2$,  all~(\ref{jj1}),~(\ref{jj2}),~(\ref{jf1}) and~(\ref{jm1}) hold, but~(\ref{jm2}) does not hold. In particular, $n_1\geq 2n-1$, 
 but $N\leq 4n-2$. This means $N-n_1\leq (4n-2)-(2n-1)=2n-1$, so by~(\ref{jj2}), $N=4n-2$ and $n_1=2n-1$. If $N-n_1-n_2\leq 1$, this does not satisfy~(\ref{jm1}). Thus $N-n_1-n_2= 2$, and hence $G\supseteq K_{2n-1,2n-3,2}$. Therefore,  the following lemma implies
 Theorem~\ref{tCgeq2n}.

\begin{lemma}\label{t2n-12n-32}
If $n$ is sufficiently large, then for every $2$-edge-coloring of $K_{2n-1,2n-3,2}$, there is a monochromatic cycle of length at least $2n$.
\end{lemma}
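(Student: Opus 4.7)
The plan is to follow the same overall strategy as in Lemma~\ref{t2n2n-1} for $K_{2n,2n-1}$, adapting it to the $3$-partite host graph $K_{2n-1,2n-3,2}$ and exploiting the fact that we only need a monochromatic cycle of length \emph{at least} $2n$, not exactly $2n$. Set $V_1'=V_1$ and $V_2'=V_2\cup V_3$, so $|V_1'|=|V_2'|=2n-1$, and treat $G$ as a bipartite graph between $V_1'$ and $V_2'$ enriched by the $2(2n-3)$ edges in $G[V_2,V_3]$. Apply the $2$-colored Regularity Lemma as in Section~\ref{regularity}: either the reduced graph $G^r$ has a monochromatic connected matching of size at least $(1+\gamma)n$, in which case Section~\ref{regularity} (or Lemma~\ref{L1}) yields a monochromatic cycle of length at least $2n$, or else $G$ admits a $(\lambda,i,j)$-bad partition.

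In the bad-partition case, assume WLOG $i=1$, and let $(W_1,W_2)$ be a $(\lambda,1,1)$-bad partition; introduce $A=W_1\cap V_1'$, $B=W_2\cap V_1'$, $C=W_1\cap V_2'$, $D=W_2\cap V_2'$. Run through Cases 1.1--1.6 of the proof of Lemma~\ref{t2n2n-1} verbatim: each of them produces a monochromatic cycle of length exactly $2n$ in $G[V_1',V_2']$ under the stated size conditions, using Chv\'atal/Berge-type theorems on the dense bipartite remainders, and therefore certainly a $C_{\ge 2n}$. The analogues of Cases 1.7--1.8 and the $(\lambda,1,2)$ case go through by Remarks~\ref{adlarge} and~\ref{diff} and the modifications in Section~\ref{j-not-3}, so the only genuinely new cases left are the two symmetric tight configurations: (a) $|A|=n-1$ with $n\le |C|\le (1+30\lambda)n$ (so $|B|=n$ and $|D|\in[(1-30\lambda)n-1,\,n-1]$), and (b) the mirror case $|C|=n-1$ with $n\le |A|\le (1+30\lambda)n$.

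For these remaining cases, the plan is to reduce, as in the commented draft, to the sub-situation in which the blue graph $G_2[B,D]$ has a size-one maximum matching with vertex cover $\{v_1\}\subset B$ or $\{v_2\}\subset D$, so that $G_1[B\setminus\{v_1\},D]$ (resp.\ $G_1[B,D\setminus\{v_2\}]$) is complete bipartite red, and every set equality is forced by $N=4n-2$. In all other sub-configurations the dense bipartite arguments of Cases~1.6--1.7 (together with Remark~\ref{case1.1} giving that $G_1[A,C]$ is Hamiltonian bi-connected) already deliver a monochromatic $C_{2n}$. The two vertices $u_1,u_2$ of $V_3\subset V_2'$ are then used as follows. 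Consider the colors on the four edges from $\{u_1,u_2\}$ to $V_2$ inside $G_1$: if there is a red matching of size $2$ from $\{u_1,u_2\}$ into $C\cup D$ reaching vertices on the ``correct'' sides, splice it with a long red path in $G_1[A\cup(B\setminus\{v_1\}),C\cup(D\setminus\{v_2\})]$ (using Remark~\ref{case1.1} and Theorem~\ref{berge}) to obtain a red cycle of length $\ge 2n$; otherwise, by vertex-cover analysis in $G_1[\{u_1,u_2\},V_2]$ we find many blue edges from $\{u_1,u_2\}$ into both $C$ and $D$, and build a blue $C_{\ge 2n}$ by joining a blue Hamiltonian path in $G_2[B,C]$ of length $\approx n-1$ to a blue Hamiltonian path in $G_2[A,D]$ of length $\approx n-1$ through the two $V_3$-edges.

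The main obstacle will be the last step: organizing the $V_3$-edge casework so that every possible $2$-coloring of the $8$ edges between $V_3$ and $V_1\cup V_2$ yields either a red matching usable by the bipartite-red backbone $G_1[B\setminus\{v_1\},D]$ (or $G_1[B,D\setminus\{v_2\}]$) or a pair of blue edges that crosses from $D$ to $C$ through $V_3$ and connects to the long monochromatic paths guaranteed by Berge's theorem. The parity issue that plagued the proof of Theorem~\ref{tC2n} for this graph disappears, since $V_3$ provides a single odd ``detour'' that converts even bipartite cycles of length $2n-2$ in $G[V_1,V_2]$ into $(2n+1)$-cycles through $V_3$, so the flexibility of allowing any length $\ge 2n$ kills the only extremal obstruction observed in Example~\ref{example-7}.
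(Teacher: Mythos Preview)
Your proposal is correct and follows essentially the same route as the paper: set $V_1'=V_1$, $V_2'=V_2\cup V_3$, invoke the case analysis of Lemma~\ref{t2n2n-1} on the bipartite graph $G[V_1',V_2']$ to dispose of everything except the two tight configurations $|A|=n-1$, $n\le |C|\le (1+30\lambda)n$ and its mirror, and then finish with the $V_2$--$V_3$ edges.

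The one place where the paper is cleaner than your sketch is the endgame with $V_3$. You propose a dichotomy based on a red matching of size $2$ out of $\{u_1,u_2\}$ versus a blue crossing through both $V_3$-vertices; the paper instead uses a \emph{single} edge between $C\setminus R$ and $D$. After the reductions one has $|R|=1$, so at least one $u_i$ lies in $(C\setminus R)\cup D$ and supplies such an edge automatically. If it is red, a red path of length $2n-2$ in $G_1[A,C]$ (Remark~\ref{usefulremark}) together with a $3$-edge detour through $B\setminus\{v_1\}$ closes a red $C_{2n+1}$; if it is blue, a blue path of length $2n-3$ in $G_2[C\setminus R,B]$ together with a $4$-edge detour through $v_1$, $D$ and $A$ closes a blue $C_{2n+1}$. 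This is exactly the ``single odd detour'' you identify in your last paragraph, and it replaces your matching-of-size-$2$ casework entirely. (Also, your stated lengths ``$\approx n-1$'' for Hamiltonian paths in $G_2[B,C]$ and $G_2[A,D]$ are off by a factor of two; those paths have length $\approx 2n$, though of course any length $\ge 2n$ suffices here.)
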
	

\begin{proof}
The set-up of the proof is similar to the proof of Lemma~\ref{t2n2n-1}. We only show the differences.

Let $V_3 = \{u_1, u_2\}$.
Define $V_1' = V_1$ and $V_2' = V_2 \cup V_3$.  We first consider $G[V_1', V_2']$ and then use the fact that $V_2' = V_2 \cup V_3$. Note that we have $|V_1'| = |V_2'| = 2n-1$.

By the proof in Lemma~\ref{t2n2n-1}, we narrow the uncovered cases to 1) $|A|=n-1$ and $n \le |C| \le (1+30\lambda)n$ and 2) $n \le |A| \le (1+30 \lambda)n$ and $|C| = n-1$.

\textbf{Case 1:} $|A|=n-1$ and $n \le |C| \le (1+30\lambda)n$.

Then we know $|B| = n$ and $(1-30\lambda)n - 1 \le |D| \le n-1$. By Remark~\ref{twoedges}, we know the size of a maximum matching, $\alpha'$, in $G_2[B,D]$ is at most one. Let $R$ be the set of vertices in $C$ with at least $0.8n$ red neighbours in $B$. By Condition~\textbf{(ii)}, $|R| \le 2\lambda n$. 

\begin{claim}\label{c-r,b}
If $|C-R| \ge n$ then we find a blue cycle of length $2n$ in $G_2[B,C-R]$.
\end{claim}
\begin{proof}
We pick $C' \subseteq C-R$ of size $n$. We know 

1) By Condition~\textbf{(v)} and the definition of $R$, each vertex in $B$ has blue degree at least $0.19n$ to $C'$ and each vertex in $C$ has blue degree at least $0.2n$ to $B$,

2) By Condition~\textbf{(ii)}, all but at most $20\lambda n$ vertices in $B$ has red degree at least $0.05n$ to $C'$ and all but at most $20 \lambda n$ vertices in $C$ has red degree at most $0.05n$ to $B$,

and

3) If we order vertices in $C'$ and $B$ in non-decreasing order by their degree in $G_2[C',B]$ respectively, then the smallest index with $d(x_i)\le i+1$ and the smallest index with $d(y_j) \le j+1$ satisfies $d(x_i) \ge 0.95n$ and $d(y_j) \ge 0.95n$. 

Since $0.95n+0.95n > n+1$, we can use Theorem~\ref{berge} to show $G_2[C',B]$ is Hamiltonian bi-connected and thus we can find a cycle by fixing an edge $e$ first and then find a Hamiltonian path in $G_2[C',B]$ without $e$, which is still Hamiltonian bi-connected.
\end{proof}
 
\begin{remark}\label{usefulremark}
Similarly to Claim~\ref{c-r,b}, we can show 

1) for any two vertices $c_1\in C, a_1 \in A$, there is a red path of length $2n-3$ from $c_1$ to $a_1$ in $G_1[A,C]$.

2) for any two vertices $c_1, c_2 \in C$, there is a red path of length $2n-2$ from $c_1$ to $c_2$ in $G_1[A,C]$.

3) for any two vertices $b_1, b_2 \in B$, there is a blue path of length $2n-2$ from $b_1$ to $b_2$ in $G_2[B,C-R]$.

4) for any two vertices $c_1 \in C-R, b_1 \in B$, there is a blue path of length $2n-3$ from $c_1$ to $b_1$ in $G_2[B,C-R]$.
\end{remark}

Therefore, we may assume 
\begin{equation}\label{c-r2}
|C-R| \le n-1 \text{ and thus }|D \cup R| \ge n.
\end{equation}

If $|R| \ge 2$, say $r_1,r_2 \in R$, then we find a common neighbour $r_b \in B$ for them. By Remark~\ref{usefulremark}, we can find a red path $P_1$ of length $2n-2$ in $G_1[C,A]$ and then extend $P_1$ to a red cycle of length $2n$ by adding $r_1r_br_2$. Thus, we may assume
\begin{equation}\label{sizes}
|C-R|=n-1, \text{ }|R|=1 \text{ and }|D|=n-1.
\end{equation}

Let $R = \{r\}$. If $\alpha' = 0$, then $G_1[B,D]$ is a complete bipartite graph. We can find a red cycle of length $2n$ in $G_1[B,D \cup R]$ by first fixing two neighbours in $B$ for $r$. 


If $\alpha' = 1$, say $v_1v_2$ is a maximum matching in $G_2[B,D]$ where $v_1 \in B$ and $v_2 \in D$. If $\{v_2\}$ is a minimum vertex cover, then $v_2$ has at most one red edge to $B$ since otherwise we find a red cycle by~\eqref{sizes} in $G_1[D \cup R, B]$ by first fixing two neighbours in $B$ for $v_2$. Thus, we may assume $v_2$ has at least $|B|-1$ blue edges to $B$ and thus we can find a blue cycle in $G_2[(C-R)\cup \{v_2\}, B]$ by Remark~\ref{usefulremark}.

We may assume $\{v_1\}$ is a minimum vertex cover. Note that $v_1$ has at most one red edge to $D$ since otherwise we find a red cycle in $G_1[B,D \cup R]$ by first fixing two red neighbours for $v_1$. For the same reason, each vertex in $A$ has at most one red edge to $D$. We use vertices in $V_3$ to find a monochromatic cycle.

If there is a red edge from $D$ to $C-R$, say $u_1y_1$ with $u_1 \in D$ and $y_1 \in C$, then we find a red cycle of length at least $2n$. To do so, by Remark~\ref{usefulremark}, we first find a red path $P_1$ from $y_1$ to $r$ of length $2n-2$ in $G_1[A,C]$. Then we find for $r$ and $u_1$ a red common neighbour in $B-\{v_1\}$, say $r_b$. Finally, we extend $P_1$ to a red cycle of length $2n+1$ by adding the red path $rr_bu_1y_1$. Since at least one of $u_1$ and $u_2$ are not in $R$, say $u_1 \notin R$, we may assume there is a blue edge $u_1y_1$ from $C-R$ to $D$ with $u_1 \in C-R$ and $y_1 \in D$. 

We find a blue cycle of length at least $2n$ by using $u_1$. To do so, by Condition~\textbf{(v)}, each vertex in $D$ has blue degree at least $0.2n-1$ to $A \cup \{v_1\}$ and each vertex in $C$ has blue degree at least $0.2n-1$ to $B$. We first fix a blue neighbour $z_1$ of $y_1$ with $z_1 \in A$ and then find a common blue neighbour, say $y_2 \in D-\{y_1\}$, for $v_1$ and $z_1$. We can find a blue path $P_1$ of length $2n-3$ from $u_1$ to $v_1$ in $G_2[C-R, B]$ by Remark~\ref{usefulremark} and then extend $P_1$ by adding the path $v_1y_2z_1y_1u_1$ to obtain a blue cycle of length $2n+1$.

\textbf{Case 2:} $n \le |A| \le (1+30 \lambda)n$ and $|C| = n-1$.
It is symmetric to \textbf{Case 1} until we use vertices in $V_3$. Thus, we may assume the maximum size of a matching in $G_2[B,D]$ is one, $v_1v_2$ is one maximum matching and $\{v_2\}$ is a minimum vertex cover and every vertex in $C \cup \{v_2\}$ has blue degree at least $|B|-1$ to $B$. Moreover, we may define $R \subseteq A$ similarly to \textbf{Case 1} and assume
\begin{equation}
|A-R| = n-1,\text{ }|R|=1\text{ and }|B|=n-1.
\end{equation}

If there is a red edge from $C$ to $D - \{v_2\}$, say $u_1y_1$ with $u_1 \in C$ and $y_1 \in D$, then we can find a red cycle of length at least $2n$. To do so, we first find a red path $P_1$ of length $2n-3$ from $u_1$ to $r$ by Remark~\ref{usefulremark}. Then we find a red neighbour $r_d$ of $r$ in $D-\{v_2,y_1\}$ and a common red neighbour $r_b$ of $r_d$ and $y_1$ in $B$. We extend the path $P_1$ to a red cycle of length $2n+1$ by adding the red path $rr_dr_by_1u_1$ to $P_1$.

Then we may assume there is a blue edge from $C$ to $D-\{v_2\}$, say $u_1y_1$ with $u_1 \in C$ and $y_1 \in D - \{v_2\}$. We first find a blue path of length $2n-2$ from $y_1$ to $v_2$ in $G_2[A-R,D]$ by Remark~\ref{usefulremark} and then find a common blue neighbour $y \in B$ for $v_2$ and $u_1$. Finally, we add the path $y_1u_1yv_2$ to $P_1$ to obtain a blue cycle of length $2n+1$.
\end{proof}

\section{Proof of Theorem~\ref{tP2n} on monochromatic $P_{2n}$}\label{p2n}

\subsection{A useful lemma}
If $G$ contains a monochromatic $C_{2n}$, then it certainly contains a monochromatic $P_{2n}$. So suppose
$G=K_{n_1,\ldots,n_s}$ does not  have a monochromatic  $C_{2n}$.
The lemma below is very helpful here and in the next section.

\begin{lemma}\label{NoC2n} Let  $s\geq 3$ and $n$ be sufficiently large. Let $n_1\geq\ldots\geq n_s$ and $N=n_1+\ldots+n_s$
satisfy~(\ref{jj1}) and~(\ref{jj2}). Suppose that for some $2$-edge-coloring $f$ of the complete $s$-partite graph
$G=K_{n_1,\ldots,n_s}$, there are no monochromatic cycles $C_{2n}$.
Then $G$ contains a monochromatic $P_{2n+1}$.
\end{lemma}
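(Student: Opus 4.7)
The plan is to combine Theorem~\ref{tC2n} and Theorem~\ref{tCgeq2n} to handle most of the range, and to run a tailored regularity plus bad-partition argument for the few residual shapes. Under the hypotheses~(\ref{jj1}) and~(\ref{jj2}), the absence of a monochromatic $C_{2n}$ forces Theorem~\ref{tC2n}'s condition $(S3')$ to fail, so $N-n_1-n_2\le 2$ and $N\le 4n-2$. If in addition~(\ref{jf1}) and~(\ref{jm1}) hold, then Theorem~\ref{tCgeq2n} delivers a monochromatic cycle of length at least $2n$; combined with the assumption of no~$C_{2n}$, this yields $C_{\ge 2n+1}\supseteq P_{2n+1}$. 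Otherwise~(\ref{jf1}) or~(\ref{jm1}) fails, and a short case analysis (using $n_1\ge n_2\ge\dots\ge n_s$, $s\ge 3$ and $N\le 4n-2$) confines $G$ to a subgraph of $K_{2n-1,2n-2,1}$, $K_{2n-2,2n-2,2}$ or $K_{2n-2,2n-2,1,1}$, possibly with one or two fewer vertices in $V_1$ or $V_2$.

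For such a residual $G$ I would rerun the regularity reduction of Section~\ref{regularity}. If the reduced graph $G^r$ contains a monochromatic connected matching of size $(1+\gamma)\ell$, the path-stitching construction of Section~\ref{regularity} in fact produces monochromatic paths of any length in a wide interval, because the inner regular pairs realise paths of any correct-parity length between $t-1$ and roughly $(1-3\epsilon)2t-1$; leaving the would-be cycle open and tuning the inner lengths produces a monochromatic path on exactly $2n+1$ vertices. Parity is no obstruction here: even if the relevant reduced component is bipartite (so closed walks through the matching must have even length), an open path is permitted to have odd total length. Otherwise, by Theorem~\ref{t1}, $G$ admits a $(\lambda,i,j)$-bad partition, and I would replay the case analysis of Sections~\ref{1bad}--\ref{k2n2n-1} with target $P_{2n+1}$ instead of $C_{2n}$.

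In each replayed subcase, the analysis builds, via Theorem~\ref{chvatal}, Theorem~\ref{berge} or Theorem~\ref{lasvergnas}, a Hamilton cycle in a balanced bipartite auxiliary graph $H=G_k[X,Y]$ with $|X|=|Y|=n$, which breaks into a monochromatic $P_{2n}$ in $G$. Since $N\ge 3n-1>2n$ there is at least one vertex $z\in V(G)\setminus(X\cup Y)$, and the typicality/good-vertex estimates (analogous to Lemma~\ref{lemma:good-vertices}) let us choose or rotate the endpoints of the Hamilton path of $H$ so that one endpoint shares a $G_k$-edge with $z$, extending the $P_{2n}$ to a $P_{2n+1}$. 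The main obstacle will be the sparsest residual shape, essentially $K_{2n-2,2n-2,1}$, in which only a single vertex $z$ lies outside a near-balanced bipartition and $z$ may have very few monochromatic edges to $X\cup Y$ in the colour we control: ensuring that some rotation of the Hamilton path ends at a $G_k$-neighbour of $z$ then forces a bookkeeping argument with the bad-partition inequalities, closely paralleling the endgame of Lemma~\ref{t2n-12n-32}.
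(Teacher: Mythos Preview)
Your overall strategy---first invoking Theorem~\ref{tCgeq2n}, then rerunning regularity on the residual shapes---is sound in principle and not circular (Theorem~\ref{tCgeq2n} is already proved via Lemma~\ref{t2n-12n-32}). But it is much longer than necessary, and your endgame description does not match what actually happens at the bottleneck.

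The paper's proof is far shorter because it tracks \emph{where} in the proof of Theorem~\ref{tC2n} the extra hypothesis~(\ref{jm2}) is genuinely used. That turns out to be a single spot: Case~2 of the nearly-bipartite subcase (Subsection~\ref{subsection:nearly-bipartite}), where the argument needs at least three vertices in $V_3$ to close a cycle. Since the failure of~(\ref{jm2}) forces $N-n_1-n_2\le 2$ and $n_1,n_2\sim 2n$, every residual $G$ you list lands in exactly this subcase (after the routine reduction to $s=3$). By that point the proof has already produced the four blocks $X_{11},X_{12},X_{21},X_{22}$ of size $\tfrac n2+10$ with the property that each of $G_1[X_{11},X_{21}]$, $G_1[X_{12},X_{22}]$, $G_2[X_{12},X_{21}]$, $G_2[X_{11},X_{22}]$ admits $(v,w)$-paths of any length in $[n-10,n+10]$ of the right parity. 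One then takes a single vertex $x\in V_3$ and looks at its edges to these four blocks: either $x$ has $G_1$-edges into both ``halves'' and bridges two long $G_1$-paths, or all its edges to one half are blue and it bridges two long $G_2$-paths. Either way a monochromatic $P_{2n+1}$ drops out in a few lines. No appeal to Theorem~\ref{tCgeq2n} and no replay of the bad-partition machinery is needed.

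This also exposes the weak point in your plan. Your step~3 says: in each bad-partition subcase build a Hamilton cycle in some $H=G_k[X,Y]$ with $|X|=|Y|=n$, then extend by one vertex $z$. But the subcase where the $C_{2n}$ argument actually fails is precisely the one that does \emph{not} build a single balanced Hamilton cycle; it produces the four-block structure above, in which each block supports only paths of length about $n$, not $2n$. The correct use of the extra vertex there is to \emph{bridge two half-length paths}, not to append one edge to a full-length path. Your ``rotation to hit a $G_k$-neighbour of $z$'' idea is aimed at the wrong configuration, and the difficulty you flag for the sparsest shape $K_{2n-2,2n-2,1}$ simply does not arise once you bridge instead of extend.
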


{\bf Proof.}
By  Theorem~\ref{tC2n}, if~(\ref{jj1}) and~(\ref{jj2}) hold but $G$ does not  have a monochromatic  $C_{2n}$, then~(\ref{jm2})
 fails.
 In particular, $N - n_1 - n_2 \le 2$.  Since $s\geq 3$, $N - n_1 - n_2 \ge 1$.
 We may assume $s=3$: if $s > 3$, then $N - n_1 - n_2 \le 2$ yields $s=4$ and $n_3=n_4=1$. In this case, deleting the edges between $V_3$ and $V_4$ and combining them into one part (of size $2$) only makes the case harder.

We use Condition~(\ref{jm2}) to find a monochromatic $C_{2n}$ only in the nearly-bipartite subcase of Section~\ref{1bad}: in Subsection~\ref{subsection:nearly-bipartite}. Therefore, if there is no monochromatic $C_{2n}$, but~(\ref{jj1}) and~(\ref{jj2}) hold,
 we have a graph $G$ that falls under this subcase.

In this case, we have found disjoint subsets $X_{11}, X_{12} \subseteq V_1$ and $X_{21}, X_{22} \subseteq V_2$ with
 $|X_{11}| = |X_{21}| = |X_{12}| = |X_{22}| = \frac n2 + 10$ satisfying the following property: if $H$ is any of the graphs $G_1[X_{11}, X_{21}]$, $G_1[X_{12}, X_{22}]$, $G_2[X_{12}, X_{21}]$, or $G_2[X_{11}, X_{22}]$, then given any vertices $v, w$ in $H$, we can find a $(v,w)$-path in $H$ on $m$ vertices, provided that $n-10 \le m \le n+10$ and that the parity of $m$ is correct.  

Now let $x \in V_3$ be an arbitrary vertex (since we know that $1 \le n_3 \le 2$). Without loss of generality, we may assume that $x$ has an edge in $G_1$ to $X_{11}$. If $x$ also has an edge in $G_1$ to $X_{12} \cup X_{22}$, then we obtain a long path in $G_1$ as follows:
\begin{itemize}
\item Let $P_1$ be a path in $G_1[X_{11}, X_{21}]$ of length at least $n$ starting from a neighbor of $x$ in $X_{11}$.
\item Let $P_2$ be a path in $G_1[X_{12}, X_{22}]$ of length at least $n$ starting from a neighbor of $x$.
\item Use $x$ to join $P_1$ and $P_2$ into a path.
\end{itemize}
Otherwise, all edges of $x$ to $X_{12} \cup X_{22}$ are in $G_2$; in particular, $x$ has a neighbor in $G_2$ in both $X_{12}$ and $X_{22}$. We obtain a long path in $G_2$ in a similar way:
\begin{itemize}
\item Let $P_1$ be a path in $G_2[X_{12}, X_{21}]$ of length at least $n$ starting from a neighbor of $x$ in $X_{12}$.
\item Let $P_2$ be a path in $G_2[X_{11}, X_{22}]$ of length at least $n$ starting from a neighbor of $x$ in $X_{22}$.
\item Use $x$ to join $P_1$ and $P_2$ into a path.
\end{itemize}
In either case, $G$ contains a monochromatic $P_{2n+1}$.
\qed


\subsection{Completion of the proof of Theorem~\ref{tP2n} }

As observed above, if $G$ has a monochromatic $C_{2n}$, then we are done. Otherwise, by  Theorem~\ref{tC2n} and
Lemma~\ref{NoC2n}, $G$ is bipartite. In this case,~(\ref{jj2}) yields $n_2\geq 2n-1$. Hence $n_1\geq 2n-1$,
and $G\supseteq K_{2n-1,2n-1}$. In this case,
Theorem~\ref{t21} yields the result.\qed

\section{Proof of Theorem~\ref{tP2n+1} on monochromatic $P_{2n+1}$}\label{p2n+1}

\subsection{Setup of the proof}\label{setup3}
For large $n$, we need to prove the theorem for each
  complete $s$-partite graph $G=K_{n_1,\ldots,n_s}$  such that the numbers $n_i$ satisfy $n_1 \ge n_2 \ge \ldots \ge n_s$ and the following three conditions:

\textbf{($T1'$)}  $N = n_1 + \ldots + n_s \ge 3n$;

\textbf{($T2'$)}  $N-n_1 = n_2 + \ldots + n_s \ge 2n-1$;

\textbf{($T3'$)} If $s=2$, then  $n_1\geq 2n+1$.

\medskip
For a given  large $n$, we consider a possible counterexample with the minimum $N+s$. In view of this,
it is enough to consider the lists $(n_1,\ldots,n_s)$ satisfying $(T1')$, $(T2')$ and $(T3')$ such that

(a) for each $1\leq j\leq s$, if $n_i>n_{i+1}$, then the list $(n_1,\ldots,n_{i-1},n_i-1,n_{i+1},\ldots,n_s)$ does not satisfy some of  $(T1')$, $(T2')$ and $(T3')$;

(b) if $s\geq 4$, then the list $(n_1,\ldots,n_{s-2},n_{s-1}+n_s)$ (possibly with the entries rearranged into a non-increasing order) does not satisfy some of $(T1')$, $(T2')$ and $(T3')$.

\medskip
{\bf Case 1:} $s\geq 3$ and  $N>3n$. Then $(T3')$ holds by default. If  $n_1 > n_2$, 
 then the list $(n_1-1,n_2,n_3,\ldots,n_s)$  still satisfies the conditions $(T1')$, $(T2')$ and $(T3')$, a contradiction to (a).
Hence  $n_1=n_2$. Choose the maximum $i$ such that $n_1=n_i$. If $N-n_1>2n-1$, consider 
the list $(n_1,\ldots,n_{i-1},n_i-1,n_{i+1},\ldots,n_s)$. In this case $(T1')$ and $(T2')$ still are satisfied; so by (a), $(T3')$
fails. But this means $s=3$ and $n_1=n_i=1$, so $N\leq 3$, a contradiction. Thus in this case $N-n_1=2n-1$.
Therefore, $n_1= N-(N-n_1)\geq 3n+1-(2n-1)=n+2$ and hence $n_2\geq n+2$, so $N-n_1-n_2\leq (2n-1)-(n+2)=n-3$.
Then the list $(n_1,n_1,N-2n_1)$ satisfies $(T1')$--$(T3')$.
Summarizing, we get
\begin{equation}\label{n-n1=2n-12}
\mbox{\em if $s\geq 3$ and $N>3n$, then}\quad s=3,\;  n_2+n_3 = 2n-1\quad \text{\em  and }\quad n_1 = n_2\geq n+2.
\end{equation}

{\bf Case 2:} $s\geq 3$ and  $N=3n$. Again $(T3')$ holds by default. By $(T2')$, $n_1\leq n+1$, hence $N-n_1-n_2\geq n-2$.
If $s\geq 4$ and $n_{s-1}+n_s\leq n+1$, then let $L$ be the list obtained from $(n_1,\ldots,n_s)$ by replacing the two entries $n_{s-1}$
and $n_s$ with $n_{s-1}+n_s$ and then possibly rearrange the entries into non-increasing order. By construction, $L$ satisfies
 $(T1')$--$(T3')$, a contradiction to (b). Hence $n_{s-1}+n_s\geq n+2$. If $s\geq 6$, then
 $N\geq 3(n_{s-1}+n_s)\geq 3n+6$, contradicting  $N=3n$. Thus 
\begin{equation}\label{N3n-12}
\mbox{\em if $s\geq 3$ and $N=3n$, then}\quad s\leq 5  \quad \text{\em  and if $s\geq 4$, then}\quad n_{s-1}+n_s \geq n+2.
\end{equation}

{\bf Case 3:} $s= 2$. Then by $(T3')$,  $n_1\geq 2n+1$ and by $(T2')$, $n_2\geq 2n-1$. Thus $G\supseteq K_{2n+1,2n-1}$, and we can assume that
\begin{equation}\label{N4n-22}
\mbox{\em if $s= 2$, then}\quad  G= K_{2n+1,2n-1}.
\end{equation}

As we have seen, always $s \le 5$.


\subsection{Completion of the proof}

Suppose $G$ satisfies (\ref{n-n1=2n-12})--(\ref{N4n-22}), and $f$ is a $2$-edge-coloring $G$ such that there is no monochromatic $P_{2n+1}$.

If $G$ has no monochromatic $C_{2n}$, then by Lemma~\ref{NoC2n}, $G$ is bipartite. So by~(\ref{N4n-22}),
$G= K_{2n+1,2n-1}$. But by Lemma~\ref{t2n2n-1},  $K_{2n,2n-1}\mapsto (C_{2n},C_{2n})$. Therefore, below we
assume that
the $2$-edge-coloring $f$ of $G$ is such that $G$ contains a
red cycle $C$ with  ${2n}$ vertices (i.e. $G_1$ contains $C$).

Let $V'=V(C)$ and $V''=V(G)-V'$. Similarly,
for $j=1,\ldots,s$, let $V'_j=V_j\cap C$ and $V''_j=V_j-V'_j$. If some red edge $e$  connects $V'$ with $V''$, then $C+e$ contains a red
$P_{2n+1}$, so below we assume that 
\begin{equation}\label{blu}
\mbox{\em all the edges in $G[V',V'']$ are blue, i.e.,  $G_2[V',V'']=G[V',V'']$.}
\end{equation}

{\bf Case 1:} $s=2$. Then $|V'_1|=|V'_2|=n$. By (\ref{N4n-22}), $|V''_1|=n+1$. By~(\ref{blu}), $G_2[V''_1,V'_2]=K_{n+1,n}$, but $K_{n+1,n}$ contains $P_{2n+1}$.

\medskip
{\bf Case 2:} $s\geq 3$ and $n_1\geq n$. If $V_1\supseteq V''$, then (since $|V''|\geq n$ by~(\ref{N3n-12}))
$$G_2[V'',V(G)-V_1]=G[V'',V(G)-V_1]=K_{n,N-n_1}\supseteq K_{n,2n-1} \supseteq P_{2n+1}.$$
Because $C$ is a cycle of length $2n$ and $V'_1$ is an independent set, $|V'_1|\leq n$. In particular, since $s\geq 3$,
$$\mbox{\em there are distinct $2\leq j_1,j_2\leq s$ such that there are vertices $v_1\in V'_{j_1}$ and $v_2\in V''_{j_2}$.}
$$
If $|V''_1|\geq n$, then $G_2(V''_1,V'-V'_1)$ is a complete bipartite graph with parts of size at least $n$, so it contains a path
$P$ with $2n$ vertices, starting from $v_1$. Adding to it edge $v_1v_2$, we get a blue $P_{2n+1}$.

Suppose now  $|V''_1|\leq n-1$. Then the complete bipartite graph $G_2(V''_1,V'-V'_1)$ has a path $Q_1$  with $2|V''_1|+1$ vertices starting from
$v_1$ and ending in $V'-V_1$. Also since $n_1\geq n$ and $|V''|\geq n$, the complete bipartite graph $G[V'_1,V''-V_1]$ contains
$K_{n-|V''_1|,n-|V''_1|}$ and hence contains a path $Q_2$ with $2(n-|V''_1|)$ vertices starting from $v_2$. Then connecting $Q_1$ with $Q_2$ by the edge $v_1v_2$ we create a $P_{2n+1}$.

\medskip
{\bf Case 3:} $s\geq 3$ and $n_1\leq n-1$. In this case, $N/n_1>3$, so $s\geq4$. Then (\ref{n-n1=2n-12})--(\ref{N4n-22})
imply that $N=3k$ and
$4\leq s\leq 5 $.  
 In particular,
\begin{equation}\label{ni}
N-n_i\geq 3n-(n-1)=2n+1\quad\mbox{\em for every  $1\leq i\leq s$.}
\end{equation}
Relabel $V_i$s so that $|V''_1|\geq \ldots\geq |V''_s|$. Let $s'$ be the largest $i$ such that $V''_i\neq \emptyset$.
We construct a path $Q$ with ${2n+1}$ vertices greedily in two stages.

\medskip
{\bf Stage 1:} For $i=1,\ldots,s'-1$, find a vertex $w_i\in V'-V_i-V_{i+1}$ so that all
$s'-1$ of them are distinct. We can do it, because $V''_i$ and $V''_{i+1}$ are non-empty, so 
$$|V_i'\cup V'_{i+1}|\leq (n_i-1)+(n_{i+1}-1)\leq 2n-4=|V'|-4.$$
At least four choices for each of the $s'-1\leq 4$ vertices $w_i$ allow us to  choose them all distinct.
Then we choose $w_0\in V'-V_1$ and $w_{s'}\in V'-V_{s'}$ so that all $w_0,\ldots,w_{s'}$ are distinct.

\medskip
{\bf Stage 2:} For $i=0,\ldots,s'-1$ we find a $(w_i,w_{i+1})$-path $Q_i$  such that
\\
(i) $V(Q_i)\cap V''=V''_{i+1}$;  and 
(ii) all paths $Q_0,\ldots, Q_{s'-1}$ are internally disjoint.

If we succeed, then $\bigcup_{i=0}^{s'-1}Q_i$ is a path that we are seeking.

Suppose we are constructing $Q_i$ and $V''_{i+1}=\{u_1,\ldots, u_q\}$. We start $Q_i$ by the edge $w_iu_1$.  Then on Step $j$ for $j=1,\ldots,q$, do as follows.

If $j=q$, then add edge $u_qw_{i+1}$ and finish $Q_i$. Otherwise, find a vertex $z_j\in V'-V_{ i+1}$ not yet used in any
$Q_{i'}$, then add to $Q_i$ edges $u_jz_j$ and $z_ju_{j+1}$, and then go to Step $j+1$. We can find this $z_j$ because
by~(\ref{ni}), $|V-V_i|\geq 2n+1$, at most $n-2$ of these vertices are in $V''$, and at most $n$ vertices of all paths $Q_{i'}$ are already chosen
 in $V'$. Since we always can choose $z_j$, our greedy procedure constructs $Q_i$, and all $Q_i$ together form the promised path $Q$.\qed

\bigskip
{\bf Acknowledgment.} We thank Louis DeBiasio for helpful discussions.

\small

\end{document}